\theoremstyle{plain}
\newtheorem{theorem}{Theorem}[section]
\newtheorem{proposition}[theorem]{Proposition}
\newtheorem{lemma}[theorem]{Lemma}
\newtheorem{corollary}[theorem]{Corollary}
\newtheorem{problem}[theorem]{Problem}
\theoremstyle{definition}
\newtheorem{definition}[theorem]{Definition}
\newtheorem{remark}[theorem]{Remark}
\newtheorem{example}[theorem]{Example}
\newtheorem*{Acknow*}{Acknowledgements}
\theoremstyle{remark}
\newcommand{\R}{\mathbb R}
\newcommand{\N}{\mathbb N}
\newcommand{\C}{\mathbb C}
\newcommand{\D}{\mathbb D}
\newcommand{\id}{\operatorname{id}}
\newcommand{\cA}{\mathcal A}
\newcommand{\cB}{\mathscr B}
\newcommand{\cC}{\mathscr C}
\newcommand{\cD}{\mathscr D}
\newcommand{\cE}{\mathscr E}
\newcommand{\cK}{\mathscr K}
\newcommand{\cL}{\mathscr L}
\newcommand{\cP}{\mathscr P}
\newcommand{\cQ}{\mathcal Q}
\newcommand{\cM}{\mathscr M}
\newcommand{\cS}{\mathscr S}
\newcommand{\im}{\operatorname{im}}
\newcommand{\ldss}{\mathscr{L}(s',s)}
\newcommand{\lss}{\mathscr{L}(s)}
\newcommand{\ldsds}{\mathscr{L}(s')}
\newcommand{\lstars}{\mathscr{L}^*(s)}
\newcommand{\proofmainA}{{\raggedright\textbf{Proof of Theorem \ref{th_complemented_subalg_lstars}.}} \ }
\newcommand{\bqed}{\hspace*{\fill} $\Box $\medskip}
\newcommand{\proofmainB}{{\raggedright\textbf{Proof of Theorem \ref{th-complemented-bounded}.}} \ }
\let\epsilon\varepsilon
\let\phi\varphi
\let\rho\varrho
\title[Fr\'echet algebras]{Fr\'echet algebras with a dominating Hilbert algebra norm}
\author{Tomasz Cia\'s}
\date{}
\begin{document}

\begin{abstract}
Let $\mathscr{L}^*(s)$ be the maximal $\mathcal{O}^*$-algebra of unbounded operators on $\ell_2$ whose domain is the space $s$ of rapidly decreasing sequences.  
This is a noncommutative topological algebra with involution which can be identified, for instance, with the algebra $\mathscr L(s)\cap\mathscr L(s')$ or the algebra of multipliers
for the algebra $\mathscr{L}(s',s)$ of smooth compact operators.
We give a simple characterization of unital commutative Fr\'echet ${}^*$-subalgebras of $\mathscr{L}^*(s)$
isomorphic as a Fr\'echet spaces to nuclear power series spaces $\Lambda_\infty(\alpha)$ of infinite type.  
It appears that many natural Fr\'echet ${}^*$-algebras are closed ${}^*$-subalgebras of $\mathscr{L}^*(s)$, for example, the algebras $C^\infty(M)$ of smooth functions on smooth compact manifolds
and the algebra $\mathscr S (\mathbb{R}^n)$ of smooth rapidly decreasing functions on $\mathbb{R}^n$.
\end{abstract}

\maketitle

\footnotetext[1]{{\em 2010 Mathematics Subject Classification.}
Primary: 46J25. Secondary: 46A11, 46A63, 46E25, 46K15, 47L60.

{\em Key words and phrases:} Representations of commutative Fr\'echet algebras with involution, 
topological algebras of unbounded operators, nuclear Fr\'echet algebras of smooth functions, dominating norm, Hilbert algebras

{The research of the author was supported by the National Center of Science, grant no. 2013/10/A/ST1/00091}.}

\section{Introduction}
Let $s$ be the Fr\'echet space of rapidly decreasing complex sequences and let 
\[\cL^*(s):=\{x\colon s\to s: x\textrm{ is linear},s\subset\mathscr{D}(x^*)\textrm{ and }x^*(s)\subset s\},\]
where $\cD(x^*)$ is the domain of the adjoint of an unbounded operator $x$ on $\ell_2$.  
The class $\cL^*(s)$ is known as the maximal $\mathcal{O}^*$-algebra with domain $s$ and it can be seen as the largest ${}^*$-algebra of unbounded operators
on $\ell_2$ with domain $s$ -- for details see the book of Schm\"udgen \cite[Section I.2.1]{Sch}. The ${}^*$-algebra $\lstars$ can be topologised in several natural ways, as is shown in \cite[Sections I.3.3 and I.3.5]{Sch}.  
Here the space $\lstars$ is considered with -- the best from the functional analysis point of view -- locally convex topology $\tau^*$ 
(for definition see Preliminaries and also Proposition \ref{prop-top-Lstar-power-series}). 
Indeed, standard tools of functional analysis, such as closed graph theorem, open mapping theorem or uniform boundedness principle, 
can be applied to $(\lstars,\tau^*)$ (see \cite[Th. 4.5]{CiasPisz}). 
Furthermore, $\lstars$ is a topological ${}^*$-algebra -- i.e. multiplication is separately continous and involution is continous --
but it is neither locally $m$-convex nor a $\cQ$-algebra. 
The algebra $\lstars$ is isomorphic as a topological ${}^*$-algebra, for example, to the algebra $\cL(s)\cap\cL(s')$, 
the algebra of multipliers for the algebra $\ldss$ of smooth compact operators and also to the matrix algebra
\[\Lambda(\cA):=\bigg\{x=(x_{ij})\in\C^{\N^2}:\forall N\in\N\,\exists n\in\N\quad\sum_{i,j\in\N^2}|x_{ij}|\max\bigg\{\frac{i^N}{j^n},\frac{j^N}{i^n}\bigg\}<\infty\bigg\};\]
for details and more information about topological and algebraic properties of $\lstars$ we refer the reader to \cite{CiasPisz}.

The space $s$ carries all the information about nuclear Fr\'echet (even locally convex) 
spaces. Indeed, by the K\=omura-K\=omura theorem, a Fr\'echet space is nuclear if and only 
if it is isomorphic to some closed subspace of $s^\N$ (see \cite[Cor. 29.9]{MeV}). 
What about closed subspaces of the space $s$ itself? In \cite{V4} Vogt proved that a nuclear Fr\'echet space is isomorphic to a closed subspace of $s$ if and only if it has the so-called property (DN).
Moreover, quotients of $s$ were characterised by Vogt and Wagner in \cite{VW} via the so-called property ($\Omega$). Consequently, we have the following characterization: 
a nuclear Fr\'echet space is isomorphic to a complemented subspace of $s$ if and only if it has the properties (DN) and ($\Omega$). It is also well-known that a Fr\'echet space with (DN), ($\Omega$)
and a Schauder basis is isomorphic to a power series space $\Lambda_\infty(\alpha)$ of infinite type. However, it is still an open problem -- a particular case of the famous Mityagin-Pe\l{}czy\'nski problem -- 
whether there is a complemented subspace of $s$ without a basis.

In this paper, we are mainly interested in unital Fr\'echet algebras with involution which are isomorphic as Fr\'echet spaces to nuclear power series spaces of infinite type.
We show that a large class of them -- those algebras $E$ which admit a dominating Hilbert norm $||\cdot||=\sqrt{(\cdot,\cdot)}$ such that 
\begin{equation}\label{eq-xyz=yxstarz}
(xy,z)=(y,x^*z) 
\end{equation}
for all $x,y,z\in E$ -- 
can be embedded into $\lstars$ as closed, even complemented, ${}^*$-subalgebras (see Theorem \ref{th_complemented_subalg_lstars} and Remark \ref{rem-noncommutative}).
In the commutative case we even have the following characterization:
a unital commutative Fr\'echet ${}^*$-algebra isomorphic as a Fr\'echet space to a nuclear power series space $\Lambda_\infty(\alpha)$ of infinite type is isomorphic as a Fr\'echet ${}^*$-algebra to a closed ${}^*$-subalgebra
of $\lstars$ if and only if it admits a dominating Hilbert norm satisfying condition (\ref{eq-xyz=yxstarz}) (see again Theorem \ref{th_complemented_subalg_lstars}).
In Theorem \ref{th-complemented-bounded} we also characterize commutative Fr\'echet unital ${}^*$-subalgebras of $\lstars$ consisting of bounded operators on $\ell_2$ and isomorphic
as Fr\'echet space to nuclear spaces $\Lambda_\infty(\alpha)$. 
It is worth noting that condition (\ref{eq-xyz=yxstarz}) appears in the definiton of Hilbert algebras playing an important role in the theory of von Neumann algebras (see \cite[A.54]{Dix77}).

The above-mentioned results may be seen as a step towards an analogue -- in the context of nuclear power series spaces of infinite type -- 
of the celebrated commutative Gelfand-Naimark theorem. In the separable case it states that there is one to one correspondence (given by isometric ${}^*$-isomorphisms) 
between Banach algebras $C(K)$ of continuous functions on compact Hausdorff metrizable spaces $K$ and
closed unital commutative ${}^*$-subalgebras of the $C^*$-algebra $\cB(\ell_2)$ of bounded operators on $\ell_2$.

Our results are applicable. In the last section we give concrete examples of Fr\'echet ${}^*$-algebras which can be represented in $\lstars$ in the way described above.
Among them there are: the algebras $C^\infty(M)$ of smooth functions on smooth compact manifolds,
the algebras $\cE(K)$  with Schauder basis of smooth Whitney jets on compact sets $K$ with the extension property,
the algebra $\cS(\R^n)$ of smooth rapidly decreasing functions on $\R^n$,
nuclear power series algebras $\Lambda_\infty(\alpha)$ of infinite type
and the noncommutative algebra $\ldss$ of compact smooth operators.  We also provide one counterexample. We show that the unital commutative Fr\'echet ${}^*$-algebra $A^\infty(\D)$ of holomorphic 
functions on the open unit disc $\D$ with smooth boundary values is not isomorphic to any closed ${}^*$-subalgebra of $\lstars$.

\section{Preliminaries}

The canonical $\ell_2$ norm and the corresponding scalar product will be denoted by $||\cdot||_{\ell_2}$ and $\langle\cdot,\cdot\rangle$, respectively.

For locally convex spaces $E$ and $F$, we denote by $\cL(E,F)$ the space of all continuous linear operators from $E$ to $F$ and we set $\cL(E):=\cL(E,E)$.
These spaces will be considered with the topology $\tau_{\cL(E,F)}$ of uniform convergence on bounded sets.

By a \emph{topological ${}^*$-algebra} $E$ we mean a topological vector space endowed with at least separately continuous multiplication and continuous involution
which make $E$ a ${}^*$-algebra.
A \emph{Fr\'echet ${}^*$-algebra} is a topological ${}^*$-algebra whose underlying topological vector space is a Fr\'echet space 
(i.e. metrizable complete locally convex space). We do not require a Fr\'echet ${}^*$-algebra to be locally $m$-convex. 

Let $\alpha=(\alpha_j)_{j\in\N}$ be a monotonically increasing sequence in $(0,\infty)$ such that $\lim_{j\to\infty}\alpha_j=\infty$. Then
\[\Lambda_\infty(\alpha):=\{(\xi_j)_{j\in\N}\subset\C^\N\colon |\xi|_{\alpha,q}^2:=\sum_{j=1}^\infty|\xi_j|^2e^{2q\alpha_j}<\infty\quad\text{for all $q\in\N_0$}\}\]
equipped with the norms $|\cdot|_{\alpha,q}$, $q\in\N_0$, is a Fr\'echet space and it is called a \emph{power series space of infinite type}.
It appears that the space $\Lambda_\infty(\alpha)$ is nuclear if and only if $\sup_{j\in\N}\frac{\log j}{\alpha_j}<\infty$ (see e.g. \cite[Prop. 29.6]{MeV}).
In particular, for the sequence $\alpha_j:=\log j$, $j\in\N$, we obtain the space $s$ of \emph{rapidly decreasing sequences}, i.e.
\begin{equation}\label{eq_s}
s:=\{(\xi_j)_{j\in\N}\in\C^\N\colon|\xi|_q^2:=\sum_{j\in\N}|\xi_j|^2j^{2q}<\infty\quad\textrm{for all }q\in\N_0\}. 
\end{equation} 
By $s_n$ we denote the Hilbert space corresponding to the norm $|\cdot|_n$.

The strong dual of $s$ -- i.e. the space of all continuous linear functionals on $s$ with the topology of uniform convergence on bounded
subsets of $s$ (see e.g. \cite[Def. on p. 267]{MeV}) -- is isomorphic to the space   
\begin{equation}\label{eq_s'}
s':=\{(\xi_j)_{j\in\N}\in\C^\N\colon|\xi|_{-q}^2:=\sum_{j\in\N}|\xi_j|^2j^{-2q}<\infty\quad\textrm{for some }q\in\N_0\} 
\end{equation}
of \emph{slowly increasing sequences} equipped with the inductive limit topology for the sequence $(s_{-n})_{n\in\N_0}$ of Hilbert spaces corresponding to the norms $|\cdot|_{-n}$.
In other words, the locally convex topology on $s'$ is given by the family $\{|\cdot|'_{B}\}_{B\in\cB}$ of seminorms , $|\xi|'_{B}:=\sup_{\eta\in B}|\langle \eta,\xi\rangle|$,
where $\cB$ denotes the class of all bounded subsets of $s$ and, recall, $\langle\cdot,\cdot\rangle$ is the canonical scalar product on $\ell_2$. 

\begin{definition}\label{def-DN-Omega}
A Fr\'echet space $E$ with a fundamental system $(||\cdot||_q)_{q\in\N_0})$ of seminorms 
\begin{enumerate}
 \item has the \emph{property} (DN) (cf. \cite[Def. on p. 359]{MeV}) if there is a continuous norm $||\cdot||$ on $E$ -- called a \emph{dominating norm} -- such that 
for all $q\in\N_0$ there is $r\in\N_0$ and $C>0$ such that 
\[||x||_q^2\leq C||x||\;||x||_r\]
for all $x\in E$;
\item has the \emph{property} ($\Omega$) (cf. \cite[Def. on p. 367]{MeV}) if for all $p\in\N_0$ there is $q\in\N_0$ such that for all $r\in\N$ there are $\theta\in(0,1)$ and $C>0$ with
\[||y||_q^*\leq C{||y||_p^*}^{1-\theta}{||y||_r^*}^{\theta}\]
for all $y\in E'$, where $E'$ is the topological dual of $E$ and $||y||_p^*:=\sup\{|y(x)|:||x||_p\leq 1\}$. 
\end{enumerate}
\end{definition}

The properties (DN) and ($\Omega$) are linear-topological invariants which play a key role in a structure theory of nuclear Fr\'echet spaces.    
The following Theorem is due to Vogt and Wagner. 
\begin{theorem}\emph{(\cite[Ch. 31]{MeV} and \cite{VW, V4})}\label{th-DN}
A Fr\'echet space is isomorphic to:
\begin{enumerate}[\upshape(i)]
 \item a closed subspace of $s$ if and only if it is nuclear and has the property \emph{(DN)};
 \item a quotient of $s$ if and only if it is nuclear and has the property \emph{$(\Omega)$};
 \item a complemented subspace of $s$ if and only if it is nuclear and has the properties \emph{(DN)} and \emph{$(\Omega)$}.
\end{enumerate} 
\end{theorem}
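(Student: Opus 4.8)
The plan is to separate the elementary ``only if'' implications from the deep ``if'' implications, and to obtain (iii) by combining (i), (ii) with a splitting theorem.

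First I would record that $s$ itself satisfies both (DN) and ($\Omega$), by elementary inequalities on the weights $j^q$. For (DN), the Cauchy--Schwarz inequality gives $|\xi|_q^2=\sum_j|\xi_j|^2 j^{2q}\le\big(\sum_j|\xi_j|^2\big)^{1/2}\big(\sum_j|\xi_j|^2 j^{4q}\big)^{1/2}=|\xi|_0\,|\xi|_{2q}$, so $|\cdot|_0$ is a dominating norm. For ($\Omega$), Hölder's inequality on $s'$ applied to the weights $j^{-2q}$ (which represent the dual norms) yields, for $p<q<r$ and $\theta=(q-p)/(r-p)\in(0,1)$, the estimate $|\xi|_{-q}\le|\xi|_{-p}^{\,1-\theta}|\xi|_{-r}^{\,\theta}$, which is precisely ($\Omega$). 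Since nuclearity passes to closed subspaces and to quotients, and recalling that (DN) is inherited by subspaces and ($\Omega$) by quotients, the ``only if'' parts of (i) and (ii) follow at once; and since a complemented subspace is simultaneously (isomorphic to) a closed subspace and a quotient of $s$, the ``only if'' part of (iii) follows from those of (i) and (ii).

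The substance is in the three ``if'' directions. For (i) I would start from the K\=omura--K\=omura embedding $E\hookrightarrow s^{\N}$ available for any nuclear $E$ (quoted in the Introduction); the whole point is that (DN) is exactly the obstruction to collapsing the countable product $s^{\N}$ onto a single copy of $s$. Concretely, I would first replace the given fundamental system by Hilbertian norms $\|\cdot\|_0\le\|\cdot\|_1\le\cdots$ with $\|\cdot\|_0$ a norm, arrange via the (DN)-inequality an interpolation estimate $\|x\|_q^2\le C_q\,\|x\|_0\,\|x\|_{\sigma(q)}$, and use nuclearity to make the linking maps of the associated local Hilbert spaces Hilbert--Schmidt. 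This interpolation forces the singular numbers of the system to grow no faster than the polynomial scale $(j^q)_q$ defining $s$, and from an adapted orthogonal system one assembles a single continuous linear map $E\to s$ that is a topological isomorphism onto a closed subspace. The hard part --- the technical core of Vogt's theorem \cite{V4, MeV} --- will be carrying out this construction in the absence of a Schauder basis on $E$, where the interpolation of Hilbert norms must be used in an essential, coordinate-free way.

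For (ii) I would dualize: ($\Omega$) is the condition on the dual system of $E$ that permits the requisite extensions and liftings, and the construction of a surjection $s\to E$ runs parallel to the embedding in (i); this is the Vogt--Wagner quotient theorem \cite{VW, MeV}. Finally, for (iii), I would feed (i) into a splitting argument. Using (i), realize $E$ as a closed subspace, giving a short exact sequence $0\to E\to s\to s/E\to 0$. The subspace $E$ has ($\Omega$) by hypothesis, while the ($\Omega$)-property of $E$ also ensures, by the duality between (DN) and ($\Omega$) across such a sequence, that the quotient $s/E$ has (DN). Vogt's splitting theorem then applies --- a short exact sequence of nuclear Fréchet spaces whose subspace has ($\Omega$) and whose quotient has (DN) splits, i.e. $\operatorname{Ext}^1(s/E,E)=0$ --- so $E$ is complemented in $s$. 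I expect this last step to be the main obstacle, since the splitting theorem, together with the verification that $s/E$ really carries (DN), rests on the full $\operatorname{Ext}^1$-machinery for Fréchet spaces rather than on elementary estimates \cite{MeV}.
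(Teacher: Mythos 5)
The paper does not prove this theorem: it is quoted from the literature, with the work delegated to \cite[Ch.~31]{MeV} and to \cite{V4,VW}, so there is no internal argument to compare yours against line by line. Your outline reconstructs the architecture of those references correctly. The elementary half is complete as you state it: Cauchy--Schwarz gives $|\xi|_q^2\le|\xi|_0\,|\xi|_{2q}$, so $\|\cdot\|_{\ell_2}$ is a dominating norm on $s$; H\"older's inequality on the dual weights gives ($\Omega$) for $s$; and the inheritance of (DN) by closed subspaces, of ($\Omega$) by quotients, and of nuclearity by both settles every ``only if''. For the converses of (i) and (ii) you are, as you acknowledge, deferring to Vogt's subspace theorem and the Vogt--Wagner quotient theorem (\cite[Prop.~31.5 and 31.6]{MeV}); your sketch points at the right mechanism (interpolation of Hilbertian norms, Hilbert--Schmidt linking maps, no basis available) but is a plan rather than a proof, which is acceptable here since the paper itself treats the whole statement as a citation. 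Your reduction of (iii) to the splitting theorem is likewise the route of \cite[Prop.~31.7]{MeV}.

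The one place where you assert more than you justify is the sentence claiming that, ``by the duality between (DN) and ($\Omega$) across such a sequence,'' the quotient $s/E$ has (DN). This is not a formal duality. That a closed subspace $E\subset s$ with ($\Omega$) forces $s/E$ to have (DN) is a genuine lemma that must be proved (equivalently, one may instead use the representation $0\to\ker q\to s\to E\to 0$ furnished by (ii) and show that $\ker q$ has ($\Omega$)); without one of these two lemmas the splitting theorem --- which requires (DN) on the quotient and ($\Omega$) on the kernel --- simply cannot be invoked, and this is where the residual content of (iii) sits once $\operatorname{Ext}^1=0$ is granted. Since you already concede that this block rests on the machinery of \cite[Ch.~30]{MeV}, I read this as imprecision rather than error, but a careful write-up must isolate and prove (or explicitly cite) that auxiliary lemma rather than dismiss it as duality.
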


We also cite another result of Vogt which will be crucial for our futher considerations.  
\begin{theorem}\label{th-vogt-unitary-iso}\cite[Cor. 7.7]{V3}
Let $E$ be a Fr\'echet space isomorphic to a power series space $\Lambda_\infty(\alpha)$ of infinite type. Then for every dominating Hilbert norm $||\cdot||$ on $E$ there is an isomorphism 
$u\colon E\to\Lambda_\infty(\alpha)$ such that $||u\xi||_{\ell_2}=||\xi||$ for all $\xi\in E$. 
\end{theorem}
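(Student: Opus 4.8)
The plan is to reduce to the model space and then realise the given Hilbert norm as the $\ell_2$-norm after an $\ell_2$-isometric change of coordinates. Fix first any isomorphism $v\colon E\to\Lambda_\infty(\alpha)$ and transport the dominating Hilbert norm, setting $N(\eta):=||v^{-1}\eta||$ for $\eta\in\Lambda_\infty(\alpha)$; this is again a dominating Hilbert norm, now on $\Lambda_\infty(\alpha)$, with associated inner product $(\cdot,\cdot)_N$. Since $|\cdot|_{\alpha,0}$ is exactly the $\ell_2$-norm on $\Lambda_\infty(\alpha)$, it suffices to produce a topological automorphism $w\colon\Lambda_\infty(\alpha)\to\Lambda_\infty(\alpha)$ with $|w\eta|_{\alpha,0}=N(\eta)$ for all $\eta$; then $u:=w\circ v$ satisfies $||u\xi||_{\ell_2}=|w(v\xi)|_{\alpha,0}=N(v\xi)=||\xi||$, as required.

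To build $w$ I would use the canonical Schauder basis $(d_j)_{j\in\N}$ of $\Lambda_\infty(\alpha)$ (the coordinate vectors), which is orthogonal in every Hilbert norm $|\cdot|_{\alpha,q}$, with $|d_j|_{\alpha,q}=e^{q\alpha_j}$ and in particular orthonormal for $|\cdot|_{\alpha,0}$. Its linear span is dense in the completion $H_N$ of $\Lambda_\infty(\alpha)$ with respect to $N$, so applying Gram--Schmidt with respect to $(\cdot,\cdot)_N$ yields an orthonormal basis $(e_j)_{j\in\N}$ of $H_N$ of triangular form $e_j=\sum_{i\le j}c_{ji}d_i$. Define $w$ by $we_j:=d_j$ and extend linearly. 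For finitely supported $\eta$ the identity $|w\eta|_{\alpha,0}^2=N(\eta)^2$ is immediate from the orthonormality of $(e_j)$ for $N$ and of $(d_j)$ for $|\cdot|_{\alpha,0}$, and it extends to all of $\Lambda_\infty(\alpha)$ once $w$ is known to be a topological automorphism; thus $w$ is the desired $\ell_2$-isometry.

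Everything therefore comes down to the continuity of $w$ and $w^{-1}$, i.e.\ to showing that the triangular change of basis $C=(c_{ji})$ and its inverse are continuous on $\Lambda_\infty(\alpha)$, equivalently bounded on each weighted Hilbert space $X_q$ with weights $e^{q\alpha_j}$. Here I would exploit both halves of the dominating-norm hypothesis applied to $N$. Continuity of $N$ gives a fixed $q_0$ and $C_0>0$ with $N(d_j)\le C_0 e^{q_0\alpha_j}$, so the Gram matrix $G_{ij}=(d_i,d_j)_N$ obeys the exponential bound $|G_{ij}|\le C_0^2 e^{q_0(\alpha_i+\alpha_j)}$; the dominating-norm inequality for $N$ (the (DN)-estimate of Definition \ref{def-DN-Omega}, here $|\eta|_{\alpha,q}^2\le C\,N(\eta)\,|\eta|_{\alpha,r}$) gives comparable lower bounds that keep the finite Gram determinants from degenerating. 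The main obstacle is to convert these essentially diagonal estimates into \emph{off-diagonal} decay sharp enough to bound the triangular inverse of $G$ (equivalently the Cholesky factor) simultaneously on the whole exponential scale; this is exactly the point at which the self-similar, infinite-type structure of $\Lambda_\infty(\alpha)$ is indispensable, and I would attack it by an interpolation (Hadamard three-norms) argument across the levels $|\cdot|_{\alpha,p}$, interpolating $N$ between a low and a high level by means of the (DN)-inequality.

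An equivalent and perhaps more structural route is to encode the two inner products by the positive operator $T$ determined by $(\eta,\zeta)_N=\langle T\eta,\zeta\rangle$ and to seek $w$ in the form $w=U\,T^{1/2}$ with $U$ unitary on $\ell_2$; any such $w$ is automatically an $\ell_2$-isometry for $N$, and the task is again to choose $U$ so that $U\,T^{1/2}$ respects the power-series grading. Either way the crux is the same simultaneous-diagonalisation problem: two dominating Hilbert norms on $\Lambda_\infty(\alpha)$ can always be orthogonalised algebraically, but the orthogonalisation must remain a Fr\'echet isomorphism preserving the exponential growth of the norms, and it is the dominating-norm inequalities together with the infinite-type (exponential) scaling that force this to happen.
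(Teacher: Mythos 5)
There is a genuine gap, and it sits exactly where you locate it yourself. Note first that the paper does not prove this statement at all: it is quoted verbatim from Vogt \cite[Cor. 7.7]{V3}, so the entire burden of a ``blind proof'' is to reprove Vogt's theorem. Your reduction to the model space and the Gram--Schmidt set-up are sound: transporting the norm to $\Lambda_\infty(\alpha)$, orthogonalising the coordinate basis $(d_j)$ with respect to $(\cdot,\cdot)_N$ in triangular form, and defining $w e_j:=d_j$ does produce a linear bijection of the finitely supported vectors satisfying $|w\eta|_{\alpha,0}=N(\eta)$. But everything after that is the theorem. The assertion that $w$ and $w^{-1}$ extend to topological automorphisms of $\Lambda_\infty(\alpha)$ is precisely the analytic content of Vogt's result, and your argument for it consists of a description of what would need to be proved (off-diagonal decay of the Cholesky factor of the Gram matrix, uniformly over the exponential scale of norms) together with the remark that you ``would attack it by an interpolation argument.'' That is a plan, not a proof.

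Moreover, the estimates you do extract from the hypotheses are visibly insufficient as stated. Continuity of $N$ gives the upper bound $|G_{ij}|\le C_0^2e^{q_0(\alpha_i+\alpha_j)}$, and the (DN)-inequality for $N$ gives lower bounds on $N$ against the scale $|\cdot|_{\alpha,q}$; both are ``diagonal'' in nature and say nothing direct about the off-diagonal entries $c_{ji}$ of the triangular inverse, which is what controls $\sup_j e^{q\alpha_j-p\alpha_i}|c_{ji}|$ and hence the continuity of $w^{-1}$ on each level. A generic continuous Hilbert norm on $\Lambda_\infty(\alpha)$ certainly does \emph{not} admit such a tame orthogonalisation, so the dominating-norm hypothesis must enter in an essential, quantitative way; identifying \emph{that} mechanism (in Vogt's paper it runs through a developed theory of unitary maps between Hilbert scales attached to power series spaces of infinite type) is the heart of the matter and is absent here. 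Your alternative formulation via $w=UT^{1/2}$ has the same status: it restates the problem (choose the unitary $U$ so that $UT^{1/2}$ respects the grading) without solving it. As it stands the proposal cannot be accepted as a proof of the cited theorem.
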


Let $E$ be a Fr\'echet space with a continuous Hilbert norm $||\cdot||$. Let $H$ be the completion of $E$ in the norm $||\cdot||$ and let $(\cdot,\cdot)$ be the corresponding scalar product.
Then we define
\[\cL^*(E,||\cdot||):=\{x\colon E\to E: x\textrm{ is linear},E\subset\mathscr{D}(x^*)\textrm{ and }x^*(E)\subset E\},\]
where
\[\cD(x^*):=\{\eta\in H:\;\exists\zeta\in H\;\forall\xi\in E\quad(x\xi,\eta)=(\xi,\zeta)\}\]
and $x^*\eta:=\zeta$ for $\eta\in\cD(x^*)$.
In the case when $E$ is a closed subspace of $s$ or $E=\Lambda_\infty(\alpha)$ we write $\cL^*(E)$ instead of $\cL^*(E,||\cdot||_{\ell_2})$.
Since $E$ is a dense linear subspace of $H$, each $x\in\cL^*(E,||\cdot||)$ can be considered as a dense unbounded operator in $H$ with domain $\cD(x)=E$, and thus it has the adjoint $x^*\colon \cD(x^*)\to H$.
By definition, the operator ${x^*}_{\mid{E}}$, for simplicity denoted again by $x^*$, is in $\cL^*(E,||\cdot||)$, as well. Moreover, by definition, 
\[\cD(xy):=\{\xi\in\cD(y):\;y\xi\in\cD(x)\}=E\]
for all $x,y\in\cL^*(E,||\cdot||)$. This shows that $\cL^*(E,||\cdot||)$ is a ${}^*$-algebra. 
In fact, the class $\cL^*(E,||\cdot||)$ can be seen as the largest ${}^*$-algebra of unbounded operators on $H$ with domain $E$ and it is known as the \emph{maximal $\mathcal{O}^*$-algebra} with domain $E$
(see \cite[2.1]{Sch} for details). 

In the theory of maximal $\mathcal{O}^*$-algebras -- and, more generally, of algebras of unbounded operators in Hilbert spaces -- one consider the so-called \emph{graph topology} (\cite[Def. 2.1.1]{Sch}). 
With $E$ and $||\cdot||$ as above, the graph topology of $\cL^*(E,||\cdot||)$ on $E$ is, by definition, given by the system of seminorms 
$(||\cdot||_a)_{a\in\cL^*(E,||\cdot||)}$, $||\xi||_a:=||a\xi||$ for $\xi\in E$. 

The following easy observation is kind of folklor -- for completness we present here the proof. 
\begin{proposition}\label{prop-graph-topology}
Let $E$ be a Fr\'echet space with a continuous Hilbert norm $||\cdot||$. 
Then the graph topology of $\cL^*(E,||\cdot||)$ on $E$ is weaker than the Fr\'echet space toplogy.
\end{proposition}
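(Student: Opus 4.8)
The plan is to unwind what the conclusion asks for. The graph topology on $E$ is the locally convex topology generated by the seminorms $||\cdot||_a$, $a\in\cL^*(E,||\cdot||)$, where $||\xi||_a=||a\xi||$. To say this topology is weaker than the Fr\'echet topology of $E$ is precisely to say that each $||\cdot||_a$ is continuous for the Fr\'echet topology. Since the Hilbert norm $||\cdot||$ is continuous on $E$ by hypothesis, and $||\xi||_a=||a\xi||$ is the composition of $a$ with $||\cdot||$, it therefore suffices to prove that every $a\in\cL^*(E,||\cdot||)$ acts as a \emph{continuous} linear operator $a\colon E\to E$.

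To establish this continuity I would invoke the closed graph theorem, which applies because $E$ is a Fr\'echet space and $a\colon E\to E$ is linear. The task thus reduces to checking that the graph of $a$ is closed in $E\times E$, i.e.\ that whenever $\xi_n\to\xi$ and $a\xi_n\to\eta$ in $E$ one has $\eta=a\xi$. The verification uses two ingredients. First, because $||\cdot||$ is continuous on $E$, the inclusion $E\hookrightarrow H$ is continuous, so both convergences also hold in the Hilbert space $H$; hence $\xi_n\to\xi$ and $a\xi_n\to\eta$ in the norm $||\cdot||$. Second, I exploit the defining property of $\cL^*(E,||\cdot||)$, namely $E\subset\cD(a^*)$ and $a^*(E)\subset E$: for every $\zeta\in E$ the adjoint relation gives $(a\xi_n,\zeta)=(\xi_n,a^*\zeta)$. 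Passing to the limit and using the continuity of the inner product on $H$ yields $(\eta,\zeta)=(\xi,a^*\zeta)=(a\xi,\zeta)$, where the last equality is again the adjoint relation applied to $\xi,\zeta\in E$. Thus $(\eta-a\xi,\zeta)=0$ for all $\zeta\in E$, and since $E$ is dense in $H$ this forces $\eta=a\xi$, so the graph is closed.

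I do not expect a genuine obstacle here; the statement is essentially a formal consequence of the definition of $\cL^*(E,||\cdot||)$ together with the closed graph theorem. The only point requiring care is the bookkeeping between the Fr\'echet topology of $E$ and the Hilbert-space topology of $H$: one must use the continuity of $||\cdot||$ to transport the two convergences into $H$ before taking inner products, and the density of $E$ in $H$ to conclude equality of vectors from equality of all inner products. Once $a\colon E\to E$ is known to be continuous, continuity of $\xi\mapsto||a\xi||$ is immediate, each seminorm $||\cdot||_a$ is Fr\'echet-continuous, and the graph topology is therefore weaker than the Fr\'echet topology, as claimed.
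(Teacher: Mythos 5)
Your proof is correct and follows essentially the same route as the paper: both reduce the claim to continuity of each $a\in\cL^*(E,||\cdot||)$ and verify closedness of the graph via the adjoint identity $(a\xi_n,\zeta)=(\xi_n,a^*\zeta)$ together with density of $E$ in $H$. The only cosmetic difference is that you apply the closed graph theorem to $a\colon E\to E$ (Fr\'echet topology on both sides) and then compose with the continuous norm, whereas the paper applies it directly to $a\colon E\to H$; both versions are valid and yield the same conclusion.
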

\begin{proof}
Let $(\cdot,\cdot)$ denote the scalar product corresponding to the Hilbert norm $||\cdot||$ and let $H$ be the completion of $E$ in the norm $||\cdot||$.
We shall show that each $a\in\cL^*(E,||\cdot||)$ is a continuous map from the Fr\'echet space $E$ to the Hilbert space $H$.
Let $(\xi_j)_{j\in\N}\subset E$ be a sequence converging in the Fr\'echet space topology to 0 and assume that $a\xi_j$ converges in the norm 
$||\cdot||$ to some $\eta\in H$.
We have, for all $\zeta\in E$, 
\[\lim_{j\to\infty}(a\xi_j,\zeta)=(\eta,\zeta)\]
and, on the other hand,
\[\lim_{j\to\infty}(a\xi_j,\zeta)=\lim_{j\to\infty}(\xi_j,a^*\zeta)=0.\]
Hence, $(\eta,\zeta)=0$ for all $\zeta\in E$, and thus $\eta=0$. Consequently, by the closed graph theorem for Fr\'echet spaces 
(cf. \cite[Th. 24.31]{MeV}), the map $a\colon E\to H$ is continuous, which is the desired conclusion.   
\end{proof}

Sometimes the initial Fr\'echet space topology and the graph topology coincide.
\begin{proposition}\label{prop-top-power-series}
Let $E$ be a Fr\'echet space isomorphic to a power series space $\Lambda_\infty(\alpha)$ of infinite type and let $||\cdot||$ be a dominating Hilbert norm on $E$. 
Then the graph topology of $\cL^*(E,||\cdot||)$ on $E$ coincides with the Fr\'echet space topology.
\end{proposition}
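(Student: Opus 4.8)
The plan is to establish that the Fréchet space topology is weaker than the graph topology; since Proposition \ref{prop-graph-topology} already gives the reverse inclusion, this yields the asserted coincidence. Concretely, I would produce, for each canonical seminorm of the power series space, a single operator $a\in\cL^*(E,||\cdot||)$ whose graph seminorm $||\cdot||_a$ dominates it — in fact realises it exactly.

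The first step is to normalise the Hilbert norm. Since $E$ is isomorphic to $\Lambda_\infty(\alpha)$ and $||\cdot||$ is a \emph{dominating} Hilbert norm, Theorem \ref{th-vogt-unitary-iso} furnishes an isomorphism $u\colon E\to\Lambda_\infty(\alpha)$ with $||u\xi||_{\ell_2}=||\xi||$ for all $\xi\in E$. This $u$ extends to a unitary $U$ from the completion $H$ of $(E,||\cdot||)$ onto $\ell_2$, and conjugation $a\mapsto UaU^{-1}$ carries $\cL^*(E,||\cdot||)$ bijectively onto $\cL^*(\Lambda_\infty(\alpha))$; here one uses $U^*=U^{-1}$ to check that adjoints and the domain conditions are preserved. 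Because $U$ is isometric on $H$, the graph seminorm $||\cdot||_a$ on $E$ is carried to the graph seminorm $||\cdot||_{UaU^{-1}}$ on $\Lambda_\infty(\alpha)$, while $u$ itself is a Fréchet isomorphism. Hence it suffices to prove the claim for $E=\Lambda_\infty(\alpha)$ equipped with $||\cdot||=||\cdot||_{\ell_2}=|\cdot|_{\alpha,0}$.

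The second step is the explicit construction. For $q\in\N_0$ I would consider the diagonal operator $D_q$ given by $D_q\xi:=(e^{q\alpha_j}\xi_j)_j$. A direct computation shows $|D_q\xi|_{\alpha,p}=|\xi|_{\alpha,p+q}$ for every $p$, so $D_q$ maps $\Lambda_\infty(\alpha)$ continuously into itself; as $D_q$ has real diagonal entries it is formally self-adjoint, so $D_q^*=D_q$ likewise preserves $\Lambda_\infty(\alpha)$, and therefore $D_q\in\cL^*(\Lambda_\infty(\alpha))$. Moreover $||\xi||_{D_q}=||D_q\xi||_{\ell_2}=|\xi|_{\alpha,q}$, so the $q$-th canonical seminorm of $\Lambda_\infty(\alpha)$ is literally a graph seminorm. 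Since these seminorms form a fundamental system, the Fréchet topology is weaker than the graph topology, and combining this with Proposition \ref{prop-graph-topology} completes the proof.

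The routine verifications — the identity $|D_q\xi|_{\alpha,p}=|\xi|_{\alpha,p+q}$ and the fact that conjugation by $U$ respects the $\cL^*$-structure and the graph seminorms — are straightforward. The one genuinely substantive ingredient is the reduction carried out in the first step: it is precisely the \emph{dominating} character of $||\cdot||$, via Vogt's Theorem \ref{th-vogt-unitary-iso}, that lets us align the Hilbert norm with the base norm $|\cdot|_{\alpha,0}$ of the power series space. Without this alignment the diagonal operators $D_q$ need not belong to $\cL^*(E,||\cdot||)$, and the argument would break down.
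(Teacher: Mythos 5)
Your argument is correct and is essentially the paper's own proof: both invoke Vogt's Theorem \ref{th-vogt-unitary-iso} to align the dominating Hilbert norm with $|\cdot|_{\alpha,0}$, realise the canonical seminorms as graph seminorms of the diagonal operators $\xi\mapsto(e^{q\alpha_j}\xi_j)_j$ (the paper conjugates them back to $E$ as $a_n=u^{-1}d_nu$ rather than transporting the whole problem to $\Lambda_\infty(\alpha)$, but the computation is the same), and conclude via Proposition \ref{prop-graph-topology}. No gaps.
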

\begin{proof}
Let $(\cdot,\cdot)$ denote the scalar product corresponding to the Hilbert norm $||\cdot||$.
By \cite[Cor. 7.7]{V3}, there is an isomorphism $u\colon E\to\Lambda_\infty(\alpha)$ such that $||u \xi||_{\ell_2}=||\xi||$ for all $\xi\in E$.
Let $||\xi||_n:=|u \xi|_{\alpha,n}$ for $\xi\in E$ and $n\in\N$. Then $(||\cdot||_n)_{n\in\N}$ is a fundamental sequence of dominating Hilbert norms on $E$.
For $n\in\N$, we define the diagonal map $d_n\colon\Lambda_\infty(\alpha)\to\Lambda_\infty(\alpha)$, $d_n\xi:=(e^{n\alpha_j}\xi_j)_{j\in\N}$.   
Clearly, each $d_n$ is an automorphism of the Fr\'echet space $\Lambda_\infty(\alpha)$ and $||d_n\xi||_{\ell_2}=|\xi|_{\alpha,n}$ for all
$\xi\in\Lambda_\infty(\alpha)$. Now, for $n\in\N$, let $a_n\colon E\to E$, $a_n:=u^{-1}d_nu$. We have
\[(a_n\xi,\zeta)=(u^{-1}d_nu\xi,\zeta)=\langle d_nu\xi,u\zeta\rangle=\langle u \xi,d_nu \zeta\rangle=(\xi,u ^{-1}d_nu\zeta)=(\xi,a_n\zeta)\]
for all $\xi,\zeta\in E$, whence $a_n\in \cL^*(E,||\cdot||)$. Consequently, since $||\xi||_n=||a_n\xi||$ for all $\xi\in E$, i.e. 
$||\cdot||_n=||\cdot||_{a_n}$, the graph topology of $\cL^*(E,||\cdot||)$ on $E$ is finer than the Fr\'echet space toppology
and thus, in view of Proposition \ref{prop-graph-topology}, these topologies are equal. 
\end{proof}

There are plenty natural topologies on the space $\cL^*(E,||\cdot||)$ (see \cite[Sect. 3.3, 3.5]{Sch}).  
Here we are interested in the locally convex topology $\tau^*$ on $\cL^*(E,||\cdot||)$ given by the seminorms  
\[p^{a,B}(x):=\max\Big\{\sup_{\xi\in B}||ax\xi||,\,\sup_{\xi\in B}||ax^*\xi||\Big\},\]
where $a$ and $B$ run over $\cL^*(E,||\cdot||)$ and the class of all bounded subsets of $E$ equipped with the graph topology of $\cL^*(E,||\cdot||)$, 
respectively (see \cite[pp. 81--82]{Sch}). 
It is well-known that $\cL^*(E,||\cdot||)$ endowed with the topology $\tau^*$ is a topological ${}^*$-algebra 
(cf. \cite[Prop. 3.3.15 (i)]{Sch}). If we, moreover, assume that $E$ is isomorphic to a power series space of infinite type and $||\cdot||$ is a dominating Hilbert norm on $E$,
then, by Proposition \ref{prop-top-power-series} and \cite[Prop. 3.3.15 (iv)]{Sch}, $(\cL^*(E,||\cdot||),\tau^*)$ is complete. 
The following characterization of the topology $\tau^*$ is a direct consequence of Proposition \ref{prop-top-power-series}.

\begin{proposition}\label{prop-top-Lstar-power-series}
Let $E$ be a Fr\'echet space isomorphic to a power series space of infinite type and let $||\cdot||$ be a dominating Hilbert norm on $E$.
Let $(||\cdot||_n)_{n\in\N}$ be a fundamental sequence of norms on $E$.
Then the topology $\tau^*$ on $\cL^*(E,||\cdot||)$ is given by the seminorms $(p_{n,B})_{n\in\N,B\in\cB_E}$,
\begin{equation}\label{pnB}
p_{n,B}(x):=\max\{\sup_{\xi\in B}||x\xi||_n,\sup_{\xi\in B}||x^*\xi||_n\},
\end{equation}
where $\cB_E$ denote the class of all bounded subsets of $E$.
\end{proposition}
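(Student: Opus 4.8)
The plan is to prove that the two families of seminorms $(p^{a,B})_{a\in\cL^*(E,||\cdot||),\,B}$ and $(p_{n,B})_{n\in\N,\,B\in\cB_E}$ are mutually dominated, so that they generate the same locally convex topology; write $\tau'$ for the topology generated by the $p_{n,B}$, so the goal is $\tau^*=\tau'$. The first thing to record is that, by Proposition \ref{prop-top-power-series}, the graph topology of $\cL^*(E,||\cdot||)$ on $E$ coincides with the Fr\'echet space topology. Consequently the bounded sets $B$ occurring in the definition of $\tau^*$ are precisely the members of $\cB_E$, so both families are indexed by the same class of bounded sets and it suffices, for each fixed $B\in\cB_E$, to compare $p^{a,B}$ with the $p_{n,B}$.

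For the inclusion $\tau^*\subseteq\tau'$ I would fix $a\in\cL^*(E,||\cdot||)$ and $B\in\cB_E$. By Proposition \ref{prop-graph-topology} the linear map $a\colon E\to H$ is continuous, where $H$ is the completion of $E$ in $||\cdot||$. Since $E$ is a Fr\'echet space and $H$ is normed, continuity yields $m\in\N$ and $C>0$ with $||a\zeta||\leq C||\zeta||_m$ for all $\zeta\in E$. Applying this estimate with $\zeta=x\xi$ and with $\zeta=x^*\xi$ for $\xi\in B$ gives $p^{a,B}(x)\leq C\,p_{m,B}(x)$ for every $x\in\cL^*(E,||\cdot||)$. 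Hence each generating seminorm of $\tau^*$ is $\tau'$-continuous, so $\tau^*\subseteq\tau'$.

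For the reverse inclusion $\tau'\subseteq\tau^*$ I would use the operators produced in the proof of Proposition \ref{prop-top-power-series}. First note that $\tau'$ does not depend on the chosen fundamental sequence of norms: if $(||\cdot||_n)$ and $(|||\cdot|||_n)$ are two such sequences, then the estimate $||\cdot||_n\leq C|||\cdot|||_m$ immediately forces the corresponding seminorms to satisfy $p_{n,B}\leq C\,p'_{m,B}$, and symmetrically. I may therefore take the dominating Hilbert norms $||\xi||_n:=|u\xi|_{\alpha,n}$ together with the operators $a_n=u^{-1}d_nu\in\cL^*(E,||\cdot||)$ from that proof, which satisfy $||a_n\xi||=||\xi||_n$ for all $\xi\in E$. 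Then for every $B\in\cB_E$ and every $x$,
\[p_{n,B}(x)=\max\Big\{\sup_{\xi\in B}||a_nx\xi||,\,\sup_{\xi\in B}||a_nx^*\xi||\Big\}=p^{a_n,B}(x),\]
so each $p_{n,B}$ is literally one of the $\tau^*$-seminorms and is therefore $\tau^*$-continuous. This gives $\tau'\subseteq\tau^*$, and combining the two inclusions proves $\tau^*=\tau'$.

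The argument is in essence a repackaging of the two preceding propositions, so I do not anticipate a serious obstacle. The one point requiring care is the passage in the second paragraph from the mere continuity of $a\colon E\to H$ to a quantitative bound $||a\zeta||\leq C||\zeta||_m$ against a \emph{single} norm of the fundamental sequence; this is exactly where the metrizability of $E$ is used. The other technical observation worth isolating is that the auxiliary operators $a_n$ of Proposition \ref{prop-top-power-series} realize the norms $||\cdot||_n$ as the graph seminorms $||\cdot||_{a_n}$, which is what makes the identity $p_{n,B}=p^{a_n,B}$ hold on the nose.
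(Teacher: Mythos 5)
Your argument is correct and is exactly the fleshing-out of the paper's one-line justification (the paper simply declares the proposition ``a direct consequence of Proposition \ref{prop-top-power-series}'' without writing out a proof): you use Proposition \ref{prop-graph-topology} to dominate each $p^{a,B}$ by some $p_{m,B}$, and the operators $a_n$ with $\|\cdot\|_{a_n}=\|\cdot\|_n$ from the proof of Proposition \ref{prop-top-power-series} to realize each $p_{n,B}$ as a $\tau^*$-seminorm, after noting that the graph-topology-bounded sets coincide with $\cB_E$. No gaps; the two points you isolate at the end (the quantitative continuity estimate and the independence of $\tau'$ from the chosen fundamental system) are precisely the right ones.
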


\section{Fr\'echet subalgebras of $\lstars$}

In this section we give abstract descriptions of two large classes of complemented commutative Fr\'echet ${}^*$-subalgebras of $\lstars$ 
(Theorems \ref{th_complemented_subalg_lstars} and \ref{th-complemented-bounded}).  
Moreover, we provide a criterion for the existence of a ``nice'' embedding in $\lstars$ of not necessarily commutative Fr\'echet ${}^*$-algebras (see Remark \ref{rem-noncommutative}).

Let us first recall the notion of Hilbert algebras.
\begin{definition}\label{def-Hilbert-algebra}(cf. \cite[A.54]{Dix77})
A \emph{Hilbert algebra} is a ${}^*$-algebra $E$ endowed with a Hilbert norm $||\cdot||:=\sqrt{(\cdot,\cdot)}$ such that:
\begin{enumerate}
 \item[\upshape($\alpha$)] $(xy,z)=(y,x^*z)$ for all $x,y,z\in E$;
 \item[\upshape($\beta$)] for all $x\in E$ there is $C>0$ such that $||xy||\leq C||y||$ for all $y\in E$, i.e. the left multiplication maps 
 $m_x\colon (E,||\cdot||)\to (E,||\cdot||)$, $m_x(y):=xy$, are bounded;
 \item[\upshape($\gamma$)] $(y^*,x^*)=(x,y)$ for all $x,y\in E$;
 \item[\upshape($\delta$)] the linear span of the set $E^2:=\{ab:\; a,b\in E\}$ is dense in $E$. 
\end{enumerate}
Each norm $||\cdot||$ satisfying conditions ($\alpha$)--($\delta$) is called a \emph{Hilbert algebra norm}.
\end{definition}

\begin{remark}\label{rem-def-Hilbert-alg}
If $E$ is unital, then condition ($\delta$) in the above definition is trivially satisfied. If, moreover, 
$E$ is commutative, then ($\alpha$) implies ($\gamma$). Hence, every Hilbert norm on a unital commutative ${}^*$-algebra satisfying condition ($\alpha$) and ($\beta$) is already a Hilbert algebra norm.  
\end{remark}

\begin{definition}
A Fr\'echet ${}^*$-algebra is called a DN-\emph{algebra} if it admits a Hilbert dominating norm satysfying condition ($\alpha$) in Definition \ref{def-Hilbert-algebra}.
A DN-algebra is called a $\beta\mathrm{DN}$-\emph{algebra} if the corresponding Hilbert dominating norm satisfies conditions ($\alpha$) and ($\beta$) simultaneously.
\end{definition}

\begin{remark}
In \cite[Def. 1.5]{MP14} M. M\u antoiu and R. Purice defined a Fr\'echet-Hilbert algebra as a Fr\'echet ${}^*$-algebra 
admitting a continous Hilbert algebra norm (more precisely, in their defintion the corresponding Hilbert algebra scalar product is predetermined).
Hence, in view of Remark \ref{rem-def-Hilbert-alg}, every unital commutative $\beta$DN-algebra is a Fr\'echet-Hilbert algebra.
\end{remark}

Our main results read as follows.

\begin{theorem}\label{th_complemented_subalg_lstars}
Let $E$ be a unital commutative Fr\'echet ${}^*$-algebra isomorphic as a Fr\'echet space to a nuclear power series space of infinite type. Then the following statements are equivalent.
\begin{enumerate}[\upshape(i)]
 \item $E$ is isomorphic as a Fr\'echet ${}^*$-algebra to a complemented ${}^*$-subalgebra of $\lstars$.
 \item $E$ is isomorphic as a Fr\'echet ${}^*$-algebra to a closed ${}^*$-subalgebra of $\lstars$.
 \item $E$ is a $\mathrm{DN}$-algebra. 
\end{enumerate}
\end{theorem}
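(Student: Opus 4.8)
The plan is to prove the cycle (iii)$\Rightarrow$(i)$\Rightarrow$(ii)$\Rightarrow$(iii). The implication (i)$\Rightarrow$(ii) is immediate, since a complemented subspace is closed and the complementing projection does not interfere with the algebraic structure. The substance is therefore in the construction (iii)$\Rightarrow$(i) and in the converse (ii)$\Rightarrow$(iii). The two main tools will be the left regular representation of $E$ together with condition ($\alpha$), and Vogt's unitary isomorphism theorem (Theorem \ref{th-vogt-unitary-iso}); the topology $\tau^*$ will always be handled through the concrete seminorms $p_{n,B}$ of Proposition \ref{prop-top-Lstar-power-series}. The one genuine subtlety to keep in mind from the outset is that the target is $\lstars$ itself (operators with domain $s$), not $\cL^*(\Lambda_\infty(\alpha))$; these need not coincide when $\Lambda_\infty(\alpha)\not\cong s$, so the representation must be forced onto the fixed space $s$ by hand.

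For (iii)$\Rightarrow$(i), let $\|\cdot\|=\sqrt{(\cdot,\cdot)}$ be a dominating Hilbert norm on $E$ satisfying ($\alpha$). First I would form the left regular representation $L_x(y):=xy$. Condition ($\alpha$) says exactly that $(L_xy,z)=(y,L_{x^*}z)$, so $L_x$ is a densely defined operator on the completion of $(E,\|\cdot\|)$ with $L_x^{*}=L_{x^*}$ on $E$; using that $E$ is a unital Fréchet algebra (hence multiplication is separately, and by barrelledness jointly, continuous) one checks $L$ is an injective continuous unital $*$-homomorphism. The key device for landing in $\lstars$ is a \emph{sparse} realization: choose strictly increasing $k_j\in\N$ with $\log k_j$ comparable to $\alpha_j$, and let $S:=\{\xi\in s:\xi_k=0\text{ for }k\notin\{k_j\}_j\}$. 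Then $|\xi|_n^2=\sum_j|\xi_{k_j}|^2k_j^{2n}$ shows $S\cong\Lambda_\infty(\alpha)\cong E$ as Fr\'echet spaces, while the coordinate projection $P\colon s\to S$ is simultaneously continuous $s\to s$ and $\ell_2$-orthogonal -- this double role of $P$ is what will make both membership in $\lstars$ and the involution work. Applying Theorem \ref{th-vogt-unitary-iso} to $(E,\|\cdot\|)$ and to $(S,|\cdot|_0)$ and composing, I obtain a Fr\'echet isomorphism $\Psi\colon E\to S$ with $|\Psi\xi|_0=\|\xi\|$, extending to a unitary onto $\overline S\subseteq\ell_2$. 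I then set $T_x:=\Psi L_x\Psi^{-1}P$. Because $P$ is orthogonal and $\Psi$ isometric, a short computation gives $T_xT_y=T_{xy}$ and $T_x^{*}=T_{x^{*}}$, so $T$ is a unital $*$-homomorphism into $\lstars$, and the seminorm estimates of Proposition \ref{prop-top-Lstar-power-series} (via joint continuity of multiplication) show $T$ is continuous. Finally, since $P\Psi(1)=\Psi(1)$ one has the crucial identity $T_x\Psi(1)=\Psi(x)$; hence $R(y):=\Psi^{-1}(P(y\Psi(1)))$ is a continuous linear map $\lstars\to E$ with $RT=\id_E$, so $T$ is a topological $*$-isomorphism onto its range and $TR$ is a continuous projection of $\lstars$ onto $T(E)$, which is therefore a complemented closed $*$-subalgebra.

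For the converse (ii)$\Rightarrow$(iii), suppose $E$ is identified with a closed commutative unital $*$-subalgebra $A\subseteq\lstars$. The natural candidate dominating norm is $\|x\|:=|x\xi_0|_{0}$ (the $\ell_2$-norm of $x\xi_0$) for a suitable $\xi_0\in s$. Here two of the three requirements come for free: condition ($\alpha$) holds automatically because $(xy,z)=\langle xy\xi_0,z\xi_0\rangle=\langle y\xi_0,x^{*}z\xi_0\rangle=(y,x^{*}z)$ by the defining adjoint relation in $\lstars$ (note $z\xi_0\in s\subseteq\cD(x^{*})$), and continuity is clear since $\|x\|\le p_{0,\{\xi_0\}}(x)$. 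Moreover, the Cauchy--Schwarz inequality $|\eta|_n^2\le|\eta|_0|\eta|_{2n}$ in $s$, applied to $\eta=x\xi_0$, reduces the dominating property to the single \emph{cyclicity} estimate $\sup_{\eta\in B}|x\eta|_0\le C_B|x\xi_0|_0$ for every bounded $B\subseteq s$ (and its analogue for $x^{*}$). Thus everything hinges on choosing $\xi_0$ so that $x\mapsto x\xi_0$ is a topological embedding of $A$ into $s$; after passing to the reduced space via the unit of $A$, which is a symmetric idempotent and hence an $\ell_2$-orthogonal projection $e$ with $e(s)\subseteq s$, one seeks such a cyclic (in particular separating, giving definiteness) vector in $e(s)$.

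The main obstacle, on both sides, is this interplay between the algebraic demand of the involution and the rigidity of the fixed domain $s$. In the forward direction it is overcome by the sparse coordinate subspace $S$, whose coordinate projection is orthogonal \emph{and} $s$-continuous, so that the isometry furnished by Theorem \ref{th-vogt-unitary-iso} transports the left regular representation into $\lstars$ with the involution preserved; I expect the verification that the matrices of the $T_x$ actually map $s$ into $s$ (equivalently, that $T_x^{*}=T_{x^{*}}$ again lies in $\lstars$) to be the most delicate bookkeeping. In the converse direction the crux is the existence of the cyclic vector $\xi_0$ realizing $A$ faithfully and topologically inside $s$; this is exactly the property made manifest by the identity $T_x\Psi(1)=\Psi(x)$ in the construction above, and I would establish it in general from the nuclearity and power series structure of $A$ together with the reduction by the unit projection $e$.
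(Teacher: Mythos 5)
Your implication (iii)$\Rightarrow$(i) is essentially sound and is a recognizable variant of the paper's argument: the paper also embeds $E$ via the left regular representation composed with a Vogt isometry onto an orthogonally complemented subspace $G$ of $s$, and also builds the complementing projection from the unit (the paper's $Q\phi:=m_{\phi(\mathbf 1)}$ is your $TR$ in disguise). The one genuine difference is that you manufacture $G$ explicitly as a sparse coordinate subspace $S$ supported on indices $k_j$ with $\log k_j\asymp\alpha_j$ (possible by nuclearity), whereas the paper invokes Vogt's theorem on unitarizing projections in $s$ to turn an arbitrary complemented copy of $\Lambda_\infty(\alpha)$ into an orthogonally complemented one. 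Your route is more concrete and avoids that second Vogt theorem; it buys nothing extra but is a legitimate simplification, provided you actually carry out the (routine but necessary) verifications that $T_x\in\lstars$ with $(T_x)^*|_s=T_{x^*}$ and that $T$ is $\tau^*$-continuous.

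The gap is in (ii)$\Rightarrow$(iii). You reduce the dominating property of $\|x\|:=\|x\xi_0\|_{\ell_2}$ to the cyclicity estimate $\sup_{\eta\in B}\|x\eta\|_{\ell_2}\le C_B\|x\xi_0\|_{\ell_2}$ for every bounded $B\subset s$, and then defer the existence of such a $\xi_0$ to ``the nuclearity and power series structure of $A$ together with the reduction by the unit projection.'' That deferral is exactly the hard point, and nothing you have written produces $\xi_0$; even the definiteness of your candidate norm (i.e.\ that $\xi_0$ is separating for $A$) is unproved, and the quantitative estimate is much stronger than separation, since $x\xi_0=\sum_j(\xi_0)_j\,xe_j$ can suffer cancellation. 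The paper circumvents the single-vector problem entirely: instead of $\|x\xi_0\|_{\ell_2}$ it takes the weighted average $[x]_m^2:=\sum_{j}\|xe_j\|_{\ell_2}^2\,j^{-2m-2}$ over \emph{all} unit vectors, where $m$ is chosen by Grothendieck factorization so that $F\subset\cL(s_m,\ell_2)\cap\cL(\ell_2,s_{-m})$. Commutativity gives $\|x\xi\|_{\ell_2}=\|x^*\xi\|_{\ell_2}$, and then the triangle inequality plus Cauchy--Schwarz yield $r_{m+2}(x)\le\frac{\pi}{\sqrt6}[x]_m$ with $r_{m+2}(x)=\sup_{|\xi|_{m+2}\le1}\max\{\|x\xi\|_{\ell_2},\|x^*\xi\|_{\ell_2}\}$ already known to be dominating (Lemma \ref{lem_frechet_subspaces_lstars}); condition ($\alpha$) for $[\cdot,\cdot]_m$ is immediate termwise. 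If you want to keep a single-vector formulation you would have to prove your cyclicity estimate, which for general closed commutative ${}^*$-subalgebras of $\lstars$ is not available by the arguments you cite; otherwise you should replace $\|x\xi_0\|_{\ell_2}$ by a weighted $\ell_2$-sum of the $\|xe_j\|_{\ell_2}$ as above.
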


\begin{theorem}\label{th-complemented-bounded}
Let $E$ be a unital commutative Fr\'echet ${}^*$-algebra isomorphic as a Fr\'echet space to a nuclear power series space of infinite type. Then the following statements are equivalent.
\begin{enumerate}[\upshape(i)]
 \item $E$ is isomorphic as a Fr\'echet ${}^*$-algebra to a complemented ${}^*$-subalgebra $F$ of $\lstars$ such that $F\subset \cL(\ell_2)$.
 \item $E$ is isomorphic as a Fr\'echet ${}^*$-algebra to a closed ${}^*$-subalgebra $F$ of $\lstars$ such that $F\subset \cL(\ell_2)$.
 \item $E$ is a $\beta\mathrm{DN}$-algebra.
\end{enumerate}
\end{theorem}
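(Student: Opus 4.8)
The plan is to run the argument in parallel with the proof of Theorem \ref{th_complemented_subalg_lstars}, keeping track of the single new feature that separates the two statements: condition ($\beta$) of Definition \ref{def-Hilbert-algebra} corresponds exactly to the requirement that the operators in $F$ be bounded on $\ell_2$. The implication (i) $\Rightarrow$ (ii) is trivial, since a complemented subalgebra is in particular closed.

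For (ii) $\Rightarrow$ (iii): since $F\subset\cL(\ell_2)$ is in particular a closed ${}^*$-subalgebra of $\lstars$, Theorem \ref{th_complemented_subalg_lstars} already supplies a dominating Hilbert norm $||\cdot||$ on $E$ satisfying ($\alpha$), and that norm is of the form $||x||^2=\langle\Phi(x)\Omega,\Phi(x)\Omega\rangle$ for the isomorphism $\Phi\colon E\to F$ and a cyclic-separating vector $\Omega\in s$. Because each $\Phi(x)$ is now a bounded operator on $\ell_2$, one has
\[||xy||=||\Phi(x)\Phi(y)\Omega||_{\ell_2}\leq ||\Phi(x)||\,||\Phi(y)\Omega||_{\ell_2}=||\Phi(x)||\,||y||,\]
so ($\beta$) holds with $C=||\Phi(x)||$, and $E$ is a $\beta$DN-algebra. (If one prefers not to quote the explicit form of the norm, the same norm can be rebuilt from a cyclic-separating vector $\Omega\in s$ for $F$, whose existence is then the only delicate point.)

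For (iii) $\Rightarrow$ (i): let $||\cdot||$ be a dominating Hilbert norm satisfying ($\alpha$) and ($\beta$); by Remark \ref{rem-def-Hilbert-alg} it is a Hilbert algebra norm. I would first form the left regular representation $m\colon E\to\cL^*(E,||\cdot||)$, $m_x(y):=xy$. Condition ($\alpha$) gives $m_x^*=m_{x^*}$ on $E$, so $m_x\in\cL^*(E,||\cdot||)$, while ($\beta$) says precisely that each $m_x$ extends to a bounded operator on the completion $H$ of $(E,||\cdot||)$. By Theorem \ref{th-vogt-unitary-iso} there is an isomorphism $u\colon E\to\Lambda_\infty(\alpha)$ with $||u\xi||_{\ell_2}=||\xi||$, which extends to a unitary $H\to\ell_2$; conjugating, $\pi(x):=um_xu^{-1}$ lies in $\cL^*(\Lambda_\infty(\alpha))\cap\cL(\ell_2)$, and $x\mapsto\pi(x)$ is a ${}^*$-homomorphism into bounded operators.

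The essential new step is to transport $\pi$ into $\lstars$ \emph{without losing} $\ell_2$-boundedness, which is where the bounded case genuinely departs from Theorem \ref{th_complemented_subalg_lstars}: there any topological embedding of $\Lambda_\infty(\alpha)$ into $s$ would do, but such an embedding need not preserve the $\ell_2$-norm, and conjugating $\pi(x)$ by it may destroy boundedness. Instead I would use an explicit isometric coordinate embedding, placing the $j$-th basis vector of $\Lambda_\infty(\alpha)$ at position $n_j\approx e^{\alpha_j}$ in $s$ (legitimate by nuclearity, $\sup_{j}\frac{\log j}{\alpha_j}<\infty$); the resulting map $v$ is simultaneously an $\ell_2$-isometry onto a coordinate subspace and a Fréchet isomorphism of $\Lambda_\infty(\alpha)$ onto a closed subspace of $s$, whose $\ell_2$-orthogonal complement is again a coordinate subspace. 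Setting $\rho(x):=v\pi(x)v^*$ gives an operator acting as $\pi(x)$ on the image block and as $0$ on the complement; one checks $\rho(x)\in\lstars\cap\cL(\ell_2)$ and, via Proposition \ref{prop-top-Lstar-power-series}, that $\rho$ is a topological ${}^*$-isomorphism onto its image, the complementing $\tau^*$-continuous projection being $y\mapsto v\,Q(v^*yv)\,v^*$, where $Q$ is the projection of $\cL^*(\Lambda_\infty(\alpha))$ onto $\pi(E)$ furnished by Theorem \ref{th_complemented_subalg_lstars}. The hard part will be exactly this verification that the isometric embedding $v$ respects both the Fréchet and the $\ell_2$ structure well enough that $\rho(x)$ lands in $\lstars\cap\cL(\ell_2)$ with a continuous complementing projection; the secondary technical point is the existence of the separating vector $\Omega\in s$ needed in (ii) $\Rightarrow$ (iii).
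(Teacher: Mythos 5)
Your overall architecture is sound and, for (iii)$\Rightarrow$(i), essentially parallels the paper: the left regular representation $x\mapsto m_x$ lands in $\cL^*(E,||\cdot||)$ by ($\alpha$) and consists of $||\cdot||$-bounded operators by ($\beta$); one then transports $\cL^*(E,||\cdot||)$ into $\lstars$ by an $\ell_2$-isometric embedding onto an orthogonally complemented subspace of $s$, and $\ell_2$-boundedness survives precisely because the embedding is isometric at the level of the dominating Hilbert norm. The paper does this via Lemma \ref{lemma_iso_onto_orth_complemented} (which invokes Vogt's orthogonalization theorem to make a complemented subspace of $s$ orthogonally complemented) and Lemma \ref{lemma_lstarE}; your explicit coordinate embedding $e_j\mapsto e_{n_j}$ with $\log n_j\asymp\alpha_j$ is a legitimate, more concrete substitute in the special case $\Lambda_\infty(\alpha)$, at the cost of having to verify by hand that it is simultaneously a Fr\'echet isomorphism onto a sectional subspace and an $\ell_2$-isometry with coordinate orthocomplement. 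Note, though, that the ``genuine departure'' you announce is smaller than you suggest: the paper's embedding $\phi$ in Lemma \ref{lemma_lstarE} already satisfies $||\phi\xi||_{\ell_2}=||\xi||$, so the same machinery serves both theorems, and the only extra work in the bounded case is the one-line estimate $||\phi m_x\phi^*\xi||_{\ell_2}\leq C||\xi||_{\ell_2}$.

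The genuine gap is in (ii)$\Rightarrow$(iii). You assert that the dominating Hilbert norm produced by Theorem \ref{th_complemented_subalg_lstars} has the form $||x||^2=\langle\Phi(x)\Omega,\Phi(x)\Omega\rangle$ for a single cyclic-separating vector $\Omega\in s$. That is not what the theorem (or its proof) provides: the norm constructed there is $[\Phi(x)]_m^2=\sum_{j}||\Phi(x)e_j||_{\ell_2}^2\,j^{-2m-2}$, a weighted sum of vector states over the whole orthonormal basis, not a single vector state. The existence of a vector $\Omega\in s$ that is separating for $F$ \emph{and} makes $x\mapsto||\Phi(x)\Omega||_{\ell_2}$ a dominating norm is exactly the point you would need to prove, and you leave it open; nothing in the paper's toolkit supplies it. The repair is immediate and is what the paper does: since $F\subset\cL(\ell_2)$ one may take $m=0$ in Lemma \ref{lem_commutative_subalgebras_lstars}, and your boundedness computation goes through verbatim against the weighted sum, namely $[\Phi(xy)]_0^2=\sum_j||\Phi(x)\Phi(y)e_j||_{\ell_2}^2 j^{-2}\leq||\Phi(x)||_{\cL(\ell_2)}^2\sum_j||\Phi(y)e_j||_{\ell_2}^2 j^{-2}$, which gives ($\beta$) with $C=||\Phi(x)||_{\cL(\ell_2)}$. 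So the idea behind your estimate is the right one; it is only the unjustified reduction to a single vector state that must be discarded.
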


We divide the proof into a sequence of lemmas. As a by-product, we obtain also three results which are interesting enough to be stated as
``corollaries".

For every $N,n\in\N_0$ we define the space
\[\cL(s_n,s_N)\cap\cL(s_{-N},s_{-n}):=\{x\in\cL(s_n,s_N):\;\exists\widetilde{x}\in\cL(s_{-N},s_{-n})\quad \widetilde{x}_{\mid s_n}=x\}\]
with the norm
\[r_{N,n}(x):=\max\bigg\{\sup_{|\xi|_{n}\leq1}|x\xi|_N,\sup_{|\xi|_{-N}\leq1}|\widetilde{x}\xi|_{-n}\bigg\}.\]
Formally, the space $\cL(s_n,s_N)\cap\cL(s_{-N},s_{-n})$ is the projective limit of the Banach spaces $\cL(s_n,s_N)$ and $\cL(s_{-N},s_{-n})$ 
with their standard norms, and thus it is a Banach space itself.
Since 
\[\sup_{|\xi|_{n}\leq1}|x^*\xi|_N=\sup_{|\xi|_{n}\leq1}\sup_{|\eta|_{-N}\leq1}|\langle x^*\xi,\eta\rangle|=\sup_{|\xi|_{n}\leq1}\sup_{|\eta|_{-N}\leq1}|\langle \xi,\widetilde{x}\eta\rangle|
=\sup_{|\eta|_{-N}\leq1}|\widetilde{x}\eta|_{-n},\]
we have
\[r_{N,n}(x)=\max\bigg\{\sup_{|\xi|_{n}\leq1}|x\xi|_N,\sup_{|\xi|_{n}\leq1}|x^*\xi|_N\bigg\},\]
where $x^*\in\cL(s_n,s_N)$ is the hilbertian adjoint of the operator $\widetilde{x}$. 
Moreover, $\lstars=\lss\cap\ldsds$ (see, e.g., \cite[Prop. 3.7]{CiasPisz}), hence  
\[\lstars=\{x\colon s\to s:\;x\text{ linear and }\forall N\in\N_0\,\exists n\in\N_0\quad r_{N,n}(x)<\infty\}\]
as sets. Therefore, we can endow $\lstars$ with the topology of the PLB-space (a countable projective limit of a countable inductive limit of Banach spaces) 
\[\operatorname{proj}_{N\in\N_0}\operatorname{ind}_{n\in\N_0}\cL(s_n,s_N)\cap\cL(s_{-N},s_{-n}).\]
It appears that the topology $\tau^*$ and the PLB-topology on $\lstars$ coincide.  

\begin{lemma}
We have
\[\lstars=\operatorname{proj}_{N\in\N_0}\operatorname{ind}_{n\in\N_0}\cL(s_n,s_N)\cap\cL(s_{-N},s_{-n})\]
as topological vector spaces.
\end{lemma}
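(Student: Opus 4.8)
The set-theoretic identity has already been established above, so the task is to show that the two locally convex topologies on $\lstars$ coincide. By Proposition \ref{prop-top-Lstar-power-series}, applied with $E=s$ and $||\cdot||=||\cdot||_{\ell_2}$, the topology $\tau^*$ is generated by the seminorms $p_{N,B}(x)=\max\{\sup_{\xi\in B}|x\xi|_N,\sup_{\xi\in B}|x^*\xi|_N\}$ with $N\in\N_0$ and $B$ bounded in $s$, whereas the PLB-topology is the initial topology for the projections onto the $(LB)$-spaces $X_N:=\operatorname{ind}_n(\cL(s_n,s_N)\cap\cL(s_{-N},s_{-n}))$. I would prove the two inclusions of topologies separately. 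A convenient reduction is to note that the sets $B_a:=\{\xi\in s:|\xi_j|\le a_j\text{ for all }j\}$, where $a=(a_j)$ ranges over the nonnegative elements of $s$, form a fundamental system of bounded subsets of $s$: given a bounded $B$, put $a_j:=\sup_{\xi\in B}|\xi_j|$; since $a_j j^q\le\sup_{\xi\in B}|\xi|_q<\infty$ for every $q$ we get $a\in s$, and $B\subseteq B_a$. Hence it suffices to work with the seminorms $p_{N,B_a}$.

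For the inclusion $\tau^*\subseteq\text{PLB}$ I would check that each $p_{N,B}$ is continuous on $X_N$, which for an inductive limit of Banach spaces amounts to a bound on every Banach step. If $x\in\cL(s_n,s_N)\cap\cL(s_{-N},s_{-n})$, then both $\sup_{|\eta|_n\le1}|x\eta|_N$ and $\sup_{|\eta|_n\le1}|x^*\eta|_N$ are at most $r_{N,n}(x)$, and therefore
\[p_{N,B}(x)\le\Big(\sup_{\xi\in B}|\xi|_n\Big)\,r_{N,n}(x).\]
Since $\sup_{\xi\in B}|\xi|_n<\infty$ for a bounded $B$, the seminorm $p_{N,B}$ is bounded by a multiple of $r_{N,n}$ on the $n$-th step, hence continuous on the inductive limit $X_N$ and, composed with the projection, continuous for the PLB-topology. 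Thus every $\tau^*$-seminorm is PLB-continuous.

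The reverse inclusion $\text{PLB}\subseteq\tau^*$ is the substantial part. A fundamental system of zero-neighborhoods for the PLB-topology consists, for each fixed $N$, of the absolutely convex hulls $U=\Gamma\big(\bigcup_n C_n^{-1}\{x:r_{N,n}(x)\le1\}\big)$ determined by an arbitrary sequence $(C_n)$ of positive constants, and I must produce, for each such $U$, a bounded set $B$ and $\epsilon>0$ with $\{p_{N,B}\le\epsilon\}\subseteq U$. The plan is to choose $B=B_a$ with $a$ decaying fast enough (depending on $N$ and $(C_n)$) and then, given $x$ with $p_{N,B_a}(x)$ small, to split $x$ along dyadic blocks of the index, $x=\sum_n x^{(n)}$, in such a way that $r_{N,n}(x^{(n)})\le C_n^{-1}\lambda_n$ with $\sum_n\lambda_n\le1$, exhibiting $x$ as an element of $U$. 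What makes such a decomposition possible is the nuclearity of $s$: the inclusions $s_{n+1}\hookrightarrow s_n$ are Hilbert--Schmidt (indeed $\sum_j j^{-2}<\infty$), so the connecting maps of $X_N$ are nuclear and $X_N$ is a regular, boundedly retractive $(LB)$-space; this regularity is exactly what converts the pointwise control encoded in $p_{N,B_a}$ into the summable block estimates needed to land inside the convex hull $U$. I expect this step -- getting a $\tau^*$-ball inside an inductive-limit neighborhood -- to be the main obstacle; the forward inclusion and the set-theoretic identity are routine by comparison. As an alternative to the explicit construction, one could invoke completeness of $(\lstars,\tau^*)$ together with an open mapping theorem for PLB-spaces applied to the continuous bijection furnished by the first inclusion, but the direct estimate is more self-contained.
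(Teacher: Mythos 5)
Your first inclusion ($\tau^*$ weaker than the PLB-topology, via the estimate $p_{N,B}(x)\le(\sup_{\xi\in B}|\xi|_n)\,r_{N,n}(x)$ on each Banach step) is exactly the paper's computation and is fine. The genuine gap is the reverse inclusion, which is the substance of the lemma: you only describe a \emph{plan} -- a dyadic block decomposition $x=\sum_n x^{(n)}$ with $r_{N,n}(x^{(n)})\le C_n^{-1}\lambda_n$ -- and you yourself flag it as the main obstacle. As written, the claim that regularity (or bounded retractivity) of the $(LB)$-spaces $\operatorname{ind}_n\cL(s_n,s_N)\cap\cL(s_{-N},s_{-n})$ ``converts the pointwise control encoded in $p_{N,B_a}$ into the summable block estimates'' is not an argument: regularity is a statement about bounded sets of the inductive limit being bounded in some step, and it does not by itself produce a decomposition of an \emph{individual} operator into pieces lying in prescribed multiples of the step unit balls. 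No construction of the $x^{(n)}$ is given, and it is not clear such an explicit splitting is feasible; this direction is precisely what the paper declines to prove by hand.

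The paper instead takes the route you mention only as an ``alternative'': since the PLB-space is webbed and $(\lstars,\tau^*)$ is \emph{ultrabornological} (by \cite[Cor.~4.2]{CiasPisz}), de Wilde's open mapping theorem \cite[Th.~24.30]{MeV} applied to the continuous identity from the PLB-space onto $(\lstars,\tau^*)$ -- i.e.\ to your first inclusion -- immediately yields that the map is open, hence a topological isomorphism. Note that the hypothesis you would need here is ultrabornologicity of the target, not its completeness; completeness of $(\lstars,\tau^*)$ is not enough for the open mapping theorem in this generality. So to repair your proof, either carry out the block decomposition in full (which I expect to be delicate at best), or replace the second half by the webbed/ultrabornological open mapping argument, citing the ultrabornologicity of $(\lstars,\tau^*)$ rather than its completeness.
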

\begin{proof}
By \cite[Cor. 4.2]{CiasPisz}, $\lstars$ is ultrabornological and $\operatorname{proj}_{N\in\N_0}\operatorname{ind}_{n\in\N_0}\cL(s_n,s_N)\cap\cL(s_{-N},s_{-n})$ is webbed as a PLB-space. 
Hence, by the open mapping theorem (see e.g. \cite[Th. 24.30]{MeV}), it is enough to show that the identity map 
\[\iota\colon\operatorname{proj}_{N\in\N_0}\operatorname{ind}_{n\in\N_0}\cL(s_n,s_N)\cap\cL(s_{-N},s_{-n})\to\lstars\]
is continuous. Let $N\in\N_0$ and let $B$ be a bounded subset of $s$. For every $m\in\N_0$ choose a constant $\lambda_m>0$ such that 
$B\subset \{\xi\in s\colon |\xi|_m\leq \lambda_m\}$. Then 
\[p_{N,B}(x)=\max\bigg\{\sup_{\xi\in B}|x\xi|_N,\sup_{\xi\in B}|x^*\xi|_N\bigg\}\leq\lambda_m\max\bigg\{\sup_{|\xi|_m\leq1}|x\xi|_N,\sup_{|\xi|_m\leq1}|x^*\xi|_N\bigg\}=\lambda_m r_{N,m}(x)\]
for every $m\in\N_0$ and $x\in\cL(s_m,s_N)\cap\cL(s_{-N},s_{-m})$, and thus $\iota$ is continuous.
\end{proof}

\begin{lemma}\label{lem_frechet_subspaces_lstars}
For every Fr\'echet subspace $F$ of $\lstars$ there is $m\in\N_0$ such that $F\subset\cL(s_m,\ell_2)\cap\cL(\ell_2,s_{-m})$ and, moreover, for each such $m$,
\[r_m\colon F\to [0,\infty),\quad r_m(x):=\max\{\sup_{|\xi|_{m}\leq1}||x\xi||_{\ell_2},\sup_{|\xi|_{m}\leq1}||x^*\xi||_{\ell_2}\},\]
is a dominating norm on $F$.
\end{lemma}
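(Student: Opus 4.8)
The plan is to reduce everything to the scalar property (DN) of $s$, namely the Cauchy--Schwarz estimate $|\eta|_N^2\le|\eta|_0\,|\eta|_{2N}$ valid for all $\eta\in s$ (recall $|\cdot|_0=||\cdot||_{\ell_2}$), which is precisely (DN) of $s$ with dominating norm $||\cdot||_{\ell_2}$. For the existence of $m$: the inclusion $F\hookrightarrow\lstars$ is continuous, so composing it with the canonical projection of the PLB-representation from the preceding lemma onto its $N=0$ step gives a continuous linear map from the Fréchet space $F$ into the (LB)-space $\operatorname{ind}_{n\in\N_0}\big(\cL(s_n,\ell_2)\cap\cL(\ell_2,s_{-n})\big)$ (here $s_0=\ell_2$, and this step carries exactly the norm $r_n$). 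By Grothendieck's factorization theorem (cf.\ \cite{MeV}) this map takes values in a single step, so $F\subset\cL(s_m,\ell_2)\cap\cL(\ell_2,s_{-m})$ for some $m\in\N_0$. All remaining assertions will be proved for an arbitrary index $m$ enjoying this inclusion property, so I fix such an $m$.

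Next I would show that $r_m$ is a continuous norm on $F$. It is a norm, since $r_m(x)=0$ forces $x\xi=0$ for all $\xi\in s$ and hence $x=0$, and it is finite on $F$ by the choice of $m$. To obtain continuity, note that for $x\in F$ the operators $x,x^*\colon s_m\to\ell_2$ are bounded (this is the content of $F\subset\cL(s_m,\ell_2)\cap\cL(\ell_2,s_{-m})$ via the identity for $r_m$ established just before the statement), so by density of $s$ in $s_m$ the two suprema defining $r_m$ may be taken over $\xi\in s$ with $|\xi|_m\le1$. Thus $r_m$ is a supremum of the $\tau^*$-continuous functions $x\mapsto|x\xi|_0$ and $x\mapsto|x^*\xi|_0$, hence lower semicontinuous on $F$. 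A finite lower semicontinuous seminorm on a Fréchet (thus Baire) space is continuous, so $r_m$ is continuous.

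The core step is the domination estimate, which I would obtain by interpolating the image sequences rather than the operator. Fix $N\in\N_0$ and a bounded set $B\subset s$, and put $\lambda:=\sup_{\xi\in B}|\xi|_m<\infty$. Applying $|\eta|_N^2\le|\eta|_0\,|\eta|_{2N}$ to $\eta=x\xi$ for $\xi\in B$, and using $|x\xi|_0=||x\xi||_{\ell_2}\le r_m(x)\,|\xi|_m\le\lambda\,r_m(x)$ together with $|x\xi|_{2N}\le p_{2N,B}(x)$, I get $|x\xi|_N^2\le\lambda\,r_m(x)\,p_{2N,B}(x)$; taking the supremum over $\xi\in B$, and arguing identically with $x^*$ in place of $x$, yields $p_{N,B}(x)^2\le\lambda\,r_m(x)\,p_{2N,B}(x)$ for all $x\in F$. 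Since the seminorms $p_{N,B}$ generate the topology of $F$, this establishes property (DN) with dominating norm $r_m$.

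The step I expect to be the crux is exactly this last interpolation, and the subtlety is to keep the larger seminorm within the admissible family: interpolating the operator $x$ directly in the Hilbert scale would force the unit ball of some $s_k$ into the role of $B$, and such balls are not bounded in $s$. Interpolating the images $x\xi\in s$ avoids this, since it leaves the bounded set $B$ untouched and merely doubles the order $N$, so that $p_{2N,B}$ is again one of the defining seminorms. A secondary technical point is that continuity of $r_m$ must be secured for every admissible $m$, and not only for the index produced by the factorization theorem, which is the reason for the Baire-category argument above.
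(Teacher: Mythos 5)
Your proof is correct and follows essentially the same route as the paper: Grothendieck's factorization theorem applied to the inclusion of $F$ into $\operatorname{ind}_{n}\cL(s_n,\ell_2)\cap\cL(\ell_2,s_{-n})$ yields the index $m$, and the (DN) estimate comes from the Cauchy--Schwarz inequality $|\eta|_N^2\le|\eta|_0\,|\eta|_{2N}$ applied to $\eta=x\xi$ and $\eta=x^*\xi$, giving $p_{N,B}(x)^2\le\lambda\, r_m(x)\, p_{2N,B}(x)$ exactly as in the paper. Your lower-semicontinuity/Baire argument for the continuity of $r_m$ for \emph{every} admissible $m$ (not just the one produced by factorization) is a small but welcome addition, since the paper only asserts this continuity.
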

\begin{proof}
By the very definition of projective topology, the canonical embedding 
\[\operatorname{proj}_{N\in\N_0}\operatorname{ind}_{n\in\N_0}\cL(s_n,s_N)\cap\cL(s_{-N},s_{-n})\hookrightarrow\operatorname{ind}_{n\in\N_0}\cL(s_n,\ell_2)\cap\cL(\ell_2,s_{-n})\]
is continuous and thus the identity map
$\kappa\colon F\hookrightarrow\operatorname{ind}_{n\in\N_0}\cL(s_n,\ell_2)\cap\cL(\ell_2,s_{-n})$ is continuous, as well. 
Hence, by Grothendieck's factorization theorem \cite[Th. 24.33]{MeV}, there is $m\in\N$ such that 
$\kappa(F)\subset \cL(s_m,\ell_2)\cap\cL(\ell_2,s_{-m})$. Since we can identify in a obvious way $F$ with $\kappa(F)$, we get the first
part of the thesis.

Now, fix an arbitrary $m\in\N_0$ such that $F\subset\cL(s_m,\ell_2)\cap\cL(\ell_2,s_{-m})$. Then $r_m$ is a continuous seminorm on $F$. 
Since $F$ is a Fr\'echet space, there is a sequence $(B_N)_{N\in\N}$, $B_N\subset B_{N+1}$, of bounded subsets of $s$ such that $({p_{N}})_{N\in\N}$,
\[p_{N}(x):=\max\bigg\{\sup_{\xi\in B_N}|x\xi|_N,\sup_{\xi\in B_N}|x^*\xi|_N\bigg\}\]
for $x\in F$, is a fundamental sequence of seminorms on $F$.
Moreover, for every $N\in\N$ there is $\lambda_N>0$ such that 
$B_N\subset\{\xi\in s:\,\,|\xi|_{m}\leq \lambda_N\}$. 
Hence, for $x\in F$ and $N\in\N$, we obtain
\begin{align*}
p_{N}^2(x) &=\max\bigg\{\sup_{\xi\in B_N}|x\xi|_N^2,\sup_{\xi\in B_N}|x^*\xi|_N^2\bigg\}
\leq\max\bigg\{\sup_{\xi\in B_N}(||x\xi||_{\ell_2}|x\xi|_{2N}),\sup_{\xi\in B_N}(||x^*\xi||_{\ell_2}|x^*\xi|_{2N})\bigg\}\\
&\leq\max\bigg\{\sup_{\xi\in B_N}||x\xi||_{\ell_2}\cdot\sup_{\xi\in B_N}|x\xi|_{2N},\sup_{\xi\in B_N}||x^*\xi||_{\ell_2}\cdot\sup_{\xi\in B_N}|x^*\xi|_{2N}\bigg\}\\
&\leq \lambda_N\max\bigg\{\sup_{|\xi|_{m}\leq1}||x\xi||_{\ell_2}\cdot\sup_{\xi\in B_{2N}}|x\xi|_{2N},\sup_{|\xi|_{m}\leq1}||x^*\xi||_{\ell_2}\cdot\sup_{\xi\in B_{2N}}|x^*\xi|_{2N}\bigg\},
\end{align*}
where the first inequality follows from the Cauchy-Schwartz inequality.
Finally, since
\[\max\{ab,cd\}\leq\max\{a,c\}\cdot\max\{b,d\}\]
for all $a,b,c,d\geq0$, we obtain
\begin{align*}
p_{N}^2(x)&\leq \lambda_N\max\bigg\{\sup_{|\xi|_{m}\leq1}||x\xi||_{\ell_2},\sup_{|\xi|_{m}\leq1}||x^*\xi||_{\ell_2}\bigg\}\cdot\max\bigg\{\sup_{\xi\in B_{2N}}|x\xi|_{2N},\sup_{\xi\in B_{2N}}|x^*\xi|_{2N}\bigg\}\\
&=\lambda_Nr_m(x)p_{2N}(x)
\end{align*}
for all $x\in F$, and thus $r_m$ is a dominating norm on $F$.
\end{proof}

\begin{corollary}\label{cor-frechet-subspace-quotient}
\begin{enumerate}[\upshape(i)]
 \item Every Fr\'echet subspace of $\lstars$ is isomorphic to a closed subspace of $s$. 
 \item Every Fr\'echet quotient of $\lstars$ is isomorphic to a quotient of $s$.
 \item Every complemented Fr\'echet subspace of $\lstars$ is isomorphic to a complemented subspace of $s$. 
\end{enumerate}
\end{corollary}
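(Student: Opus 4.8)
The plan is to deduce all three statements from the Vogt--Wagner Theorem \ref{th-DN}: part (i) will follow once the subspace is shown to be nuclear with property (DN), part (ii) once the quotient is shown to be nuclear with property $(\Omega)$, and part (iii) by combining the two. Since every structural fact I need about $\lstars$ is available in \cite{CiasPisz}, I would first record that $\lstars$ is nuclear; this is transparent from the PLB-representation of the preceding Lemma, because the linking maps of both the inner inductive and the outer projective spectra factor through the nuclear inclusions $s_{n'}\hookrightarrow s_n$ and $s_{-n}\hookrightarrow s_{-n'}$ (for $n'-n$ sufficiently large), and pre- or post-composition with a nuclear map is again nuclear. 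Granting this, every Fr\'echet subspace and every Fr\'echet quotient of $\lstars$ is nuclear, so only the linear-topological invariants (DN) and $(\Omega)$ remain to be verified.

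For (i), let $F$ be a Fr\'echet subspace of $\lstars$. Lemma \ref{lem_frechet_subspaces_lstars} produces an $m\in\N_0$ together with the continuous norm $r_m$ on $F$ and the estimate $p_N^2(x)\le \lambda_N\, r_m(x)\, p_{2N}(x)$; this is exactly the inequality in Definition \ref{def-DN-Omega}(1), so $r_m$ is a dominating norm and $F$ has property (DN). As $F$ is also nuclear, Theorem \ref{th-DN}(i) gives that $F$ is isomorphic to a closed subspace of $s$.

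For (ii), let $F$ be a Fr\'echet quotient of $\lstars$. Here I would establish the statement dual to Lemma \ref{lem_frechet_subspaces_lstars}: every such $F$ has property $(\Omega)$. The natural route exploits the symmetry of the PLB-structure $\operatorname{proj}_N\operatorname{ind}_n \cL(s_n,s_N)\cap\cL(s_{-N},s_{-n})$ under passage to adjoints, namely the fact that alongside the ``$s$-side'' components $\cL(s_n,s_N)$ -- which governed (DN) above -- there sit the ``$s'$-side'' components $\cL(s_{-N},s_{-n})$. Transposing a quotient map $\lstars\to F$ converts it into a topological embedding of $F'_\beta$, and I would then run the Grothendieck-factorization-plus-Cauchy--Schwarz argument of Lemma \ref{lem_frechet_subspaces_lstars} on these negative-index components, arranging that the resulting interpolation estimate becomes precisely the dual-norm inequality of Definition \ref{def-DN-Omega}(2). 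Once $(\Omega)$ is in hand, Theorem \ref{th-DN}(ii) yields that $F$ is isomorphic to a quotient of $s$. I expect this to be the main obstacle: the transposition must be handled in the non-metrizable PLB-setting, and the bookkeeping that matches the $r_{N,n}$-type estimates to the exact form of $(\Omega)$, including the choice of the exponent $\theta$, is the delicate point.

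Finally, for (iii), a complemented Fr\'echet subspace $F$ of $\lstars$ is simultaneously (isomorphic to) a Fr\'echet subspace and, via the complementary projection, a Fr\'echet quotient of $\lstars$. Hence, by the arguments for (i) and (ii), $F$ is nuclear and enjoys both (DN) and $(\Omega)$, so Theorem \ref{th-DN}(iii) gives that $F$ is isomorphic to a complemented subspace of $s$.
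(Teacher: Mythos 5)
Your parts (i) and (iii) follow the paper exactly: Lemma \ref{lem_frechet_subspaces_lstars} supplies the dominating norm, hence (DN), for every Fr\'echet subspace, and a complemented subspace is simultaneously a subspace and a quotient, so Theorem \ref{th-DN} finishes once (ii) is available. One side remark before the main point: your justification of nuclearity is not sound as written. Pre-composition with a nuclear inclusion $s_{n'}\hookrightarrow s_n$ does \emph{not} induce a nuclear, or even compact, operator $\cL(s_n,s_N)\to\cL(s_{n'},s_N)$ between the operator spaces: already for a rank-one $\iota=\langle\cdot,a\rangle b$ the map $x\mapsto x\iota=\langle\cdot,a\rangle\,xb$ sends the unit ball onto a set isometric to a ball of an infinite-dimensional Hilbert space. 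Nuclearity of $\lstars$ is a genuine theorem of \cite{CiasPisz} (proved via the K\"othe-matrix representation $\Lambda(\cA)$), and you should simply cite it, as the paper does.

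The substantive gap is part (ii). Everything hinges on showing that a Fr\'echet quotient of $\lstars$ has property $(\Omega)$, and here you only describe a plan --- transpose the quotient map and rerun the Grothendieck-factorization-plus-Cauchy--Schwarz argument on the negative-index components --- while yourself flagging that the transposition in the non-metrizable PLB-setting and the matching of the resulting estimates to the three-norm inequality of Definition \ref{def-DN-Omega}(2), including the choice of $\theta$, are unresolved. These are not bookkeeping issues: $(\lstars)'_\beta$ is the dual of a PLS-space, Grothendieck's factorization theorem is a statement about Fr\'echet (more generally webbed/ultrabornological) targets and does not transplant to this dual the way it did for subspaces, and no single Cauchy--Schwarz step produces the required \emph{interpolation} inequality between three dual norms the way $r_m$ fell out of Lemma \ref{lem_frechet_subspaces_lstars}. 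The paper sidesteps all of this by invoking a quotient-stable $(\Omega)$-type invariant for PLS-spaces: \cite[Prop. 4.7]{CiasPisz} together with \cite[Cor. 1.2(a) and (c)]{BonDom} gives that every Fr\'echet quotient of $\lstars$ has $(\Omega)$, after which \cite[Prop. 31.6]{MeV} applies. As it stands, your proof of (ii) --- and therefore the $(\Omega)$ half of your proof of (iii) --- is incomplete.
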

\begin{proof}
First note that every closed subspace and quotient of $\lstars$ is nuclear because $\lstars$ is nuclear itself (see \cite[Prop. 3.8 \& Cor. 4.2]{CiasPisz}). 

(i) This follows immediately from Lemma \ref{lem_frechet_subspaces_lstars} and \cite[Prop. 31.5]{MeV}.

(ii) Let $E$ be a Fr\'echet quotient of $\lstars$. It follows from \cite[Prop. 4.7]{CiasPisz} and \cite[Cor. 1.2(a) and (c)]{BonDom} that $E$, being isomorphic to a quotient of $\lstars$, has the property (\textrm{$\Omega$}). 
Therefore, by \cite[Prop. 31.6]{MeV}, $E$ is isomorphic to a quotient of $s$.

(iii) This is a direct consequence of the previous items and \cite[Prop. 31.7]{MeV}.  
\end{proof}

Let $e_j$ denote the $j$-th unit vector in $\C^\N$.
If $F$ is a Fr\'echet subspace of $\lstars$ then, by Lemma \ref{lem_frechet_subspaces_lstars}, there is $m\in\N_0$ such that $F\subset\cL(s_m,\ell_2)\cap\cL(\ell_2,s_{-m})$ and $r_m$ 
is a continuous (dominating) norm on $F$. Since, for all $x\in F$, we have 
\begin{align*}
[x]_m&:=\bigg(\sum_{j=1}^\infty||xe_j||_{\ell_2}^2j^{-2m-2}\bigg)^{1/2}
\leq\bigg(\sum_{j=1}^\infty j^{-2}\bigg)^{1/2}\cdot\sup_{|\xi|_m\leq1}||x\xi||_{\ell_2}\\
&\leq \frac{\pi}{\sqrt{6}}\max\bigg\{\sup_{|\xi|_m\leq1}||x\xi||_{\ell_2},\sup_{|\xi|_m\leq1}||x^*\xi||_{\ell_2}\bigg\}=\frac{\pi}{\sqrt{6}}r_m(x),
\end{align*}
the scalar product  
\begin{equation}\label{eq_scalar_product_Fm}
[\cdot,\cdot]_m\colon F\times F\to\C,\quad[x,y]_m:=\sum_{j=1}^\infty \langle xe_j,ye_j\rangle j^{-2m-2}, 
\end{equation}
is well-defined and $[\;\cdot\;]_m=\sqrt{[\cdot,\cdot]_m}$ is a continuous Hilbert norm on $F$.

\begin{lemma}\label{lem_commutative_subalgebras_lstars}
Let $F$ be a commutative Fr\'echet ${}^*$-subalgebra of $\lstars$ and let $m\in\N_0$ be such that $F\subset\cL(s_m,\ell_2)\cap\cL(\ell_2,s_{-m})$. 
Then the norm $[\;\cdot\;]_m$ defined by \textnormal{(\ref{eq_scalar_product_Fm})} is a Hilbert dominating norm on $F$ satisfying condition \textnormal{($\alpha$)}.
\end{lemma}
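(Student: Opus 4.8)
The plan is to treat the two assertions separately, since condition ($\alpha$) is purely algebraic while the dominating-norm property is the analytic core of the statement. I would first verify ($\alpha$), namely $[xy,z]_m=[y,x^*z]_m$ for all $x,y,z\in F$. Computing directly from \textnormal{(\ref{eq_scalar_product_Fm})} and recalling that the product in $\lstars$ is composition, one has $(xy)e_j=x(ye_j)$ with $ye_j\in s$, while $ze_j\in s\subset\mathscr{D}(x^*)$, so the defining adjoint relation $\langle x\eta,\zeta\rangle=\langle\eta,x^*\zeta\rangle$ applies termwise with $\eta=ye_j$, $\zeta=ze_j$, giving $\langle (xy)e_j,ze_j\rangle=\langle ye_j,(x^*z)e_j\rangle$. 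Summing against the weights $j^{-2m-2}$ yields ($\alpha$). I would point out that this step uses only $s\subset\mathscr{D}(x^*)$ and $x^*(s)\subset s$, i.e. the defining features of $\lstars$, and needs neither commutativity nor boundedness; all sums converge since $x^*z,\,xy\in F$ and $[\,\cdot\,,\cdot\,]_m$ is already known to be well defined.

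For the dominating-norm property the strategy is reductive. We already know $[\,\cdot\,]_m\le\frac{\pi}{\sqrt6}\,r_m$, so $[\,\cdot\,]_m$ is \emph{dominated by} the known dominating norm $r_m$; but a smaller norm being dominating is not automatic, so the real task is a reverse estimate. I would prove the pointwise bound $r_{m+1}(x)\le[x]_m$ and then invoke the elementary observation that any continuous norm lying above a dominating norm is itself dominating, together with the fact that $r_{m+1}$ is dominating (Lemma \ref{lem_frechet_subspaces_lstars} applied with $m+1$ in place of $m$, which is legitimate because $F\subset\cL(s_m,\ell_2)\cap\cL(\ell_2,s_{-m})$ forces $F\subset\cL(s_{m+1},\ell_2)\cap\cL(\ell_2,s_{-m-1})$). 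Since $r_{m+1}\le[\,\cdot\,]_m$, the DN inequalities for $r_{m+1}$ transfer verbatim to $[\,\cdot\,]_m$.

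The estimate $r_{m+1}(x)\le[x]_m$ is the main obstacle and the place where the hypotheses are really used. For $x\in F$ I would set $y:=x^*x\ge0$, note $\langle xe_j,xe_k\rangle=\langle e_j,ye_k\rangle$, and use positivity to get the Cauchy--Schwarz-type bound $|\langle xe_j,xe_k\rangle|\le||xe_j||_{\ell_2}\,||xe_k||_{\ell_2}$. Expanding $||x\xi||_{\ell_2}^2=\sum_{j,k}\bar\xi_j\xi_k\langle xe_j,xe_k\rangle$ and inserting this bound gives $||x\xi||_{\ell_2}\le\sum_j|\xi_j|\,||xe_j||_{\ell_2}$, and a second Cauchy--Schwarz with the weights $j^{m+1}$ and $j^{-m-1}$ yields $||x\xi||_{\ell_2}\le|\xi|_{m+1}\,[x]_m$, hence $\sup_{|\xi|_{m+1}\le1}||x\xi||_{\ell_2}\le[x]_m$. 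This half uses only positivity of $x^*x$. To control the adjoint term in $r_{m+1}$ I would finally use commutativity: $F$ commutative forces $x^*x=xx^*$, so $||x^*e_j||_{\ell_2}=||xe_j||_{\ell_2}$ for every $j$ and therefore $[x^*]_m=[x]_m$; applying the estimate just obtained to $x^*$ gives $\sup_{|\xi|_{m+1}\le1}||x^*\xi||_{\ell_2}\le[x]_m$ as well, and combining the two bounds produces $r_{m+1}(x)\le[x]_m$. Thus commutativity enters at exactly one point — transferring the operator-norm bound from $x$ to $x^*$ — while the genuine difficulty is the positivity trick $|\langle xe_j,xe_k\rangle|\le||xe_j||_{\ell_2}||xe_k||_{\ell_2}$, which is what lets a true operator norm be dominated by the weaker Hilbert--Schmidt-type quantity $[\,\cdot\,]_m$.
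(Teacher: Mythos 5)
Your proof is correct and follows essentially the same route as the paper: verify ($\alpha$) termwise from the adjoint relation, and show that $[\,\cdot\,]_m$ dominates some $r_{m+k}$, which is a dominating norm by Lemma \ref{lem_frechet_subspaces_lstars}, using commutativity only to transfer the bound to the adjoint. The only (immaterial) differences are that your Cauchy--Schwarz pairing of the weights $j^{m+1}$ and $j^{-m-1}$ gives the slightly sharper estimate $r_{m+1}(x)\leq[x]_m$ where the paper settles for $r_{m+2}\leq\frac{\pi}{\sqrt{6}}[\,\cdot\,]_m$, and your ``positivity trick'' $|\langle xe_j,xe_k\rangle|\leq||xe_j||_{\ell_2}\,||xe_k||_{\ell_2}$ is just the Cauchy--Schwarz inequality in $\ell_2$, so the resulting bound $||x\xi||_{\ell_2}\leq\sum_j|\xi_j|\,||xe_j||_{\ell_2}$ is the same triangle-inequality step the paper uses.
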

\begin{proof}
Since $F$ is commutative, we have
\[||x\xi||_{\ell_2}=||x^*\xi||_{\ell_2}\]
for all $x\in E$ and all $\xi\in s$. Hence,
\begin{align*}
r_{m+2}(x)&=\max\bigg\{\sup_{|\xi|_{m+2}\leq1}||x\xi||_{\ell_2},\sup_{|\xi|_{m+2}\leq1}||x^*\xi||_{\ell_2}\bigg\}=\sup_{|\xi|_{m+2}\leq1}||x\xi||_{\ell_2}\\
&=\sup_{|\xi|_{m+2}\leq1}\big|\big|\sum_{j=1}^\infty\xi_jj^{m+2}\cdot x(e_jj^{-m-2})\big|\big|_{\ell_2}\leq\sup_{|\xi|_{m+2}\leq1}\sum_{j=1}^\infty|\xi_j|j^{m+2}\cdot||xe_j||_{\ell_2}\cdot j^{-m-2}\\
&\leq\sum_{j=1}^\infty||xe_j||_{\ell_2}\cdot j^{-m-2}
\leq\bigg(\sum_{j=1}^\infty j^{-2}\bigg)^{1/2}\cdot\bigg(\sum_{j=1}^\infty||xe_j||_{\ell_2}^2j^{-2m-2}\bigg)^{1/2}=\frac{\pi}{\sqrt{6}}[x]_m.
\end{align*}
Therefore, 
\[[\;\cdot\;]_m\geq \frac{\sqrt{6}}{\pi}r_{m+2},\]
and, by Lemma \ref{lem_frechet_subspaces_lstars}, $[\cdot]_m$ is a dominating norm on $F$. 
Moreover, we have
\[[xy,z]_m=\sum_{j=1}^\infty\langle xye_j,ze_j\rangle j^{-2m-2}=\sum_{j=1}^\infty\langle ye_j,x^*ze_j\rangle j^{-2m-2}=[y,x^*z]_m,\]
which completes the proof.
\end{proof}

\begin{definition}
A closed subspace $E$ of the space $s$ is called \emph{orthogonally complemented} in $s$ if there is a continuous projection $\pi$ in $s$ onto $E$ 
admitting the extension to the orthogonal projection in $\ell_2$.
Then we call $\pi$ an \emph{orthogonal projection} in $s$ onto $E$.   
\end{definition}

\begin{lemma}\label{lemma_iso_onto_orth_complemented}
Let $E$ be a Fr\'echet space isomorphic to a nuclear power series space of infinite type and let $||\cdot||$ be a dominating Hilbert norm on $E$. Then there is an orthogonally complemented subspace $G$
of $s$ and an isomorphism $w\colon E\to G$ of Fr\'echet spaces such that $||w\xi||_{\ell_2}=||\xi||$ for all $\xi\in E$.
\end{lemma}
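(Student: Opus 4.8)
The plan is to reduce everything to an explicit coordinate embedding of the model space into $s$. Since $E$ is isomorphic to a nuclear power series space of infinite type, write $E\cong\Lambda_\infty(\alpha)$, where $\sup_j\frac{\log j}{\alpha_j}<\infty$. By Theorem \ref{th-vogt-unitary-iso} there is an isomorphism $u\colon E\to\Lambda_\infty(\alpha)$ with $||u\xi||_{\ell_2}=||\xi||$ for all $\xi\in E$; note that $||\cdot||_{\ell_2}=|\cdot|_{\alpha,0}$ on $\Lambda_\infty(\alpha)$. It therefore suffices to construct an $\ell_2$-isometric isomorphism $\iota\colon\Lambda_\infty(\alpha)\to G$ onto an orthogonally complemented subspace $G$ of $s$, and then set $w:=\iota\circ u$. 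I would look for $\iota$ of the simplest possible form, namely a ``spreading'' of coordinates $\iota\big(\sum_j\xi_je_j\big):=\sum_j\xi_je_{n_j}$ for a strictly increasing sequence $(n_j)\subset\N$ to be chosen. Such a map is automatically $\ell_2$-isometric (the $e_{n_j}$ are orthonormal), its image $G=\{\eta\in s:\eta_k=0\text{ for }k\notin\{n_j:j\in\N\}\}$ is a coordinate subspace of $s$, and the coordinate projection onto the indices $\{n_j:j\in\N\}$ is a continuous projection of $s$ onto $G$ (so $G$ is closed, being its range) which extends to the orthogonal projection of $\ell_2$ onto $\overline{\lin}\{e_{n_j}\}$; thus $G$ is orthogonally complemented.

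The crux is choosing $(n_j)$ so that $\iota$ is simultaneously a topological isomorphism onto $G$, i.e.\ so that the $s$-norms $|\iota\xi|_q^2=\sum_j|\xi_j|^2n_j^{2q}$ are equivalent (as $q$ ranges) to the norms $|\xi|_{\alpha,q}^2=\sum_j|\xi_j|^2e^{2q\alpha_j}$. This holds as soon as $\log n_j$ is comparable to $\alpha_j$, say $\alpha_j\leq\log n_j\leq C\alpha_j$ for all $j$: indeed $n_j^{2q}\leq e^{2qC\alpha_j}$ gives $|\iota\xi|_q\leq|\xi|_{\alpha,\lceil Cq\rceil}$, while $e^{2q\alpha_j}\leq n_j^{2q}$ gives $|\xi|_{\alpha,q}\leq|\iota\xi|_q$, so both $\iota$ and $\iota^{-1}$ are continuous (the first estimate also shows $\iota$ maps into $s$). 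I would define $(n_j)$ greedily by $n_0:=0$ and $n_j:=\max\{n_{j-1}+1,\lceil e^{\alpha_j}\rceil\}$; this is strictly increasing and satisfies $\log n_j\geq\alpha_j$ by construction.

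The main obstacle --- and the only place nuclearity enters --- is the upper bound $\log n_j\leq C\alpha_j$, which could fail if the $\max$ repeatedly selects $n_{j-1}+1$ over long plateaus of $\alpha$. Here I would first prove by induction that $n_j\leq j+\lceil e^{\alpha_j}\rceil$ for all $j$ (the case $n_j=n_{j-1}+1$ uses monotonicity of $\alpha$), and then invoke nuclearity in the form $j\leq e^{M\alpha_j}$ with $M:=\max\{1,\sup_j\frac{\log j}{\alpha_j}\}$ to obtain $n_j\leq 3e^{M\alpha_j}$, hence $\log n_j\leq M\alpha_j+\log 3\leq C\alpha_j$ for a suitable constant $C$ (using $\alpha_j\geq\alpha_1>0$ to absorb the additive constant). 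With the comparison $\alpha_j\leq\log n_j\leq C\alpha_j$ established, the norm equivalences above show that $\iota$ is a topological isomorphism onto $G$, and since $||\iota\xi||_{\ell_2}=||\xi||_{\ell_2}=|\xi|_{\alpha,0}$, the composition $w=\iota\circ u\colon E\to G$ is the desired $\ell_2$-isometric isomorphism onto the orthogonally complemented subspace $G$.
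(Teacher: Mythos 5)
Your proposal is correct, and it takes a genuinely different route from the paper. The paper first invokes the abstract machinery: $E$ has (DN) and ($\Omega$), hence is isomorphic to a complemented subspace $F$ of $s$ with some projection $\pi$ (via \cite[Prop. 31.7]{MeV}); it then applies Vogt's Corollary 7.7 to produce an automorphism of $F$ adjusting the norm, and finally Vogt's Theorem 7.2 to find an $\ell_2$-isometric automorphism $v$ of $s$ conjugating $\pi$ into an orthogonal projection onto $G:=v(F)$. You instead apply Corollary 7.7 (Theorem \ref{th-vogt-unitary-iso}) once, directly to $E$, and then replace all of the remaining abstract input by an explicit coordinate ``spreading'' $e_j\mapsto e_{n_j}$ with $\log n_j\asymp\alpha_j$; your greedy choice of $(n_j)$ and the induction $n_j\leq j+\lceil e^{\alpha_j}\rceil$ combined with nuclearity ($\log j\leq M\alpha_j$) correctly yield the two-sided comparison $\alpha_j\leq\log n_j\leq C\alpha_j$, which gives both continuity of $\iota$ and of its inverse, surjectivity onto the coordinate subspace, and the $\ell_2$-isometry for free since the $e_{n_j}$ are orthonormal. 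What your approach buys is elementarity and concreteness: you avoid Vogt's Theorem 7.2 entirely and produce a $G$ that is a coordinate subspace with the obvious coordinate projection, so orthogonal complementedness is immediate rather than obtained by conjugation. What it gives up is generality: the construction exploits the diagonal structure of $\Lambda_\infty(\alpha)$, whereas the paper's argument pattern would survive for any complemented subspace of $s$ carrying a dominating Hilbert norm, not only those with a basis. Since the lemma's hypothesis is precisely that $E$ is isomorphic to a nuclear power series space of infinite type, this loss is immaterial here, and your proof is complete as written.
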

\begin{proof}
Since $E$ is isomorphic to a nuclear power series space of infinite type, by \cite[Lemma 29.2(3) $\&$ Lemma 29.11(3)]{MeV}, $E$ has 
the properties (DN) and ($\Omega$). Hence, by \cite[Prop. 31.7]{MeV}, $E$ is isomorphic to a complemented subspace of $s$.
This means that there is a complemented subspace $F$ of $s$ with a continuous projection $\pi\colon s\to F$ and a Fr\'echet space isomorphism 
$\psi\colon E\to F$. 
Hence, $||\cdot||_\psi\colon F\to[0,\infty)$ defined by $||\xi||_\psi:=||\psi^{-1}\xi||$ is a dominating Hilbert norm on $F$. 
Since, $||\cdot||_{\ell_2}$ is also a dominating Hilbert norm on $F$, by \cite[Cor. 7.7]{V3}, there is an automorphism $u$ of $F$ such that 
$||u\xi||_{\ell_2}=||\xi||_\psi$ for all $\xi\in F$.
Moreover, by \cite[Th. 7.2]{V3}, there is an automorphism $v$ of $s$ such that $\rho:=v\pi v^{-1}$ is the orthogonal projection in $s$ 
onto $G:=v(F)$ and a simple analysis of the proof of \cite[Th. 7.2]{V3} shows that $||v\xi||_{\ell_2}=||\xi||_{\ell_2}$ for all $\xi\in F$. 
Therefore, the operator $w:=vu\psi$ has the desired properties. 
\end{proof}

\begin{lemma}\label{lemma_lstarE}
Let $E$ be a Fr\'echet space isomorphic to a nuclear power series space of infinite type and let $||\cdot||$ be a dominating Hilbert norm on $E$.  
Let $H$ denote the completion of $E$ in the norm $||\cdot||$.  
Then there is a map $\phi\in\cL(H,\ell_2)$ and an orthogonally complemented subspace $G$ of $s$ such that 
\begin{enumerate}[\upshape(i)]
 \item $\phi(E)=G$;
 \item $\phi^*(s)=E$;
 \item $||\phi\xi||_{\ell_2}=||\xi||$ for all $\xi\in E$;
 \item ${\phi\phi^*}$ is the orthogonal projection in $\ell_2$ with $\phi\phi^*(s)=G$. 
\end{enumerate}
Moreover, the map 
\[\Phi\colon\cL^*(E,||\cdot||)\to\lstars,\quad x\mapsto \phi x\phi^*,\] 
is a continuous injective ${}^*$-algebra homomorphism with $\im\Phi=\cL^*(G)$ and the map 
\[P\colon\lstars\to\cL^*(G),\quad x\mapsto \phi\phi^*x\phi\phi^*,\] 
is a continuous projection onto $\cL^*(G)$.
\end{lemma}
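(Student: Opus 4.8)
The plan is to build $\phi$ by extending the isometry supplied by Lemma \ref{lemma_iso_onto_orth_complemented}. First I would apply that lemma to obtain an orthogonally complemented subspace $G$ of $s$, together with a Fr\'echet space isomorphism $w\colon E\to G$ satisfying $||w\xi||_{\ell_2}=||\xi||$ for all $\xi\in E$; by the definition of orthogonal complementation there is an orthogonal projection $\rho\colon s\to G$ which extends to the orthogonal projection onto $\overline{G}$ (the $\ell_2$-closure of $G$) in $\ell_2$, and one checks $\rho(s)=G$. Since $w$ is isometric for the norms $||\cdot||$ and $||\cdot||_{\ell_2}$ and $E$ is dense in $H$, it extends uniquely to an isometry $\phi\colon H\to\ell_2$ with range $\overline{G}$; in particular $\phi\in\cL(H,\ell_2)$. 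From the isometry I record the two identities $\phi^*\phi=\id_H$ and that $\phi\phi^*$ is a self-adjoint idempotent, hence the orthogonal projection of $\ell_2$ onto $\overline{G}=\phi(H)$. By uniqueness of orthogonal projections, $\phi\phi^*$ agrees on $s$ with $\rho$.

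Next I would verify (i)--(iv). Property (iii) is immediate since $\phi|_E=w$, and then $\phi(E)=w(E)=G$ gives (i). Because $\phi^*=\phi^{-1}$ on $\overline{G}$ and $\phi^*=0$ on $\overline{G}^\perp$, for $\eta\in s$ one has $\phi^*\eta=w^{-1}(\rho\eta)\in E$, so $\phi^*(s)\subset E$; conversely $\phi^*(w\xi)=\xi$ for $\xi\in E$ with $w\xi\in G\subset s$, which gives (ii). Property (iv) is exactly the identification $\phi\phi^*=\rho$ together with $\rho(s)=G$.

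For the ``moreover'' part, I would first check that $\Phi$ lands in $\lstars$ and is a ${}^*$-homomorphism. For $x\in\cL^*(E,||\cdot||)$ the composition $s\xrightarrow{\phi^*}E\xrightarrow{x}E\xrightarrow{\phi}G\subset s$ shows that $\phi x\phi^*$ maps $s$ into $s$; a direct computation of adjoints, using $\phi^*(s)=E$ and $x^*\in\cL^*(E,||\cdot||)$, yields $(\phi x\phi^*)^*=\phi x^*\phi^*$, so $(\phi x\phi^*)^*(s)\subset G\subset s$ and hence $\phi x\phi^*\in\lstars$ with $\Phi(x)^*=\Phi(x^*)$. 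Multiplicativity follows from $\phi^*\phi=\id_H$, since $\phi x\phi^*\phi y\phi^*=\phi xy\phi^*$ on $s$; injectivity follows because $\phi$ is injective and $\phi^*(s)=E$, so $\phi x\phi^*=0$ forces $x=0$ on $E$.

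Finally, for $\im\Phi=\cL^*(G)$ and the projection $P$, I would note that $x\mapsto wxw^{-1}$ is a ${}^*$-isomorphism $\cL^*(E,||\cdot||)\to\cL^*(G)$ (conjugation by the isometric isomorphism $w$) and that $\Phi(x)=(wxw^{-1})\rho$, so $\im\Phi=\{y\rho:y\in\cL^*(G)\}$, which is precisely the canonical copy of $\cL^*(G)$ inside $\lstars$. For $P$ I would use $\phi\phi^*=\rho$ and $\rho^2=\rho$ to write $P(x)=\rho x\rho=\Phi(\phi^*x\phi)$ with $\phi^*x\phi\in\cL^*(E,||\cdot||)$, whence $\im P\subset\im\Phi$; that $P$ restricts to the identity on $\im\Phi$ follows again from $\phi^*\phi=\id_H$, so $P$ is a projection onto $\im\Phi=\cL^*(G)$. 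I expect the main obstacle to be the continuity of $\Phi$ and $P$ in the topology $\tau^*$: here I would invoke Propositions \ref{prop-top-power-series} and \ref{prop-top-Lstar-power-series} to describe $\tau^*$ by the seminorms $p_{n,B}$, then use that $\phi^*\colon s\to E$ is continuous (so it carries bounded sets to bounded sets) and that $\phi=w$ on $E$ is continuous into $s$ (so each $|w\,\cdot\,|_n$ is dominated by some $||\cdot||_{k(n)}$) to estimate $p_{n,B}(\Phi(x))$ by a single seminorm of $x$; the adjoint component is handled symmetrically through $(\phi x\phi^*)^*=\phi x^*\phi^*$, and continuity of $P$ then follows from $P=\Phi\circ\Psi$ with $\Psi(x)=\phi^*x\phi$ treated the same way.
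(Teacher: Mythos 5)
Your proposal is correct and follows essentially the same route as the paper: both start from Lemma \ref{lemma_iso_onto_orth_complemented}, take $\phi$ to be the extension of $w$ to an isometry $H\to\ell_2$ onto $\overline{G}$ (the paper writes this as $\iota w$), exploit $\phi^*\phi=\id_H$ and the fact that $\phi\phi^*$ is the orthogonal projection onto $\overline{G}$ to verify (i)--(iv) and the ${}^*$-homomorphism properties, and prove continuity of $\Phi$ and $P$ via the seminorms $p_{n,B}$ of Proposition \ref{prop-top-Lstar-power-series} together with the continuity of $\phi\colon E\to s$ and $\phi^*\colon s\to E$. The only cosmetic difference is that you obtain these last two continuity statements directly from $w$, $\rho$ and the inclusion $G\hookrightarrow s$, whereas the paper invokes the closed graph theorem.
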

\begin{proof}
By Lemma \ref{lemma_iso_onto_orth_complemented}, there is an orthogonally complemented subspace $G$ of $s$ and an isomorphism $w\colon E\to G$ of Fr\'echet spaces such that $||w\xi||_{\ell_2}=||\xi||$ 
for all $\xi\in E$. Let $\rho\colon s\to s$ be the orthogonal projection onto $G$. The operators $w$, $\rho$ and the identity map $\iota\colon G\hookrightarrow s$ can be extended to the continuous linear operators 
between Hilbert spaces (for simplicity denoted by the same symbols): $w\colon H\to\overline{G}$, $\rho\colon\ell_2\to\ell_2$ and $\iota\colon\overline{G}\hookrightarrow\ell_2$, 
where $\overline{G}$ is the closure of $G$ in $\ell_2$.
Therefore, the Hermitian adjoints $w^*$ and $\iota^*$ of the operators $w$ and $\iota$ are well-defined. 
We have thus the following commutative diagram of continuous linear maps between Fr\'echet and Hilbert spaces

\begin{displaymath}
\xymatrix{
E             \ar @{->}[r]^w \ar@{^{(}->}[d] & G             \ar@{^{(}->}[r]^\iota\ar@{^{(}->}[d] & s        \ar@{^{(}->}[d]\\
H \ar @{->}[r]^w                 & \overline{G} \ar@{^{(}->}[r]^\iota                & \ell_2 &
}  
\end{displaymath}
and the diagram with the corresponding adjoint operators
\begin{displaymath}
\xymatrix{
\ell_2 \ar @{->}[r]^{\iota^*} \ar@{->}[rd]^{\rho} & \overline{G} \ar@{^{(}->}[d]^\iota \ar@{->}[r]^{w^*} & H\\
{}                                            & \ell_2.
}  
\end{displaymath}
It follows easily that $\iota^*\colon\ell_2\to \overline{G}$ is the orthogonal projection onto $\overline{G}$, whence
$\iota^*(s)=\rho(s)=G$. Moreover, $w^*(G)=E$. Indeed, if $(\cdot,\cdot)$ denotes the scalar product on $E$ corresponding to the Hilbert norm $||\cdot||$,
then
\[(w^*w\xi,\eta)=\langle  w\xi,w\eta\rangle=(\xi,\eta)\]
for all $\xi,\eta\in E$. 
Hence, $E$ being dense in $H$, $w^*w=\id_{H}$, and so $w^*(G)=E$. 
Consequently, we have the following commutative diagram
\begin{displaymath}
\xymatrix{
s \ar @{->}[r]^{\iota^*} \ar@{->}[rd]^{\rho} & G \ar@{^{(}->}[d]^\iota \ar@{->}[r]^{w^*} & E\\
{}                                            & s.
}  
\end{displaymath}

It is easy to check that $\phi:=\iota w$ satisfies conditions (i)--(iii) and a simple computation shows that $\phi\phi^*$ is a self-adjoint projection 
(and thus orthogonal) on $\ell_2$ with $\phi\phi^*(s)=G$. 
In consequence, $\Phi\colon\cL^*(E,||\cdot||)\to\lstars, x\mapsto \phi x\phi^*$, is an injective ${}^*$-homomorphism with $\im\Phi=\cL^*(G)$ and, moreover, 
$P\colon\lstars\to\cL^*(G), x\mapsto \phi\phi^*x\phi\phi^*$, is a projection. 

Now, we shall prove the continuity of $\Phi$.
Let $B$ be a bounded subset of $s$ and let $n\in\N_0$. By the closed graph theorem, $\phi\colon E\to s$ and $\phi^*\colon s\to E$ are continuous maps between Fr\'echet spaces.
Hence, there is a constant $C>0$ and a continuous norm 
$||\cdot||_E$ on $E$ such that $|\phi\xi|_n\leq C||\xi||_E$ for $\xi\in E$. Note also that the set $\phi^*(B)$ is bounded in the Fr\'echet space $E$. 
Therefore,
\[\max_{\xi\in B}\{|(\Phi x)\xi|_n,|(\Phi x)^*\xi|_n\}=\max_{\xi\in B}\{|\phi x\phi^*\xi|_n,|\phi x^*\phi^*\xi|_n\}\leq C\max_{\eta\in \phi^*(B)}\{||x\eta||_E,||x^*\eta||_E\},\]
which, by Proposition \ref{prop-top-Lstar-power-series}, gives the continuity of $\Phi$. The continuity of $P$ can be proved in a simillar way.
\end{proof}

\begin{corollary}\label{cor_lstarF_subset_lstars}
For every Fr\'echet space $E$ isomorphic to a nuclear power series space of infinite type and a dominating Hilbert norm $||\cdot||$ on $E$ there is an orthogonally complemented subspace $G$ of $s$ such that
$\cL^*(E,||\cdot||)\cong\cL^*(G)$ as topological ${}^*$-algebras. Moreover, the algebra $\cL^*(G)$ is a complemented ${}^*$-subalgebra of $\lstars$. 
\end{corollary}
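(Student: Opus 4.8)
The statement is almost entirely contained in Lemma \ref{lemma_lstarE}, so the plan is to extract it. First I would apply Lemma \ref{lemma_lstarE} to the given $E$ and $||\cdot||$, producing the map $\phi\in\cL(H,\ell_2)$, the orthogonally complemented subspace $G$ of $s$, the continuous injective ${}^*$-homomorphism $\Phi\colon\cL^*(E,||\cdot||)\to\lstars$, $x\mapsto\phi x\phi^*$, with $\im\Phi=\cL^*(G)$, and the continuous projection $P\colon\lstars\to\cL^*(G)$, $x\mapsto\phi\phi^*x\phi\phi^*$. Since $\im\Phi=\cL^*(G)$ is the image of a ${}^*$-homomorphism, it is a ${}^*$-subalgebra of $\lstars$, and the existence of the continuous projection $P$ onto it shows at once that $\cL^*(G)$ is a complemented ${}^*$-subalgebra of $\lstars$. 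This already settles the \emph{moreover} part.

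It remains to upgrade $\Phi$ from a continuous injective ${}^*$-homomorphism onto $\cL^*(G)$ to a \emph{topological} ${}^*$-isomorphism, i.e. to prove that $\Phi^{-1}\colon\cL^*(G)\to\cL^*(E,||\cdot||)$ is continuous. The natural candidate for the inverse is $\Psi\colon\lstars\to\cL^*(E,||\cdot||)$, $y\mapsto\phi^*y\phi$. To see that $\Psi|_{\cL^*(G)}=\Phi^{-1}$ I would use the relation $\phi^*\phi=\id_H$, which follows from $w^*w=\id_H$ together with $\iota^*\iota=\id_{\overline{G}}$, both recorded in the proof of Lemma \ref{lemma_lstarE}: this gives $\Psi\Phi x=(\phi^*\phi)x(\phi^*\phi)=x$ for $x\in\cL^*(E,||\cdot||)$ and, dually, $\Phi\Psi y=\phi\phi^*y\phi\phi^*=Py$, so that $\Psi$ restricted to $\cL^*(G)=\im\Phi=\im P$ is a genuine two-sided inverse of $\Phi$.

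The only real work, and the main (though routine) obstacle, is the continuity of $\Psi$, which I would establish by the same factorization-and-estimate scheme already used in Lemma \ref{lemma_lstarE} for $\Phi$ and $P$. By the closed graph theorem $\phi\colon E\to s$ and $\phi^*\colon s\to E$ are continuous maps between Fr\'echet spaces; hence for a bounded set $B\subset s$ the image $\phi(B)$ is bounded in $s$, and for each norm $||\cdot||_n$ from a fundamental sequence on $E$ there are a constant $C>0$ and an index $k$ with $||\phi^*\eta||_n\leq C|\eta|_k$. Using the description of $\tau^*$ from Proposition \ref{prop-top-Lstar-power-series}, for $y\in\lstars$ this yields a bound of the form $\sup_{\xi\in B}||\phi^*y\phi\xi||_n\leq C\sup_{\zeta\in\phi(B)}|y\zeta|_k$, and symmetrically for $(\phi^*y\phi)^*=\phi^*y^*\phi$, which is exactly an estimate of a generating $\tau^*$-seminorm of $\Psi(y)$ by a $\tau^*$-seminorm of $y$. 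Thus $\Psi$ is continuous, $\Phi^{-1}=\Psi|_{\cL^*(G)}$ is continuous, and $\Phi$ is the required topological ${}^*$-isomorphism $\cL^*(E,||\cdot||)\cong\cL^*(G)$. Alternatively one could avoid the explicit estimate and invoke an open mapping theorem, since $\cL^*(E,||\cdot||)$ is complete and the spaces involved are webbed and ultrabornological, but the direct computation is shorter and entirely parallel to the lemma.
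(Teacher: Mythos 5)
Your argument is correct and follows the paper's route exactly: the paper proves this corollary simply by citing Lemma \ref{lemma_lstarE}, and you extract the same data ($\phi$, $G$, $\Phi$, $P$) from that lemma. The one detail you add -- verifying via $\phi^*\phi=\id_H$ that $y\mapsto\phi^*y\phi$ is a continuous inverse on $\im\Phi=\cL^*(G)$, so that $\Phi$ is a \emph{topological} ${}^*$-isomorphism rather than merely a continuous bijective homomorphism -- is a genuine point the paper leaves implicit, and your direct seminorm estimate for it is sound (and preferable to the open-mapping alternative, which would need an independent reason for $\cL^*(E,||\cdot||)$ to be webbed, since Corollary \ref{cor_lstarE_nuclear_ultra_PLS} is deduced from the present statement).
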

\begin{proof}
This follows directly from Lemma \ref{lemma_lstarE}. 
\end{proof}

In our next corollary we deal with PLS-spaces, i.e. countable projective limits of strong duals of Fr\'echet-Schwartz spaces (see \cite{Dom04} for basic properties and examples).
\begin{corollary}\label{cor_lstarE_nuclear_ultra_PLS}
Let $E$ be a Fr\'echet space isomorphic to a nuclear power series space of infinite type and let $||\cdot||$ be a dominating Hilbert norm on $E$.
Then the space $\cL^*(E,||\cdot||)$ is a nuclear, ultrabornological $\mathrm{PLS}$-space. 
\end{corollary}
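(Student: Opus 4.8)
The plan is to reduce the whole statement to the single space $\lstars$ by means of the identification already obtained, and then to invoke permanence properties. Concretely, Corollary~\ref{cor_lstarF_subset_lstars} furnishes an orthogonally complemented subspace $G$ of $s$ together with a topological ${}^*$-algebra isomorphism $\cL^*(E,||\cdot||)\cong\cL^*(G)$, and it also records that $\cL^*(G)$ is a \emph{complemented} ${}^*$-subalgebra of $\lstars$. Forgetting the multiplicative structure, this says that $\cL^*(E,||\cdot||)$ is isomorphic, as a topological vector space, to a complemented subspace of $\lstars$. Hence it suffices to prove two things: that $\lstars$ itself is a nuclear, ultrabornological PLS-space, and that each of these three properties is inherited by complemented subspaces.

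For $\lstars$ the nuclearity and the ultrabornologicity are already recorded in \cite[Prop. 3.8 \& Cor. 4.2]{CiasPisz}, so the only point requiring work is the PLS property. Here I would start from the PLB-representation $\lstars=\operatorname{proj}_{N\in\N_0}\operatorname{ind}_{n\in\N_0}\cL(s_n,s_N)\cap\cL(s_{-N},s_{-n})$ proved above and show that each inner inductive limit is in fact a DFS-space, i.e. that its linking maps are compact. The decisive feature is the two-sided (intersection) constraint built into the Banach spaces $\cL(s_n,s_N)\cap\cL(s_{-N},s_{-n})$: an element there must be simultaneously bounded from $s_n$ to $s_N$ and from $s_{-N}$ to $s_{-n}$, which in coordinates forces rapid off-diagonal decay of its matrix and rules out the shift-type bounded families that obstruct compactness of a bare right-multiplication by a diagonal compact operator. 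I would make this quantitative by showing that, after composing with a single linking map, the unit ball of $\cL(s_n,s_N)\cap\cL(s_{-N},s_{-n})$ is uniformly approximable in the finer norm $r_{N,n+1}$ by its finite-rank truncations, which yields total boundedness and hence compactness of the linking maps. Once the inner $(LB)$-spaces are DFS, $\lstars$ is by definition a PLS-space.

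It then remains to transfer the three properties to the complemented subspace $\cL^*(G)$. Nuclearity passes to every closed subspace, so this case is immediate. A complemented subspace is in particular closed, and the class of PLS-spaces is stable under closed subspaces (see \cite{Dom04}); this settles the PLS property. For ultrabornologicity I would use that a complemented subspace is, through the complementary projection, a quotient of $\lstars$ by the closed kernel of that projection, together with the standard fact that quotients of ultrabornological spaces by closed subspaces are again ultrabornological (ultrabornological spaces are inductive limits of Banach spaces, and both inductive limits and quotients by closed subspaces preserve this class). Combining the three conclusions with the isomorphism $\cL^*(E,||\cdot||)\cong\cL^*(G)$ gives the corollary.

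I expect the genuine obstacle to be exactly the compactness of the linking maps of the inner inductive limits, that is, establishing directly that $\lstars$ is a PLS-space rather than merely a PLB-space; the intersection structure makes this true, but the truncation estimate is the only nonformal ingredient, and if one prefers it can be quoted from \cite{CiasPisz}. Everything after that is a routine application of permanence properties of nuclear, PLS- and ultrabornological spaces under complemented subspaces.
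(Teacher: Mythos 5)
Your argument is correct and follows essentially the same route as the paper: reduce via Corollary \ref{cor_lstarF_subset_lstars} to a complemented subspace of $\lstars$, quote that $\lstars$ is a nuclear ultrabornological PLS-space from \cite{CiasPisz}, and invoke permanence of all three properties under complemented subspaces. The only divergence is your optional sketch of a direct proof that the inner inductive limits are DFS-spaces, which the paper (like you ultimately suggest) simply cites from \cite{CiasPisz}.
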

\begin{proof}
By \cite[Prop. 3.8 \& Cor. 4.2]{CiasPisz}, $\lstars $ is a nuclear, ultrabornological $\mathrm{PLS}$-space. These properties are inherited by complemented subspaces 
(see \cite[Prop. 28.6]{MeV}, \cite[Ch. II, 8.2, Cor. 1]{HSch}, \cite[Prop. 1.2]{DomV}). 
Hence, the desired conclusion follows from Corollary \ref{cor_lstarF_subset_lstars}. 
\end{proof}

\begin{lemma}\label{lem-M_E}
Let $E$ be a unital \textnormal{DN}-algebra isomorphic as a Fr\'echet space to a nuclear power series space of infinite type
and let $||\cdot||:=\sqrt{(\cdot,\cdot)}$ be the corresponding Hilbert norm. 
Let  
\[\cM_E:=\{m_x\colon x\in E \},\]
where $m_x\colon E\to E$, $m_xy:=xy$, denotes the left multiplication map for the element $x$.
Then $\cM_E$ is a complemented ${}^*$-subalgebra of $\cL^*(E,||\cdot||)$ and $E$ is isomorphic as a Fr\'echet ${}^*$-algebra to $\cM_E$. 
\end{lemma}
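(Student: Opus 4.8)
The plan is to show that the left-regular representation $\mu\colon E\to\cL^*(E,||\cdot||)$, $\mu(y):=m_y$, is a topological ${}^*$-isomorphism onto $\cM_E$ and that its image is the range of a continuous idempotent, with the unit of $E$ doing the decisive work on both counts. First I would verify that $\mu$ is a well-defined injective ${}^*$-homomorphism. Each $m_y$ is a linear map $E\to E$, and condition ($\alpha$), rewritten as $(m_yz,w)=(z,m_{y^*}w)$, shows that every $w\in E$ lies in $\cD(m_y^*)$ with $m_y^*w=y^*w=m_{y^*}w\in E$; hence $E\subset\cD(m_y^*)$, $m_y^*(E)\subset E$, so $m_y\in\cL^*(E,||\cdot||)$ and the restriction of the Hilbert adjoint satisfies $(m_y)^*=m_{y^*}$. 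Multiplicativity $m_{yz}=m_ym_z$ is immediate from associativity, and injectivity follows from $m_y(1)=y\cdot 1=y$, where $1$ denotes the unit; in particular $\cM_E=\mu(E)$ is a ${}^*$-subalgebra since $m_ym_z=m_{yz}$ and $(m_y)^*=m_{y^*}$.

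Next I would prove that $\mu$ is $\tau^*$-continuous. By Proposition \ref{prop-top-Lstar-power-series}, which applies because $E$ is isomorphic to a nuclear power series space of infinite type and $||\cdot||$ is a dominating Hilbert norm, the topology $\tau^*$ is generated by the seminorms $p_{n,B}(x)=\max\{\sup_{\xi\in B}||x\xi||_n,\sup_{\xi\in B}||x^*\xi||_n\}$ with $n\in\N$, $B\in\cB_E$, so that $p_{n,B}(m_y)=\max\{\sup_{\xi\in B}||y\xi||_n,\sup_{\xi\in B}||y^*\xi||_n\}$. Since $E$ is a Fr\'echet ${}^*$-algebra, its separately continuous multiplication is in fact jointly continuous (a separately continuous bilinear map on a product of Fr\'echet spaces is continuous), so for each $n$ there are $k$ and $C>0$ with $||y\xi||_n\le C||y||_k||\xi||_k$; combining this with the boundedness of $B$ and the continuity of the involution bounds $p_{n,B}(m_y)$ by a continuous seminorm of $y$, giving the continuity of $\mu$.

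The decisive observation is that evaluation at the unit, $\operatorname{ev}\colon\cL^*(E,||\cdot||)\to E$, $\operatorname{ev}(x):=x1$, is $\tau^*$-continuous: choosing the bounded singleton $B=\{1\}$ yields $||x1||_n\le p_{n,\{1\}}(x)$ for every $n$. Since $\operatorname{ev}\circ\mu=\id_E$, the continuous injection $\mu$ is a topological embedding, whence $E$ is isomorphic as a Fr\'echet ${}^*$-algebra to $\cM_E$, which is complete and therefore closed in $\cL^*(E,||\cdot||)$. Finally, $Q:=\mu\circ\operatorname{ev}\colon x\mapsto m_{x1}$ is continuous, maps into $\cM_E$, and restricts to the identity on $\cM_E$ because $Q(m_y)=m_{m_y(1)}=m_y$; thus $Q$ is a continuous projection onto $\cM_E$, proving that $\cM_E$ is complemented.

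The only genuinely delicate points are the bookkeeping in the two continuity estimates; the conceptual heart — and the step that makes complementation almost automatic — is the realization that unitality lets the single continuous map $\operatorname{ev}$ serve both as a one-sided inverse to $\mu$ and, after composition with $\mu$, as the required projection. I expect the main thing to watch is the legitimacy of the joint continuity of multiplication underlying the continuity of $\mu$, which is precisely where the metrizable-complete structure of $E$ enters.
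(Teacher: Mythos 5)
Your proof is correct, and it takes a genuinely lighter route than the paper's on the two continuity points. You and the paper use the identical algebraic skeleton: condition ($\alpha$) gives $m_y\in\cL^*(E,||\cdot||)$ with $(m_y)^*{}_{\mid E}=m_{y^*}$, joint continuity of multiplication (automatic for separately continuous bilinear maps on Fr\'echet spaces) gives continuity of $y\mapsto m_y$ via the seminorms $p_{n,B}$, and the projection is the same map $Q\phi=m_{\phi(\mathbf{1})}$. The difference is how continuity of $Q$ and of the inverse $\cM_E\to E$ is obtained. The paper proves continuity of $Q$ by the closed graph theorem, which requires knowing that $\cL^*(E,||\cdot||)$ is an ultrabornological PLS-space (Corollary \ref{cor_lstarE_nuclear_ultra_PLS}), and then upgrades the continuous bijection $E\to\cM_E$ to an isomorphism by the open mapping theorem. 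You instead observe that evaluation at the unit, $\operatorname{ev}(x)=x\mathbf{1}$, is already $\tau^*$-continuous because $\{\mathbf{1}\}$ is a bounded set and $||x\mathbf{1}||_n\leq p_{n,\{\mathbf{1}\}}(x)$; since $\operatorname{ev}\circ\mu=\id_E$, this single estimate simultaneously makes $\mu$ a topological embedding and makes $Q=\mu\circ\operatorname{ev}$ a continuous projection (whose existence also forces $\cM_E=\ker(\id-Q)$ to be closed, so you do not even need the completeness remark). Your argument is more elementary and self-contained for this lemma, as it bypasses the closed graph and open mapping theorems and the structural input about $\cL^*(E,||\cdot||)$; the paper's heavier machinery is of course needed elsewhere, but not here. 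The one hypothesis your shortcut genuinely leans on is unitality, which the paper's closed-graph argument also uses, so nothing is lost in generality.
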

\begin{proof}
By assumption, 
\[(m_xy,z)=(xy,z)=(y,x^*z)=(y,m_{x^*}z)\]
for all $x,y,z\in E$, hence $E\subset\cD((m_x)^*)$ and ${(m_x)^*}_{\mid{E}}=m_{x^*}$. Consequently, $m_x\in\cL^*(E,||\cdot||)$ for all $x\in E$. 

Define $Q\colon\cL^*(E,||\cdot||)\to\cL^*(E,||\cdot||)$ by $Q\phi:=m_{\phi(\mathbf{1})}$, where $\mathbf{1}$ is the unit in $E$. Clearly, $Q$ is a projection onto $\cM_E$; we will show that $Q$ is continuous.
By Corollary \ref{cor_lstarE_nuclear_ultra_PLS} and the closed graph theorem (see e.g. \cite[Th. 24.31]{MeV}), every linear map on $\cL^*(E,||\cdot||)$ with closed graph is continuous. 
Assume that a net $(\phi_\lambda)_\lambda\subset\cL^*(E,||\cdot||)$ converges to 0,
$(Q\phi_\lambda)_{\lambda}$ converges to $\psi$ and both limits are taken in $\cL^*(E,||\cdot||)$. Let us fix $x\in E$.
By the continuity of the multiplication in $E$, there is $C>0$ and a continuous norm $||\cdot||_1$  on $E$ with $||yx||\leq C||y||_1$ for all $y\in E$.
Hence, we have
\begin{align*}
||\psi x||&\leq||(\psi-Q\phi_\lambda)x||+||Q\phi_\lambda x||=||(\psi-Q\phi_\lambda)x||+||\phi_\lambda(\mathbf{1})x||\\
&\leq ||(\psi-Q\phi_\lambda)x||+C||\phi_\lambda(\mathbf{1})||_1.
\end{align*}
By assumption, $||(\psi-Q\phi_\lambda)x||\to 0$ and $||\phi_\lambda(\mathbf{1})||_1\to 0$, which yields $\psi x=0$. Consequently, $\psi=0$ and $Q$ is continuous.

Finally, we should to show that $E$ is isomorphic as a topological ${}^*$- algebra to $\cM_E$ -- a complemented ${}^*$-subalgebra of $\cL^*(E,||\cdot||)$. 
Let us consider the map $\Phi\colon E\to\cM_E$, $\Phi x:=m_x$. By the above, it is clear that $\Phi$ is a ${}^*$-algebra
isomorphism. Let $B$ be a bounded subset of $E$ and let $||\cdot||_0$ be a continuous norm on $E$. 
Since the multiplication on $E$ is jointly continuous, there is $C_1>0$ and a 
continuous norm $||\cdot||_1$  on $E$ such that $||xy||_0\leq C_1||x||_1||y||_1$ for $x,y\in E$. Moreover, by the continuity of the involution, there is a constant $C_2\geq1$ and a continuous norm 
$||\cdot||_2$ on $E$ such that $||\cdot||_2\geq||\cdot||_1$ and $||x^*||_1\leq C_2||x||_2$ for $x\in E$. Hence,
\begin{align*}
\max\{\sup_{y\in B}||m_xy||_0,\sup_{y\in B}||(m_x)^*y||_0\}&=\max\{\sup_{y\in B}||xy||_0,\sup_{y\in B}||x^*y||_0\}\\
&\leq C_1\sup_{y\in B}||y||_1\max\{||x||_1,||x^*||_1\}\leq C_1C_2C_3 ||x||_2,
\end{align*}
where $C_3:=\sup_{y\in B}||y||_1<\infty$. This shows that $\Phi$ is continuous. Since $E$ and $\cM_E$ (as a complemented subspace of $\cL^*(E,||\cdot||)$, see also Corollary \ref{cor_lstarE_nuclear_ultra_PLS}) 
satisfy assumptions of the open mapping theorem \cite[Th. 24.30]{MeV}, the map $\Phi$ is an isomorphism of Fr\'echet ${}^*$-algebras, which completes the proof. 
\end{proof}

\proofmainA
The implication (i)$\Rightarrow$(ii) is trivial.

(ii)$\Rightarrow$(iii): Let $F$ be a closed ${}^*$-subalgebra of $\lstars$ such that $E\cong F$ as Fr\'echet ${}^*$-algebras and let $T\colon E\to F$ be the corresponding isomorphism.
By Lemma \ref{lem_frechet_subspaces_lstars}, there is $m\in\N_0$ such that $F\subset\cL(s_m,\ell_2)\cap\cL(\ell_2,s_{-m})$ and thus,
by Lemma \ref{lem_commutative_subalgebras_lstars}, $[\;\cdot\;]_m=\sqrt{[\cdot,\cdot]_m}$ is a dominating Hilbert norm on $F$ such that
$[xy,z]_m=[y,x^*z]_m$ for all $x,y,z\in F$.
Let $(\cdot,\cdot)\colon E\times E\to\C$, $(x,y):=[Tx,Ty]_m$. Then, clearly, $(xy,z)=(y,x^*z)$ for all $x,y,z\in E$
and $||\cdot||:=\sqrt{(\cdot,\cdot)}$ is a dominating Hilbert norm on $E$, hence $E$ is a DN-algebra.

(iii)$\Rightarrow$(i):  
By Lemma \ref{lem-M_E}, $E$ is isomorphic to a complemented ${}^*$-subalgebra of $\cL^*(E,||\cdot||)$ and, by Corollary \ref{cor_lstarF_subset_lstars}, 
$\cL^*(E,||\cdot||)$ is isomorphic to a complemented ${}^*$-subalgebra of $\lstars$, which proves the theorem. 
\bqed

\proofmainB
Clearly, (i) implies (ii).

(ii)$\Rightarrow$(iii): Let $\Phi\colon E\to F$ be the isomorphism of the Fr\'echet ${}^*$-algebras $E$ and $F$. Since $F\subset \cL(\ell_2)$, by Lemma \ref{lem_commutative_subalgebras_lstars}, 
$[\;\cdot\;]_0$ is a Hilbert dominating norm on $F$ satisfying condition ($\alpha$). 
Consequently, $||\cdot||:=[\Phi(\cdot)]_0$ is a Hilbert dominating norm  on $E$ and it satisfies condition ($\alpha$).

Next, for all $x\in E$ there is $C>0$ such that 
\[||(\Phi(xy))e_j||_{\ell_2}=||(\Phi x)((\Phi y)e_j)||_{\ell_2}\leq C||(\Phi y)e_j||_{\ell_2}\]
for all $y\in E$ and $j\in\N$. Hence, for all $x\in E$ there is $C>0$ such that
\begin{align*}
||xy||&=[\Phi(xy)]_0=\bigg(\sum_{j=1}^\infty||(\Phi(xy))e_j||^2_{\ell_2}j^{-2}\bigg)^{1/2}\leq C\bigg(\sum_{j=1}^\infty||(\Phi y)e_j||^2_{\ell_2}j^{-2}\bigg)^{1/2}\\
&=C[\Phi y]_0=C||y|| 
\end{align*}
for all $y\in E$, which gives condition ($\beta$) in Definition \ref{def-Hilbert-algebra}. 
This shows that $E$ is a $\beta\mathrm{DN}$-algebra.

(iii)$\Rightarrow$(i): Let $H$ be the completion of $E$ in the norm $||\cdot||$ and let $\cM_E:=\{m_x\colon x\in E \}$, where $m_x\colon E\to E$, $m_x(y):=xy$.
By Lemma \ref{lem-M_E}, $\cM_E$ is a complemented ${}^*$-subalgebra of $\cL^*(E,||\cdot||)$ isomorphic to $E$.
Moreover, by Lemma \ref{lemma_lstarE}, $\cL^*(E,||\cdot||)$ is isomorphic to a complemented ${}^*$-subalgebra of $\lstars$ via the map
\[\Phi\colon\cL^*(E,||\cdot||)\to\lstars,\quad x\mapsto \phi x\phi^*,\]
where $\phi\in\cL(H,\ell_2)$ and its adjoint $\phi^*\in\cL(\ell_2,H)$ satisfy conditions (i)--(iv) in Lemma \ref{lemma_lstarE}. 
Hence, the assigment $x\mapsto \phi m_x\phi^* $ defines an isomorphism (of Fr\'echet ${}^*$-algebras) between $E$ 
and a complemented ${}^*$-subalgebra of $\lstars$. Now, it is left to show that $\phi m_x\phi^*\in\cL(\ell_2)$ for each $x\in E$.
By assumption, for each $x\in E$, the map $m_x\colon (E,||\cdot||)\to(E,||\cdot||)$ is continuous. Consequently, for each $x\in E$ there are $C_1,C_2>0$ such that 
\[||\phi m_x\phi^*\xi||_{\ell_2}=||m_x\phi^*\xi||\leq C_1||\phi^*\xi||\leq C_2||\xi||_{\ell_2}\]
for all $\xi\in\ell_2$, and the proof is complete.
\bqed

\begin{remark}\label{rem-noncommutative} 
The proofs of implication (iii)$\Rightarrow$(i) in Theorems \ref{th_complemented_subalg_lstars} and \ref{th-complemented-bounded} work also when $E$ is not commutative. 
The implication (ii)$\Rightarrow$(iii) in Theorems \ref{th_complemented_subalg_lstars} and \ref{th-complemented-bounded} holds for any
commutative Fr\'echet ${}^*$-algebra.
\end{remark}

\section{Examples}\label{sec-examples}

In this section we present several examples of classes of commutative Fr\'echet ${}^*$-algebras which can be embedded into 
$\lstars$ as complemented ${}^*$-subalgebras consisting of bounded operators on $\ell_2$.
In the case of unital algebras, in view of Theorem \ref{th-complemented-bounded}, it is enough to show that a given Fr\'echet ${}^*$-algebra is isomorphic to a complemented subspace of 
$s$ with Schauder basis (i.e. to a nuclear power series space of infinite type) and admits a Hilbert dominating norm satisfying conditions ($\alpha$) and ($\beta$) in Definition \ref{def-Hilbert-algebra}.
Nonunital algebras will be extended in a natural way to unital ones .
At the end of this section we also give one interesting counterexample.

By $\cE(K)$ we denote the space of (complex-valued) Whitney jets on a compact set $K\subset\R^n$,  
\[\cE(K):=\big\{({\partial^{\alpha}F}_{\mid{K}})_{\alpha\in\N_0^n}: F\in C^\infty(\R^n)\big\}.\]
The space $\cE(K)$ thus consists of some special sequences  $f=\big(f^{(\alpha)}\big)_{\alpha\in\N_0^n}$ of continous functions on the set $K$.
The Fr\'echet space topology on $\cE(K)$ is given by the system of seminorms $(||\cdot||_m)_{m\in\N}$ defined, for example, in \cite[Section 2]{Fr07};
here let us only note that 
\[\sup\{|f^{(\alpha)}(x)|:\;x\in K,|\alpha|\leq m\}\leq||f||_m\] 
for all $m\in\N_0$ and $f\in \cE(K)$. The space $\cE(K)$ is a Fr\'echet ${}^*$-algebra where the product $fg$ of $f,g\in\cE(K)$ is defined 
by the Leibniz rule, i.e.,
\[(fg)^{(\alpha)}:=\sum_{\beta\leq\alpha}\binom{\alpha}{\beta}f^{(\beta)}g^{(\alpha-\beta)}\]
for $\alpha\in\N_0^n$ (see also \cite[p. 133]{Fr07}). As involution we clearly take the pointwise conjugation, $\overline{f}:=\big(\overline{f^{(\alpha)}}\big)_{\alpha\in\N_0^n}$. 
We say that a compact set $K\subset\R^n$ has \emph{the extension property} if there exists a continuous linear operator 
$E:\cE(K)\to C^\infty(\R^n)$ such that $\partial^\alpha(Ef)_{\mid{K}}=f^{(\alpha)}$ for every $\alpha\in\N_0^n$. 
M. Tidten showed in \cite[Folgerung 2.4]{Tid} that a compact set $K\subset\R^n$ has the extension
property if and only if $\cE(K)$ has the property (DN).

All ${}^*$-algebras of smooth functions considered below are endowed with pointwise multiplication and conjugation. The algebra $H(\C)$ of entire functions is endowed with pointwise multiplication
and the involution $f\mapsto f^*$ defined by $f^*(z):=\overline{f(\overline{z})}$.

Let  
\[\cK_\infty:=\{(x_{ij})_{i,j\in\N}\in\C^{\N^2}\colon\sup_{i,j\in\N}|x_{ij}|(ij)^n<\infty\quad\textrm{for all }n\in\N_0\}\]
be the \emph{algebra of rapidly decreasing matrices} endowed with matrix multiplication and conjugation of the transpose as involution.
The algebra $\cK_\infty$ is isomorphic as the Fr\'echet ${}^*$-algebra to the algebra $\ldss$ of compact smooth operators. Moreover, it is isomorphic as a Fr\'echet space to the space $s$. 
For further information concerning the algebra $\cK_\infty$ we refer the reader to \cite{Cias13, Cias16, Cias17}.

\begin{theorem}\label{th-examples}
The following Fr\'echet ${}^*$-algebras are isomorphic to some complemented ${}^*$-subalgebra of $\lstars$ consisting of bounded operators on $\ell_2$.
\begin{enumerate}
 \item[\upshape{(i)}] The algebras $C^\infty(M)$ of smooth functions on compact second-countable smooth manifolds $M$ without boundary.
 \item[\upshape{(ii)}] The algebras $\cD(K)$ of smooth functions on $\R^n$ with support contained in compact sets $K\subset\R^n$ such that $\operatorname{int}K\neq\emptyset$.
 \item[\upshape{(iii)}] The algebra $\cS(\R^n)$ of smooth rapidly decreasing functions on $\R^n$.
 \item[\upshape{(iv)}] The algebras $\cE(K)$ of Whitney jets on compact sets $K\subset\R^n$ with the extension property admitting a Schauder basis.
 \item[\upshape{(v)}] The algebra $H(\C)$ of entire functions.
 \item[\upshape{(vi)}] Nuclear power series algebras $\Lambda_{\infty}(\alpha)$ of infinite type with pointwise multiplication and conjugation.
 \item[\upshape{(vii)}] The algebra $\cK_\infty$ of rapidly decreasing matrices.
\end{enumerate}
\end{theorem}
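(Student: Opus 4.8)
The plan is to treat all seven families by a single reduction. By Theorem~\ref{th-complemented-bounded} for the commutative members, and by the noncommutative form of the implication (iii)$\Rightarrow$(i) granted in Remark~\ref{rem-noncommutative} for the matrix algebra $\cK_\infty$, it suffices to show that each algebra $E$ — after adjoining a unit to the nonunital members $\cS(\R^n)$, $\cD(K)$, $\Lambda_\infty(\alpha)$ and $\cK_\infty$ — satisfies two things: \textbf{(a)} $E$ is isomorphic as a Fr\'echet space to a nuclear power series space of infinite type, and \textbf{(b)} $E$ is a $\beta\mathrm{DN}$-algebra, i.e.\ it carries a dominating Hilbert norm obeying conditions ($\alpha$) and ($\beta$) of Definition~\ref{def-Hilbert-algebra}. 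Unitizing is harmless for (a) because $\C\times\Lambda_\infty(\alpha)\cong\Lambda_\infty(\alpha)$; the adjoined unit is the constant function $\mathbf 1$ for the function algebras and the identity operator $I$ for $\cK_\infty$.

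For (a) I would invoke the known linear-topological classifications: $C^\infty(M)\cong s$ through the eigenbasis of $1+\Delta$ for an elliptic operator on $M$ (Weyl asymptotics fixing the exponents), $\cS(\R^n)\cong s$ through the Hermite basis, $H(\C)\cong\Lambda_\infty((j)_{j})$ through Taylor coefficients, and $\cD(K)\cong s$ by the standard analysis of $\cD(K)$ for $K$ with nonempty interior, while $\cK_\infty\cong s$ and $\Lambda_\infty(\alpha)$ require nothing. The one case needing assembly is $\cE(K)$: Tidten's theorem (\cite{Tid}) turns the extension property into property (DN), the restriction-of-jets surjection $C^\infty(\R^n)\to\cE(K)$ transports property ($\Omega$) from $C^\infty(\R^n)$, and with the assumed Schauder basis, Theorem~\ref{th-DN} together with the classification of spaces having (DN), ($\Omega$) and a basis identifies $\cE(K)$ with some $\Lambda_\infty(\alpha)$.

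For (b) the uniform idea is that in a commutative function $*$-algebra, where the involution is complex conjugation, any inner product $(f,g)=\int f\overline g\,\de\mu$ with $\mu$ a finite positive measure automatically satisfies ($\alpha$), since $\int (fg)\overline h\,\de\mu=\int g\,\overline{\overline f\,h}\,\de\mu=(g,f^*h)$, and satisfies ($\beta$) because the elements are bounded, giving $\|fg\|\le\|f\|_\infty\|g\|$. One must only choose $\mu$ so that this is a genuine norm: for $H(\C)$ I would integrate over a real segment, where $f^*(z)=\overline{f(\bar z)}$ restricts to conjugation and the identity theorem forces injectivity; for $C^\infty(M)$, $\cS(\R^n)$ and $\cD(K)$ I would realize the (unitized) algebra as a \emph{closed} $*$-subalgebra of $C^\infty(X)$ for a compact manifold $X$ (taking $X=M$, $X=S^n$ via one-point compactification, and $X=\T^n$ respectively) and let $\mu$ be the Riemannian volume. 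For the sequence algebra $\Lambda_\infty(\alpha)$ a diagonal weighted form $\sum_j w_j\xi_j\overline{\eta_j}$ plays the same role, and for $\cK_\infty$ the column-weighted Hilbert--Schmidt form $[x,y]=\sum_j\omega_j\langle xe_j,ye_j\rangle$ of Lemma~\ref{lem_commutative_subalgebras_lstars}, for which ($\alpha$) follows from $\langle xye_j,ze_j\rangle=\langle ye_j,x^*ze_j\rangle$ and ($\beta$) from $\|xy\|\le\|x\|_{\mathrm{op}}\|y\|$.

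The hard part is \emph{domination} of the norm just built, and I expect it to be the main obstacle. In the favourable cases it is a one-line Cauchy--Schwarz interpolation: when the fundamental seminorms come from powers of a positive self-adjoint operator $T$ (namely $1+\Delta$ on $C^\infty(M)$, the harmonic oscillator on $\cS(\R^n)$, multiplication by $1+|\xi|^2$ after Fourier transform on $\cD(K)$, or the diagonal/weight operator on $\Lambda_\infty(\alpha)$ and $\cK_\infty$) one has $\|T^qf\|^2=(T^{2q}f,f)\le\|T^{2q}f\|\,\|f\|$, i.e.\ $\|\cdot\|_q^2\le\|\cdot\|_0\,\|\cdot\|_{2q}$, so the base $L^2$-norm dominates; for $\Lambda_\infty(\alpha)$ and $\cK_\infty$ I would additionally check that the weights can be kept summable (so the adjoined unit has finite norm) yet decay slowly enough to preserve this interpolation. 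For the realizations inside $C^\infty(X)$ domination is inherited from the ambient $C^\infty(X)$, since property (DN) passes to closed subspaces. The two cases where I expect genuine work are $H(\C)$ and $\cE(K)$: for $H(\C)$ one must dominate the $L^2$-norm on a segment by the sup-norms on larger disks, which I would extract from the logarithmic convexity of $\rho\mapsto\max_{|z|=\rho}|f|$ (Hadamard three-circles); for $\cE(K)$, where the value map $f\mapsto f^{(0)}$ realizes $\cE(K)$ only as a \emph{quotient} of $C^\infty(\T^n)$ and is a norm precisely because the extension property forces the jet to be recovered from its boundary values, one must instead couple the extension operator with a Remez/Markov-type interpolation inequality on $K$ — the same kind of estimate that underlies Tidten's theorem — to bound the Fr\'echet seminorms by the $L^2$-norm of the values. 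This interpolation step is the technical heart of the whole result.
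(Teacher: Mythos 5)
Your overall architecture is the paper's: reduce everything to Theorem \ref{th-complemented-bounded} (plus Remark \ref{rem-noncommutative} for the noncommutative $\cK_\infty$), identify each algebra, after unitization where needed, as a nuclear power series space of infinite type, and exhibit a dominating Hilbert norm of integral type $\int f\overline{g}\,\de\mu$ (resp.\ a weighted $\ell_2$ or column Hilbert--Schmidt form), for which conditions ($\alpha$) and ($\beta$) are automatic. Within that frame you have two genuine and workable alternatives: for (i) the interpolation $||(I+\Delta)^qf||_{L_2}^2\leq||(I+\Delta)^{2q}f||_{L_2}\,||f||_{L_2}$ replaces the paper's transport of $||\cdot||_{\ell_2}$ through the eigenbasis isomorphism with $s$; and for (ii)--(iii) realizing $\cD(K)\oplus\C\mathbf{1}$ and $\cS(\R^n)\oplus\C\mathbf{1}$ as closed ${}^*$-subalgebras of $C^\infty(\T^n)$ and $C^\infty(S^n)$ and restricting the ambient dominating $L_2$-norm is cleaner than the paper's integration-by-parts computation in $\cD_1(K,B(a,r))$ and the tangent substitution, provided you actually verify that $\cS(\R^n)$ is the flat-at-a-point ideal of $C^\infty(S^n)$.

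The gaps are in (iv)--(vii), which is where the substance of the theorem lies. Most seriously, in (vi) (and hence in the diagonal part of (vii)) the advertised ``one-line Cauchy--Schwarz interpolation'' $||\cdot||_s^2\leq||\cdot||\cdot||\cdot||_{2s}$ is not available on the unitization $\Lambda_\infty(\alpha)\oplus\C\mathbf{1}$: the diagonal operators $d_t\xi=(e^{t\alpha_j}\xi_j)_j$ that generate the seminorms do not preserve the unitized algebra (they send $\mathbf{1}$ out of it), so the seminorms $\max\{\sup_j|x_j|e^{t\alpha_j},|\lambda|\}$ are not of the form $||T^t\cdot||$ for any positive operator acting on the space, and the inequality with index $2s$ is in fact false. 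Take $\lambda=1$, $x_j=-1$ for $j\leq N$ and $x_j=0$ for $j>N$: then $||x+\lambda||^2\approx N^{-1}$ while $||x+\lambda||_s=e^{s\alpha_N}$, so one is forced to take $t\geq 2s+c\gamma$ with $\gamma$ the nuclearity constant bounding $\log j/\alpha_j$. This index shift, and the need to control $|\lambda|$ and $\sup_j|x_j|e^{s\alpha_j}$ separately, is exactly why the paper's proof of (vi) is a page-long case analysis over the sets $A_1,A_2,A_3$; your parenthetical about keeping the weights summable yet slowly decaying does not touch the real obstruction, which is the interaction of the adjoined unit with the sequence. Second, for $H(\C)$ the Hadamard three-circles argument only proves that the \emph{sup}-norm on $[-1,1]$ is dominating; descending to $||\cdot||_{L_2[-1,1]}$ still requires a Bernstein--Markov inequality $||p||_{[-1,1]}\leq(n+1)||p||_{L_2[-1,1]}$ for polynomials of degree $n$, a quantitative Taylor-tail approximation, and the infimum trick --- the same machinery as in (iv) --- none of which appears in your sketch. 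Third, for $\cE(K)$ you explicitly leave the key estimate $|f|_0\leq C||f||_{2m}^{1/2}||f||_{L_2(\mu)}^{1/2}$ unproved; the paper obtains it from Frerick's theorem that $|\cdot|_0$ is dominating, the Bloom--Levenberg Bernstein--Markov measure, and Frerick's Jackson-type polynomial approximation. So the plan is sound and parts (i)--(iii) can be completed along your lines, but the domination estimates in (iv)--(vii) are either missing or rest on an interpolation mechanism that does not apply.
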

\begin{proof}

(i) We follow the general pattern of the reasoning of P. Michor \cite{Mich14}. Let us choose a Riemannian metric $g$ on $M$ and let $\mathrm{d}V$ be the volume element (density) 
on $(M,g)$. Let $\langle\cdot,\cdot\rangle_{L_2(M)}$ be the scalar product on $L_2(M)$ defined by
\[\langle f,g\rangle_{L_2(M)}:=\int_M f\overline{g}\mathrm{d}V\] and let $||\cdot||_{L_2(M)}$ denote the corresponding Hilbert norm on $L_2(M)$. 
By the Sturm-Liouville decomposition \cite[pp. 139--140]{Chav84}, eigenfunctions $(u_k)_{k\in\N}$ of the Laplacian $\Delta$ induced by $g$
form an orthonormal basis of $L_2(M)$ and the sequence $(\lambda_k)_{k\in\N}$ of eigenvalues satisfies
\[0=\lambda_1<\lambda_2\leq\lambda_3\leq\ldots \text{ and }\lambda_k\to\infty.\]
Moreover, by the Weyl asymptotic formula \cite[Note III.15, p. 184]{Chav06}, there is a constant $C>0$ depending only on $n$ and the choice of a Riemannian metric such that
\begin{equation}\label{eq-Weyl}
\lambda_k\sim  Ck^{\frac{2}{n}}, 
\end{equation}
as $k\to\infty$. 

We claim that $(u_k)_{k\in\N}$ is a Schauder basis of $C^\infty(M)$ whose coefficient space is equal to the space $s$ of rapidly decreasing sequences. 
Indeed, for each $r\in\N$, the operator 
\[(I+\Delta)^r\colon H^{2r}(M)\to L_2(M)\]
is an isomorphism between the Sobolev space $H^{2r}(M)$ and $L_2(M)$. Therefore, since
\[(I+\Delta)^ru_k=(1+\lambda_k)^ru_k,\]
we have
\[u_k=(1+\lambda_k)^r(I+\Delta)^{-r}u_k\in H^{2r}(M)\]
for all $k,r\in\N$. Consequently each $u_k$ belongs to $\bigcap_{r\in\N}H^{2r}(M)=C^\infty(M)$. Next, since $(u_k)_{k\in\N}$ is an orthonormal basis of $L_2(M)$,
for each $f\in C^\infty(M)$ one can find a unique sequence $(a_k)_{k\in\N}$ of scalars such that $f=\sum_{k=1}a_ku_k$ and the series converges in the norm
$||\cdot||_{L_2(M)}$. In particular, for each $r\in\N$ there is a unique sequence $(a_{k,r})_{k\in\N}\subset\C^\N$ such that
\[(I+\Delta)^rf=\sum_{k=1}a_{k,r}u_k.\]
Since $(I+\Delta)^r$ is a symmetric unbounded operator on $L_2(M)$, we have
\[a_{k,r}=\langle(I+\Delta)^rf,u_k\rangle_{L_2(M)}=\langle f,(I+\Delta)^ru_k\rangle_{L_2(M)}=\langle f,(1+\lambda_k)^ru_k\rangle_{L_2(M)}=a_k(1+\lambda_k)^r,\]
whence
\[(I+\Delta)^rf=\sum_{k=1}a_k(1+\lambda_k)^ru_k\]
and the series converges in the norm $||\cdot||_{L_2(M)}$. Therefore, $(a_k(1+\lambda_k)^r)_{k\in\N}\in\ell_2$ for all $r\in\N$
and, by the Weyl asymptotic formula (\ref{eq-Weyl}), $(a_k(1+ Ck^{\frac{2}{n}})^r)_{k\in\N}\in\ell_2$ for all $r\in\N$, which yields $(a_k)_{k\in\N}\in s$.
Finally, it is a simple matter to show that for each $(a_k)_{k\in\N}\in s$ the series $\sum_{k=1}a_ku_k$ converges in $C^\infty(M)$.
Hence, $(u_k)_{k\in\N}$ is a Schauder basis of $C^\infty(M)$ with the coefficient space equal to $s$ as claimed. 

Now, $T\colon C^\infty(M)\to s$ defined by $Tu_k=e_k$ ($e_k$ denotes the $k$-th unit vector in $\C^\N$) is an isomorphism of Fr\'echet spaces such that 
$||Tf||_{\ell_2}=||f||_{L^2(M)}$ for $f\in C^\infty(M)$. Therefore, since $||\cdot||_{\ell_2}$ is a dominating norm on $s$, $||\cdot||_{L^2(M)}$ is a dominating norm on $C^\infty(M)$. 
Clearly, 
\[\langle fg,h\rangle_{L_2(M)}=\langle g,\overline{f}h\rangle_{L_2(M)}\quad\text{and}\quad ||fg||_{L_2(M)}\leq \sup_{x\in M}|f(x)|\cdot||g||_{L_2(M)}\]
for all $f,g,h\in C^\infty(M)$.
Hence, by Theorem \ref{th-complemented-bounded}, $C^\infty(M)$ is isomorphic to a complemented ${}^*$-subalgebra of $\lstars$ consisting of bounded operators on $\ell_2$.

(ii) Choose $a\in\R^n$ and $r>0$ such that $K$ is contained in the open ball $B(a,r)$ of radius $r$ centered at $a$.
Then, clearly, $\cD(K)$ is isomorphic as a Fr\'echet ${}^*$-algebra to the algebra  
\[\cD(K,B(a,r)):=\{f\in C^\infty(B(a,r)):\; \operatorname{supp}(f)\subset K\}\]
which is a closed subspace of the Fr\'echet space $C^\infty(\overline{B(a,r)})$ of smooth functions on $B(a,r)$ with uniformly continuous partial derivatives (see \cite[Ex. 28.9(5)]{MeV}). 
Let $\mathbf{1}$ be the constant function on $B(a,r)$, everywhere equals 1. Let $\cD_1(K,B(a,r))$ denote the linear span of $\cD(K,B(a,r))$ and $\mathbf{1}$. 
Then, clearly, $\mathbf{1}$ is the unit in the Fr\'echet ${}^*$-algebra $\cD_1(K,B(a,r))$. 
By \cite[Prop. 31.12]{MeV}, $\cD(K,B(a,r))$ is isomorphic as a Fr\'echet space to $s$, and so is $\cD_1(K,B(a,r))$. 

Now, we follow the proof of \cite[Lemma 31.10]{MeV}.
By \cite[Prop. 14.27]{MeV}, $(||\cdot||_m)_{m\in\N}$,  
\[||f||_m^2:=\sum_{|\alpha|\leq m}\int_{B(a,r)}|f^{(\alpha)}(x)|^2\mathrm{d}x=\sum_{|\alpha|\leq m}||f^{(\alpha)}||_{L_2(B(a,r))}^2,\]
is a fundamental sequence of norms on $C^\infty(\overline{B(a,r)})$, and thus on $\cD_1(K,B(a,r))$. Since 
\[(f+\lambda\mathbf{1})^{(\alpha)}=f^{(\alpha)}\]
for $|\alpha|>0$,
we have, by integration by parts and by the Cauchy-Schwartz inequality,
\begin{align*}
||(f+\lambda\mathbf{1})^{(\alpha)}||_{L_2(B(a,r))}^2&=\int_{B(a,r)}|(f+\lambda\mathbf{1})^{(\alpha)}|^2\mathrm{d}x
=\int_{B(a,r)}(f+\lambda\mathbf{1})^{(\alpha)}(\overline{f+\lambda\mathbf{1}})^{(\alpha)}\mathrm{d}x\\
&=(-1)^{|\alpha|}\int_{B(a,r)}(f+\lambda\mathbf{1})(\overline{f+\lambda\mathbf{1}})^{(2\alpha)}\mathrm{d}x\\
&\leq||f+\lambda\mathbf{1}||_{L_2(B(a,r))}||(f+\lambda\mathbf{1})^{(2\alpha)}||_{L_2(B(a,r))}
\end{align*}
for all $f+\lambda\mathbf{1}\in\cD_1(K,B(a,r))$ and $|\alpha|\leq m$.
Hence, for all $m\in\N_0$ there is a constant $C_m>0$ such that
\[||f+\lambda\mathbf{1}||^2_m\leq C_m||f+\lambda\mathbf{1}||_{L_2(B(a,r))}||f+\lambda\mathbf{1}||_{2m}\]
for all $f+\lambda\mathbf{1}\in\cD_1(K,B(a,r))$. Hence, $||\cdot||_{L_2(B(a,r))}$ is a dominating norm on $\cD_1(K,B(a,r))$, and the corresponding scalar product satisfies condition ($\alpha$) 
in Definition \ref{def-Hilbert-algebra}.
Futhermore,
\begin{align*}
||(f+\lambda\mathbf{1})(g+\mu\mathbf{1})||_{L_2(B(a,r))}&=\bigg(\int_{B(a,r)}|f(x)+\lambda|^2\,|g(x)+\mu|^2\mathrm{d}x\bigg)^{1/2}\\
&\leq\sup_{x\in B(a,r)}|f(x)+\lambda|\cdot||g+\mu\mathbf{1}||_{L_2(B(a,r))}. 
\end{align*}
Hence, $\cD_1(K,B(a,r))$ is a unital $\beta\mathrm{DN}$-algebra. 
By Theorem \ref{th-complemented-bounded}, $\cD_1(K,B(a,r))$ is isomorphic to a complemented ${}^*$-subalgebra of $\lstars$ consisting of bounded operators on $\ell_2$,
and so is $\cD(K)$.

(iii) It is well-known that the map 
\[\Phi\colon\cS(\R^n)\to\cD\bigg(\bigg[-\frac{\pi}{2},\frac{\pi}{2}\bigg]\bigg),\quad(\Phi f)(x_1,\ldots,x_n):=(\tan x_1,\ldots,\tan x_2),\]
is an isomorphism of Fr\'echet ${}^*$-algebras (see \cite[Ex. 29.5(3) and p. 402]{MeV}), hence the conclusion follows from the previous example.
Moreover, $||\cdot||\colon\cS(\R^n)\oplus\C\mathbf{1}\to[0,\infty)$, 
\[||f+\lambda\mathbf{1}||^2:=\int_{-\frac{\pi}{2}}^{\frac{\pi}{2}}\ldots\int_{-\frac{\pi}{2}}^{\frac{\pi}{2}}|f(\tan x_1,\ldots,\tan x_2)+\lambda|^2\mathrm{d}x_1\ldots\mathrm{d}x_n,\]
is a Hilbert dominating norm on $\cS(\R^n)\oplus\C\mathbf{1}$ satisfying conditions ($\alpha$) and ($\beta$).  

(iv) We shall show that there is a finite positive Borel measure $\mu$ on $K$ such that $||\cdot||_{L_2(\mu)}$,
$||f||_{L_2(\mu)}^2:=\int_{K}|f|^2\mathrm{d}\mu$,
is a dominating norm on $\cE(K)$. Then, since $||\cdot||_{L_2(\mu)}$ is a Hilbert algebra norm, we would get our conclusion.

By \cite[Th. 3.10]{Fr07}, the norm $|\cdot|_0$, $|f|_0:=\sup_{x\in K}|f(x)|$, is a dominating norm on $\cE(K)$.
This means that for all $k$ there is $l$ and $C>0$ such that
\begin{equation}\label{eq-DN0}
||f||_k\leq C||f||_l^{1/2}|f|_0^{1/2} 
\end{equation}
for all $f\in\cE(K)$.
If there were a finite positive Borel measure $\mu$ on $K$, $q\in\N$ and $C>0$ such that
\begin{equation}\label{eq-DNL2mu}
|f|_0\leq C||f||_q^{1/2}||f||_{L_2(\mu)}^{1/2} 
\end{equation}
for all $f\in\cE(K)$ then, by (\ref{eq-DN0}), we would get 
\[||f||_k\leq C||f||_l^{1/2}||f||_q^{\frac{1}{4}}||f||_{L_2(\mu)}^{\frac{1}{4}}
\leq C||f||_{\max\{l,q\}}^{\frac{3}{4}}||f||_{L_2(\mu)}^{\frac{1}{4}},\]
and hence $||\cdot||_{L_2(\mu)}$ would be a dominating norm on $\cE(K)$ (see \cite[Remark 3.2(ii)]{Fr07}). 
Our goal is thus to prove condition (\ref{eq-DNL2mu}).

In what follows, $C\geq1$ denotes a constant which can vary from line to line but depends only on $n$ and the set $K$. 
By the last line of the proof of \cite[Prop. 3.4]{BloomLev13}, 
there is a positive Borel measure $\mu$ on $K$ -- the so called Bernstein-Markov measure -- such that 
\begin{equation}\label{eq-BMproperty}
|p|_0\leq C\binom{n+j}{j}j^2||p||_{L_2(\mu)}\leq Cj^m||p||_{L_2(\mu)}.
\end{equation}
for all polynomials $p\in\C[x_1,\ldots,x_n]$ with $\deg(p)\leq j$ and where $m:=n+2$.

Let us fix $f\in\cE(K)$. By \cite[Cor. 4.4(i)]{Fr07}, for all $j\in\N$ there is a polynomial $p_j$ with $\deg(p_j)\leq j$ such that
\[|f-p_j|_0\leq Cj^{-2m}||f||_{2m}.\] 
Applying this inequality twice and also inequality (\ref{eq-BMproperty}), we obtain
\begin{align*}\label{eq-f0leq}
|f|_0&\leq|f-p_j|_0+|p_j|_0\leq C(j^{-2m}||f||_{2m}+|p_j|_0)\leq C(j^{-2m}||f||_{2m}+j^m||p_j||_{L_2(\mu)})\\
&\leq C[j^{-2m}||f||_{2m}+j^m(|f-p_j|_0+||f||_{L_2(\mu)})]\leq C(j^{-2m}||f||_{2m}+j^{-m}||f||_{2m}+j^m||f||_{L_2(\mu)})\\
&\leq C(j^{-m}||f||_{2m}+j^m||f||_{L_2(\mu)})
\end{align*}
for all $j\in\N$. Therefore, taking the infimum over $j\in\N$ in both sides of the above chain of inequalities and applying \cite[Lemma 4.5]{Fr07}, we obtain
\[|f|_0\leq C||f||_{2m}^{1/2}||f||_{L_2(\mu)}^{1/2}\]
for all $f\in\cE(K)$, and (\ref{eq-DNL2mu}) holds, as desired.

(v) We will show that the norm $||\cdot||_{L_2[-1,1]}$, which obviously satisfies conditions ($\alpha$) and ($\beta$) in Definition \ref{def-Hilbert-algebra}, is at the same time a dominating norm.
Let us recall that the Fr\'echet space topology on $H(\C)$ is given by the sequence of norms $(||\cdot||_r)_{r\in\N}$, $||f||_r:=\sup_{|z|=r}{|f(z)|}$. 
By Hadamard's three circle theorem, 
\begin{equation}\label{eq_hadamard}
||f||_r^2\leq||f||_1\;||f||_{r^2} 
\end{equation}
for all $r>1$ and all $f\in H(\C\setminus\{0\})$, where $H(\C\setminus\{0\})$ is 
the space of holomorphic functions on $\C\setminus\{0\}$.
Let us consider the map $\Psi\colon\C\setminus\{0\}\to\C$, $\Psi(z):=\frac{1}{2}(z+z^{-1})$ and let $E_r:=\Psi(\mathbb{T}_r)$, 
where $\mathbb{T}_r:=\{z\in\C:|z|=r\}$ for $r>0$. 
For $r>1$, one can show that $E_r$ is the ellipse with the semi-axes $a=\frac{1}{2}(r+r^{-1})$ and $b=\frac{1}{2}(r-r^{-1})$. Moreover, $E_1=[-1,1]$ and $E_{r^{-1}}=E_r$ for $r>0$.
Since $f\circ\Psi\in H(\C\setminus\{0\})$ for all $f\in H(\C)$, we have, by (\ref{eq_hadamard}), 
\[||f\circ\Psi||_r^2\leq||f\circ\Psi||_1\;||f\circ\Psi||_{r^2}.\]
Hence,
\[||f||_{E_r}^2\leq||f||_{[-1,1]}\;||f||_{E_{r^2}}\]
for all $r>1$ and all $f\in H(\C)$, where $||f||_E$ is the supremum norm on a subset $E$ of $\C$. Since the ellipses $E_r$ contain arbitrary big circles, $||\cdot||_{[-1,1]}$ is a dominating norm on $H(\C)$.

Let $(Q_k)_{k\in\N_0}$ be the sequence of Legendre polynomials,
\[Q_k(x):=\bigg(\frac{2k+1}{2}\bigg)^{1/2}\frac{1}{2^kk!}\frac{\mathrm{d}^k}{\mathrm{d}x^k}(x^2-1)^k\]
for $x\in[-1,1]$. Then, for each $n\in\N$, $(Q_k)_{0\leq k\leq n}$ is an orthonormal basis of the space $\cP_n$ of complex-valued polynomials of 
degree at most $n$ in one real variable endowed with the scalar product $\langle p,q \rangle:=\int_{-1}^1 p(x)\overline{q(x)}\mathrm{d}x$. 
It is well-known that 
\[|Q_k(x)|\leq\bigg(\frac{2k+1}{2}\bigg)^{1/2}\]
for all $x\in[-1,1]$. Therefore, if $p\in\cP_n$, $p=\sum_{k=0}^n c_kQ_k$ for some scalars $c_k$'s, then 
\[|p(x)|\leq\left(\sum_{k=0}^n|Q_k(x)|^2\right)^{1/2}\left(\sum_{k=0}^n |c_j|^2\right)^{1/2}\leq(n+1)||p||_{L_2[-1,1]}\]
for every $x\in[-1,1]$. Consequently,
\begin{equation}\label{eq_HC1}
||p||_{[-1,1]}\leq(n+1)||p||_{L_2[-1,1]} 
\end{equation}
for every $p\in\cP_n$. This shows that the Lebesgue measure is a Bernstein-Markov measure on the interval $[-1,1]$.

Now, let us take $f\in H(\C)$, $f=\sum_{j=0}^\infty a_jz^j$. Let $p_n:=\sum_{j=0}^n a_jz^j$ for $n\in\N_0$. Then, for every $n\in\N_0$ and $x\in[-1,1]$, 
we have
\begin{align*}
|f(x)-p_n(x)|&=\bigg|\sum_{j=n+1}^\infty a_jx^j\bigg|=\bigg|\sum_{j=n+1}^\infty a_jx^{j-n-1}\bigg|\cdot|x|^{n+1}\\
&\leq\frac{1}{(n+1)!}\bigg|\sum_{j=0}^\infty (j+n+1)\ldots(j+1)a_{j+n+1}x^j\bigg|
=\frac{1}{(n+1)!}|f^{(n+1)}(x)|\\
&=\frac{1}{2\pi}\bigg|\int_{|z-x|=2}\frac{f(z)}{(z-x)^{n+2}}\mathrm{d}z\bigg|
\leq\frac{1}{2^{n+1}}\sup_{|z-x|=2}|f(z)|\leq\frac{1}{2^{n+1}}\sup_{|z|\leq3}|f(z)|.
\end{align*}
Hence,
\begin{equation}\label{eq_HC2}
||f-p_n||_{[-1,1]}\leq\frac{1}{2^{n+1}}||f||_3. 
\end{equation}

Now, by applying computations from item (iv), we may derive from inequalities (\ref{eq_HC1}) and (\ref{eq_HC2}) that $||\cdot||_{L_2[-1,1]}$ is a dominating norm
on $H(\C)$ as claimed.

(vi) Let us define
\[||\cdot||\colon\Lambda_{\infty}(\alpha)\oplus\C\mathbf{1}\to[0,\infty),\quad||x+\lambda||^2:=\sum_{j=1}^\infty|x_j+\lambda|^2j^{-2}\]
and
\[||\cdot||_t\colon\Lambda_{\infty}(\alpha)\oplus\C\mathbf{1}\to[0,\infty),\quad||x+\lambda||_t:=\max\{\sup_{j\in\N}|x_j|e^{t\alpha_j},|\lambda|\}\]
for $t\in\N$, where $\mathbf{1}:=(1,1,\ldots)$.
By nuclearity, $(||\cdot||_t)_{t\in\N}$ is a fundamental sequence of norms on $\Lambda_{\infty}(\alpha)\oplus\C\mathbf{1}$
and $||\cdot||$ is a well-defined continuous Hilbert norm on $\Lambda_{\infty}(\alpha)\oplus\C\mathbf{1}$.
Let $(\cdot,\cdot)$ denote the scalar product corresponding to the norm $||\cdot||$. An elementary computation shows that 
\[((x+\lambda)(y+\mu),z+\nu)=((y+\mu),(\overline{x+\lambda})(z+\nu))\]
for all $x+\lambda$, $y+\mu$, $z+\nu\in\Lambda_{\infty}(\alpha)\oplus\C\mathbf{1}$.
Moreover, we have
\[||(x+\lambda)(y+\mu)||=\bigg(\sum_{j=1}^\infty|(x_j+\lambda)(y_j+\mu)|^2j^{-2}\bigg)^{1/2}\leq\sup_{j\in\N}|x_j+\lambda|\,||y+\mu||\]
for all $x+\lambda$, $y+\mu\in\Lambda_{\infty}(\alpha)\oplus\C\mathbf{1}$. Hence, the norm $||\cdot||$ satisfies conditions ($\alpha$) and ($\beta$) in Definition \ref{def-Hilbert-algebra}.  

We will show that  $||\cdot||$ is a dominating norm on $\Lambda_{\infty}(\alpha)\oplus\C\mathbf{1}$, i.e. 
\begin{equation}\label{eq-DN-Lambda+1}
\forall s\in\N\;\exists t\in\N\;\exists C>0\;\forall x+\lambda\in\Lambda_{\infty}(\alpha)\oplus\C\mathbf{1}\quad ||x+\lambda||_s^2\leq C||x+\lambda||\cdot||x+\lambda||_t.
\end{equation}

Fix $s\in\N$, $x+\lambda\in\Lambda_{\infty}(\alpha)\oplus\C\mathbf{1}$ and denote 
\[R(t):=||x+\lambda||\cdot||x+\lambda||_t\] 
for $t\in\N$. 
Next, note that
\[\frac{\log(j+1)}{\alpha_j}\leq\frac{\log(ej^2)}{\alpha_j}=\frac{1}{\alpha_j}+2\frac{\log j}{\alpha_j}\]
for every $j\in\N$
and, by the nuclearity of $\Lambda_{\infty}(\alpha)$, there is a constant $\gamma\in\N$ such $\frac{1}{\alpha_j}+2\frac{\log j}{\alpha_j}\leq\gamma$ for all $j\in\N$.
Hence, $\frac{\log(j+1)}{\alpha_j}\leq\gamma$, and thus
\[\frac{e^{\gamma\alpha_j}}{j+1}\geq1\]
for all $j\in\N$.
We first claim that 
\begin{equation}\label{eq-lambda2}
|\lambda|^2\leq 4R(\gamma). 
\end{equation}
Indeed, since $|x_j+\lambda|\to|\lambda|$ as $j\to\infty$, the set $A:=\{j\in\N:\;|x_j+\lambda|<\frac{|\lambda|}{2}\}$ is finite or even empty. If $A=\emptyset$, i.e. $|x_j+\lambda|\geq\frac{|\lambda|}{2}$ for all $j\in\N$,
then 
\[R(\gamma)\geq|x_1+\lambda|\frac{|\lambda|}{2}\geq\frac{|\lambda|^2}{2}.\]

Now, assume that $A\neq\emptyset$ and let $j_0:=\max\{j\in\N:\;|x_j+\lambda|<\frac{|\lambda|}{2}\}$. Then, clearly, $|x_{j_0+1}+\lambda|\geq\frac{|\lambda|}{2}$, which gives
\[||x+\lambda||\geq|x_{j_0+1}+\lambda|(j_0+1)^{-1}\geq\frac{|\lambda|}{2}(j_0+1)^{-1}.\]
Next, $|x_{j_0}+\lambda|<\frac{|\lambda|}{2}$ implies that $|x_{j_0}|>\frac{|\lambda|}{2}$, and thus
\[||x+\lambda||_t\geq|x_{j_0}|e^{t\alpha_{j_0}}>\frac{|\lambda|}{2}e^{t\alpha_{j_0}}\]
for all $t\in\N$. Consequently,
\[R(\gamma)\geq\frac{|\lambda|}{2}(j_0+1)^{-1}\cdot\frac{|\lambda|}{2}e^{\gamma\alpha_{j_0}}=\frac{|\lambda|^2}{4}\frac{e^{\gamma\alpha_{j_0}}}{j_0+1}\geq\frac{|\lambda|^2}{4}\]
as claimed.

We next claim that $\sup_{j\in\N}|x_j|^2e^{2s\alpha_j}\leq 16R(2s+\gamma)$. Indeed, let $t:=2s+\gamma$ and let us divide the set of natural numbers into 3 pieces:
\begin{align*}
A_1&:=\bigg\{j\in\N:\;|x_j|\leq\frac{|\lambda|}{2}\bigg\},\\ 
A_2&:=\bigg\{j\in\N:\;\frac{|\lambda|}{2}<|x_j|\leq2|\lambda|\bigg\},\\
A_3&:=\{j\in\N:\;|x_j|>2|\lambda|\}.
\end{align*}
First take $k\in A_1$. Then $|x_k+\lambda|\geq\frac{|\lambda|}{2}$, and so
\begin{align}\label{eq-DN-A1}
\begin{split}
R(t)&\geq|x_k+\lambda|k^{-1}|x_k|e^{t\alpha_k}\geq\frac{|\lambda|}{2}|x_k|\frac{e^{t\alpha_k}}{k}\geq|x_k|^2\frac{e^{t\alpha_k}}{k}\geq|x_k|^2\frac{e^{t\alpha_k}}{k+1}
=|x_k|^2e^{(t-\gamma)\alpha_k}\frac{e^{\gamma\alpha_k}}{k+1}\\
&\geq|x_k|^2e^{(t-\gamma)\alpha_k}=|x_k|^2e^{2s\alpha_k}. 
\end{split}
\end{align}

Now, take $k\in A_2$. Clearly the set $A_2$ is finite. Let $k_0:=\max\{j\in\N:\;|x_j|>\frac{\lambda}{2}\}$. Then $|x_{k_0+1}|\leq\frac{|\lambda|}{2}$, and thus $|x_{k_0+1}+\lambda|\geq\frac{|\lambda|}{2}$.
Consequently, we get
\begin{align}\label{eq-DN-A2}
\begin{split}
R(t)&\geq|x_{k_0+1}+\lambda|(k_0+1)^{-1}|x_{k_0}|e^{t\alpha_{k_0}}\geq\frac{|\lambda|}{2}(k_0+1)^{-1}\frac{|\lambda|}{2}e^{t\alpha_{k_0}}=\frac{|\lambda|^2}{4}\frac{e^{t\alpha_{k_0}}}{k_0+1}\\
&\geq\frac{|x_k|^2}{16}\frac{e^{t\alpha_{k_0}}}{k_0+1}=\frac{|x_k|^2}{16}e^{(t-\gamma)\alpha_{k_0}}\frac{e^{\gamma\alpha_{k_0}}}{k_0+1}\geq\frac{|x_k|^2}{16}e^{2s\alpha_k}.
\end{split}
\end{align}

Finally, fix $k\in A_3$. Then $|x_k+\lambda|>\frac{|x_k|}{2}$, and thus
\begin{equation}\label{eq-DN-A3}
R(t)\geq|x_k+\lambda|k^{-1}|x_k|e^{t\alpha_k}\geq\frac{|x_k|^2}{2}\frac{e^{t\alpha_k}}{k}\geq\frac{|x_k|^2}{2}e^{(t-\gamma)\alpha_k}\frac{e^{\gamma\alpha_k}}{k+1}\geq\frac{|x_k|^2}{2}e^{2s\alpha_k}.
\end{equation}
Combining (\ref{eq-DN-A1})--(\ref{eq-DN-A3}), we get
\[\sup_{j\in\N}|x_j|^2e^{2s\alpha_j}\leq16R(2s+\gamma)\]
as claimed. Therefore, by (\ref{eq-lambda2}), 
\[||x+\lambda||_s^2\leq 16||x+\lambda||\cdot||x+\lambda||_{2s+\gamma},\]
for all $x+\lambda\in\Lambda_{\infty}(\alpha)\oplus\C\mathbf{1}$, and $||\cdot||$ is a dominating norm on $\Lambda_{\infty}(\alpha)\oplus\C\mathbf{1}$.
Consequently, $(\Lambda_{\infty}(\alpha)\oplus\C\mathbf{1},(\cdot,\cdot))$ is a $\beta$DN-algebra. 

Finally, by the above, $\Lambda_{\infty}(\alpha)\oplus\C\mathbf{1}$ has the property (DN) and, by \cite[Ex. 1, Ch. 29]{MeV}, it has the property ($\Omega$). Moreover, by
\cite[Prop. 28.7]{MeV}, the space $\Lambda_{\infty}(\alpha)\oplus\C\mathbf{1}$ is nuclear. Consequently, by \cite[Prop. 31.7]{MeV}, $\Lambda_{\infty}(\alpha)\oplus\C\mathbf{1}$ 
is isomorphic to a complemented subspace of $s$ with Schauder basis, i.e. to a nuclear power series space of infinite type. Now, the thesis follows from Theorem \ref{th-complemented-bounded}
and from a simple observation that $\Lambda_{\infty}(\alpha)$ is a complemented ${}^*$-subalgebra of $\Lambda_{\infty}(\alpha)\oplus\C\mathbf{1}$.

(vii) Let $\mathbf{1}$ be the identity matrix in $\C^{\N^2}$ and let $e_j$ be the $j$-th unit vector in $\C^\N$. 
We define on $\cK_\infty\oplus\C\mathbf{1}$ the Hilbert norm $||\cdot||$ by 
\[||x+\lambda||^2:=\bigg(\sum_{j\in\N}||(x+\lambda)e_j||_{\ell_2}^2j^{-2}\bigg)^{1/2}.\]
Clearly, $||\cdot||$ satisfies conditions ($\alpha$) and ($\beta$) in Definition \ref{def-Hilbert-algebra}.
Hence, by Theorem \ref{th-complemented-bounded} and Remark \ref{rem-noncommutative}, it is enough to show that $||\cdot||$ is a dominating norm, i.e.
\[\forall k\in\N\;\exists n\in\N\;\exists C>0\;\forall x+\lambda\in\cK_\infty\oplus\C\mathbf{1}\quad ||x+\lambda||_k^2\leq C||x+\lambda||\cdot||x+\lambda||_n,\]
where 
\[||x+\lambda||_k:=\max\{\sup_{i,j\in\N}|x_{ij}|(ij)^k,|\lambda|\}.\]

Let us fix $x+\lambda\in\cK_\infty\oplus\C\mathbf{1}$. Note that
\[||(x+\lambda)e_j||_{\ell_2}^2=||(x_{ij})_{i\in\N}+\lambda e_j||_{\ell_2}^2=\sum_{i\in\N\setminus\{j\}}|x_{ij}|^2+|x_{jj}+\lambda|^2\]
for every $j\in\N$, and thus
\[||x+\lambda||^2=\sum_{j\in\N}\bigg(\sum_{i\in\N\setminus\{j\}}|x_{ij}|^2+|x_{jj}+\lambda|^2\bigg)j^{-2}.\]
We have
\begin{align*}
\sup_{i,j\in\N,i\neq j}|x_{ij}|^2(ij)^{2k}&\leq\sup_{i,j\in\N,i\neq j}|x_{ij}|j^{-1}\cdot\sup_{i,j\in\N}|x_{ij}|(ij)^{2k+1}\\
&\leq\bigg(\sum_{i,j\in\N,i\neq j}|x_{ij}|^2j^{-2}\bigg)^{1/2}\cdot\sup_{i,j\in\N}|x_{ij}|(ij)^{2k+1}\\
&\leq\bigg(\sum_{j\in\N}\bigg(\sum_{i\in\N\setminus\{j\}}|x_{ij}|^2+|x_{jj}+\lambda|^2\bigg)j^{-2}\bigg)^{1/2}\cdot\max\{\sup_{i,j\in\N}|x_{ij}|(ij)^{2k+1},|\lambda|\}\\
&=||x+\lambda||\cdot||x+\lambda||_{2k+1}
\end{align*}
for all $k\in\N$.
Moreover, from (\ref{eq-DN-Lambda+1}) applied to the algebra $s\oplus\C\mathbf{1}$, it follows that for every $k\in\N$ there is $m\in\N$ and $C>0$ 
such that  
\begin{align*}
\max\{\sup_{j\in\N}|x_{jj}|^2j^{4k},|\lambda|^2\}&\leq C\bigg(\sum_{j\in\N}|x_{jj}+\lambda|^2j^{-2}\bigg)^{1/2}\max\{\sup_{j\in\N}|x_{jj}|j^{2m},|\lambda|\}\\
&\leq C\bigg(\sum_{j\in\N}\bigg(\sum_{i\in\N\setminus\{j\}}|x_{ij}|^2+|x_{jj}+\lambda|^2\bigg)j^{-2}\bigg)^{1/2}\cdot\max\{\sup_{i,j\in\N}|x_{ij}|(ij)^m,|\lambda|\}\\
&=C||x+\lambda||\cdot||x+\lambda||_m.
\end{align*}
Therefore, for all $k\in\N$ there is $n\in\N$ and $C>0$ such that
\[||x+\lambda||_k^2=C\max\bigg\{\sup_{i,j\in\N,i\neq j}|x_{ij}|^2(ij)^{2k},\max\big\{\sup_{j\in\N}|x_{jj}|^2j^{4k},|\lambda|^2\big\}\bigg\}
\leq C||x+\lambda||\cdot||x+\lambda||_n,\]
and thus $||\cdot||$ is a dominating norm on $\cK_\infty\oplus\C\mathbf{1}$, which completes the proof.
\end{proof}

\begin{remark}
(i) Every algebra $\cD(K)$ is a closed ${}^*$-subalgebra of $\cE(L)$ for any closed ball $L$ containing $K$ and thus, by Theorem \ref{th-examples}(ii), $\cD(K)$ is automatically
isomorphic to a \emph{closed} ${}^*$-subalgebra of $\lstars$ consisting of bounded operators on $\ell_2$. In Theorem \ref{th-examples}(iv),
we prove that such a ${}^*$-subalgebra can be choosen to be \emph{complemented}.

(ii) By \cite[Proposition 4.3]{CiasPisz}, $\lstars$ is isomorphic as a topological ${}^*$-algebra to the matrix algebra
\[\Lambda(\cA):=\bigg\{x=(x_{ij})\in\C^{\N^2}:\forall N\in\N\,\exists n\in\N\quad\sum_{i,j\in\N^2}|x_{ij}|\max\bigg\{\frac{i^N}{j^n},\frac{j^N}{i^n}\bigg\}<\infty\bigg\}\]
endowed with the so-called K\"othe-type PLB-space topology. Clearly, $\cK_\infty$ is a ${}^*$-subalgebra of $\Lambda(\cA)$. However, since $\cK_\infty$ is dense in $\Lambda(\cA)$,
this representation is not interesting for us.

We have also a simillar phenomenon in the case of the algebra $s$. The diagonal matrices from $\Lambda(\cA)$ give -- also in the topological sense -- exactly the space $s'$. 
In particular, the ${}^*$-algebra $s$ has a simple representation in $\Lambda(\cA)$. But $s$ is dense in $s'$, so this representation does not work for us.

In the proof of Theorem \ref{th-examples}, Examples (vi) and (vii), we see that the representations of $s$ and $\cK_\infty$ in $\lstars$ are much more sofisticated.
\end{remark}

Finally, we shall give an example of a unital commutative Fr\'echet ${}^*$-algebra isomorphic as a Fr\'echet space to $s$ which is not a DN-algebra. Let $A^\infty(\D)$ be the space of holomorphic 
functions on the open unit disc $\D$ which are smooth up to boundary. In other words, 
\[A^\infty(\D):=\{f\in A(\D): f^{(k)}\in A(\D)\text{ for all $k\in\N$}\},\]
where $A(\D)$ is the disc algebra. The space $A^\infty(\D)$ admits a natural Fr\'echet space topology given by the norms 
\begin{equation}\label{eq-norms_A_infty}
||f||_p:=\sup\{|f^{(j)}(z)|:z\in\D,0\leq j\leq p\},\quad p\in\N_0 
\end{equation}
and it is isomorphic to the space $s$ (cf. \cite[Section 2]{{TayWill70}}).
Moreover, $A^\infty(\D)$ becomes a unital commutative Fr\'echet ${}^*$-algebra when endowed with the usual multiplication of functions and involution $f^*(z):=\overline{f(\overline{z})}$.
The algebra $A^\infty(\D)$ is thus a (dense) ${}^*$-subalgebra of the disc algebra.

\begin{proposition}\label{smooth-disc-algebra}
The algebra $A^\infty(\D)$ is not isomorphic to any closed ${}^*$-subalgebra of $\lstars$. 
\end{proposition}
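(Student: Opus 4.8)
The plan is to invoke Theorem~\ref{th_complemented_subalg_lstars}. Since $A^\infty(\D)$ is a unital commutative Fr\'echet ${}^*$-algebra isomorphic as a Fr\'echet space to $s$, and $s$ is a nuclear power series space of infinite type, the equivalence (ii)$\Leftrightarrow$(iii) of that theorem applies: $A^\infty(\D)$ is isomorphic to a closed ${}^*$-subalgebra of $\lstars$ if and only if it is a DN-algebra. Thus it suffices to prove that $A^\infty(\D)$ is \emph{not} a DN-algebra, i.e.\ that there is no dominating Hilbert norm $||\cdot||=\sqrt{(\cdot,\cdot)}$ satisfying condition~($\alpha$) of Definition~\ref{def-Hilbert-algebra}. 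I will argue by contradiction, assuming such a norm exists.

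The decisive algebraic observation is that the coordinate function is self-adjoint for the involution $f^*(w)=\overline{f(\bar w)}$: indeed $z^*=z$, and for a polynomial $p=\sum a_k z^k$ one has $p^*=\sum\overline{a_k}z^k$. I would then set $\varphi(f):=(f,\mathbf 1)$; using condition~($\alpha$) and unitality one checks $\varphi(g^*f)=(f,g)$, so $\varphi$ is a positive, faithful, $||\cdot||$-continuous functional with $||f||^2=\varphi(f^*f)$. The moments $c_k:=\varphi(z^k)$ are real (as $z^k$ is self-adjoint and $\varphi(a^*)=\overline{\varphi(a)}$ for a positive functional with unit), and the Gram matrix of $(z^k)_{k\geq0}$ is the Hankel matrix $(c_{k+l})_{k,l\geq0}$, which is positive definite. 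By the Hamburger moment theorem there is a finite positive measure $\mu$ on $\R$ with $c_k=\int_\R t^k\,d\mu(t)$, whence $(p,q)=\int_\R p\,\overline{q}\,d\mu$ for all polynomials $p,q$. Since $||\cdot||$ is a continuous norm on $A^\infty(\D)$, one has $||z^k||^2=c_{2k}\leq C\,k^{2r_0}$ for some $r_0$, which forces $\operatorname{supp}\mu\subseteq[-1,1]$ (mass outside would make the moments grow exponentially). Finally, polynomials are dense in $A^\infty(\D)$ (approximate $f$ by its dilates $f(s\,\cdot)$, $s\uparrow1$, and these by Taylor sections), so by continuity of $\varphi$ and of $f\mapsto\int_{-1}^1 f\,d\mu$ one obtains $\varphi(f)=\int_{-1}^1 f(t)\,d\mu(t)$ for \emph{every} $f\in A^\infty(\D)$.

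The contradiction then comes from a test function bounded on the real diameter but large on the disc. Take $f_\lambda(z):=e^{-\lambda z^2}$ with $\lambda>0$; it lies in $A^\infty(\D)$ and is self-adjoint, so $||f_\lambda||^2=\varphi(f_\lambda^2)=\int_{-1}^1 e^{-2\lambda t^2}\,d\mu(t)\leq\mu([-1,1])$, a bound independent of $\lambda$. On the other hand $|e^{-\lambda z^2}|=e^{-\lambda\operatorname{Re}z^2}$ attains $e^{\lambda}$ at $z=\pm i$, so $||f_\lambda||_0=e^{\lambda}$, while differentiation gives $||f_\lambda||_r\leq Q_r(\lambda)\,e^{\lambda}$ for a polynomial $Q_r$. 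The dominating-norm inequality of Definition~\ref{def-DN-Omega}(1), applied with $q=0$, then reads
\[
e^{2\lambda}=||f_\lambda||_0^2\leq C\,||f_\lambda||\,||f_\lambda||_r\leq C\sqrt{\mu([-1,1])}\,Q_r(\lambda)\,e^{\lambda},
\]
i.e.\ $e^{\lambda}\leq C'Q_r(\lambda)$ for all $\lambda>0$, which is impossible. Hence no such norm exists, $A^\infty(\D)$ is not a DN-algebra, and the proposition follows.

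The main obstacle is the passage from the polynomial identity $(p,q)=\int p\,\overline q\,d\mu$ to the integral representation of $\varphi$ on all of $A^\infty(\D)$: this requires both the containment $\operatorname{supp}\mu\subseteq[-1,1]$ (so that $f|_{[-1,1]}$ makes sense and controls $\varphi(f)$) and the density of polynomials, coupled with the continuity of $||\cdot||$. The conceptual core, however, is the tension that $f_\lambda$ exploits: self-adjointness of $z$ localises the inner product to the real diameter $[-1,1]$, where $|e^{-\lambda z^2}|\leq1$, whereas the Fr\'echet topology of $A^\infty(\D)$ sees the whole closed disc, on which $e^{-\lambda z^2}$ grows like $e^{\lambda}$ near $\pm i$ — a growth that a dominating norm cannot absorb.
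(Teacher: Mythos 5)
Your proof is correct, and its overall architecture coincides with the paper's: both reduce the claim via Theorem~\ref{th_complemented_subalg_lstars} to the non-existence of a dominating Hilbert norm satisfying condition ($\alpha$), both convert such a hypothetical norm into integration against a positive measure on the real diameter $[-1,1]$, and both derive the contradiction from holomorphic functions that are bounded on $[-1,1]$ but exponentially large near $\pm i$ (the paper uses $f_n(z)=(z^2-1)^n$ where you use $e^{-\lambda z^2}$ --- the same tension, exploited in essentially the same way). The one genuinely different ingredient is how the integral representation is obtained. The paper invokes Pavone's description \cite{Pav89} of positive functionals on the disc algebra and asserts that the same proof carries over to $A^\infty(\D)$; you instead derive the representation self-containedly: hermitian symmetry of $\varphi$ gives real moments $c_k=\varphi(z^k)$, positivity of the Hankel matrix together with Hamburger's theorem yields a finite positive measure on $\R$, continuity of $||\cdot||$ forces the polynomial bound $c_{2k}=||z^k||^2=O(k^{2r_0})$ and hence $\operatorname{supp}\mu\subseteq[-1,1]$, and density of polynomials (dilation followed by Taylor sections) extends the identity $\varphi(f)=\int_{-1}^1 f\,\de\mu$ to all of $A^\infty(\D)$. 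This buys a fully self-contained argument that makes transparent exactly where the localisation to $[-1,1]$ comes from (self-adjointness of $z$ plus polynomial growth of $||z^k||$), at the cost of being longer than a citation; it also lets you bypass the paper's intermediate step of first proving that $||\cdot||_{[-1,1]}$ is not a dominating norm, since your test family feeds directly into the (DN) inequality with $q=0$. All the individual steps --- the Cauchy--Schwarz continuity of $\varphi$, the identity $(f,g)=\varphi(g^*f)$ from ($\alpha$) and unitality, the support argument, and the derivative bound $||e^{-\lambda z^2}||_r\leq Q_r(\lambda)e^{\lambda}$ --- check out.
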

\begin{proof}
By Theorem \ref{th_complemented_subalg_lstars}, it is enough to show that there is no DN-norm on $A^\infty(\D)$. 

Let $(||\cdot||_p)_{p\in\N}$ be the fundamental sequence of norms on the space $A^\infty(\D)$ defined by (\ref{eq-norms_A_infty}). First, we will show that the norm 
$||\cdot||_{[-1,1]}$, $||f||_{[-1,1]}:=\sup_{x\in[-1,1]}|f(x)|$, is not a dominating norm on $A^\infty(\D)$. Let $f_n(z):=(z^2-1)^n$ for $n\in\N$ and $z\in\D$.
Let us fix  $n\in\N$ and $0\leq k\leq n$. Then, clearly, $||f_n||_{[-1,1]}=1$  and $||f_n||_0=2^n$. Moreover,
by the Leibniz rule, we obtain
\begin{align*}
|f_n^{(k)}(z)|&=\bigg|\sum_{j=0}^k\binom{k}{j}\bigg(\frac{\mathrm{d}}{\mathrm{d}z}\bigg)^j(z-1)^n\bigg(\frac{\mathrm{d}}{\mathrm{d}z}\bigg)^{k-j}(z+1)^n\bigg|\\
&=\bigg|\sum_{j=0}^k\binom{k}{j}\cdot n(n-1)\ldots(n-j+1)(z-1)^{n-j}\cdot n(n-1)\ldots(n-k+j+1)(z+1)^{n-k+j}\bigg|\\
&\leq n^k2^k\cdot \sup_{z\in\D,0\leq j\leq k}|z-1|^{n-j}|z+1|^{n-k+j}\\
&=n^k2^{\frac{k}{2}}2^n\cdot\sup_{0\leq\theta\leq\frac{\pi}{2},0\leq j\leq k}(1-\cos{\theta})^{\frac{n-j}{2}}(1+\cos{\theta})^{\frac{n-k+j}{2}}
\leq n^k2^k2^n
\end{align*}
for all $z\in\D$, and thus
\[||f_n||_p\leq n^p2^p2^n\]
for every $p\in\N_0$.
Consequently, for every $p\in\N$ and every $C>0$ there is $n\in\N$ such that
\[||f_n||_0^2=4^n>Cn^p2^p2^n\geq C||f_n||_{[-1,1]}\;||f_n||_p,\]
and therefore $||\cdot||_{[-1,1]}$ is not a dominating norm on $A^\infty(\D)$. 

Now, let $||\cdot||=\sqrt{(\cdot,\cdot)}$ be an abitrary continuous norm on $A^\infty(\D)$ satisying condition ($\alpha$) in Definition \ref{def-Hilbert-algebra}. 
We define a continuous linear functional $\Phi$ on $A^\infty(\D)$ by $\Phi(f):=(f,\mathbf{1})$,
where $\mathbf{1}$ is the identically one function. Then, by condition ($\alpha$),
\[\Phi(f^*f)=(f^*f,\mathbf{1})=||f||^2\geq0\]
for every $f\in A^\infty(\D)$, so $\Phi$ is a positive functional. In \cite{Pav89}, there is an elementary proof of the fact that each positive linear functional on the disc algebra 
$A(\D)$ has some simple integral representation. It appears that the same proof works in the case of the algebra $A^\infty(\D)$, and thus there is a positive Borel measure $\mu$ on $[-1,1]$
such that
\[\Phi(f)=\int_{-1}^1f(x)\mathrm{d}\mu(x)\]
for every $f\in A^\infty(\D)$. Hence,
\[||f||^2=\int_{-1}^1|f(x)|^2\mathrm{d}\mu(x).\]
If $||\cdot||$ was a dominating norm on $A^\infty(\D)$ then, since $||\cdot||\leq\mu([-1,1])||\cdot||_{[-1,1]}$, the norm $||f||_{[-1,1]}$ would be a dominating norm as well, contrary to our 
first claim. Hence, $A^\infty(\D)$ is not a DN-algebra, which completes the proof.
\end{proof}

\begin{Acknow*}
The author wishes to express his thanks to Leonhard Frerick for many stimulating conversations, especially during the stay at Trier University in June 2017. 
The author is also very endebted to his colleague Krzysztof Piszczek for valuables comments during the manuscript preparation. 
\end{Acknow*}

{\small

}

\vspace{1cm}
\begin{minipage}{8.5cm}
Tomasz Cia\'s

Faculty of Mathematics and Computer Science

Adam Mickiewicz University in Pozna{\'n}

Umultowska 87

61-614 Pozna{\'n}, POLAND

e-mail: \url{tcias@amu.edu.pl}
\end{minipage}\

\end{document}